\newcolumntype{P}[1]{>{\centering\arraybackslash}p{#1}}
\newtheorem{theorem}{Theorem}[section]
\newtheorem{lemma}[theorem]{Lemma}
\newtheorem{proposition}[theorem]{Proposition}
\theoremstyle{definition}
\newtheorem{remark}[theorem]{Remark}
\newtheorem{definition}[theorem]{Definition}
\numberwithin{equation}{section}
\newcommand{\Bl}{\mathrm{Bl}}
\title{An explicit wall crossing for the moduli space of hyperplane arrangements}
\author{Patricio Gallardo}
\address{Department of Mathematics, University of California, Riverside, CA 92521, USA }
\email{pgallard@ucr.edu}
\author{Luca Schaffler}
\address{Dipartimento di Matematica e Fisica, Universit\`a degli Studi Roma Tre, Largo San Leonardo Murialdo 1, 00146, Roma, Italy}
\email{luca.schaffler@uniroma3.it}
\subjclass[2020]{14J10, 14D06, 52C35}
\keywords{moduli space, compactification, stable pair, wall crossing, hyperplane arrangement, Mori dream space}
\begin{document}

\begin{abstract}
The moduli space of hyperplanes in projective space has a family of geometric and modular compactifications that parametrize stable hyperplane arrangements with respect to a weight vector. Among these, there is a toric compactification that generalizes the Losev--Manin moduli space of points on the line. We study the first natural wall crossing that modifies this compactification into a non-toric one by varying the weights. As an application of our work, we show that any $\mathbb{Q}$-factorialization of the blow up at the identity of the torus of the generalized Losev--Manin space is not a Mori dream space for a sufficiently high number of hyperplanes. Additionally, for lines in the plane, we provide a precise description of the wall crossing.
\end{abstract}

\maketitle


\section{Introduction}
Understanding the global geometry of the KSBA moduli spaces for higher-dimensional pairs is a notoriously difficult problem. A notable case that has recently been understood \cite{AAB24} involves those associated with stable log Calabi--Yau surfaces, where every irreducible component of the coarse moduli space is a finite quotient of a toric variety. Furthermore, the fan of this toric variety is constructed via mirror symmetry. Other examples include the foundational work on stable toric pairs and abelian varieties \cite{Ale02}, and other explicit examples for moduli of surfaces such as \cite{MS21}, \cite{DH21}, \cite{DeV22}, \cite{AET23}, among others. Given an explicit description of a KSBA moduli space, a natural question arises: how does the space change when the parametrized pairs become more positive by adjusting their weights? According to general theory \cite{ABIP23}, a wall-crossing phenomenon exists, and the moduli space is modified. Yet, little is known about the explicit geometry of such modifications, and examples for moduli of higher dimensional varieties as in \cite{Sch23} are scarce.

Our work is a step toward addressing this question by starting with the moduli of weighted stable hyperplane arrangements $\overline{\mathrm{M}}_{\mathbf{b}}(\mathbb{P}^d,n)$ with respect to a weight vector $\mathbf{b}$, see \cite{Ale08,Ale15} and \cite{HKT06} for the case $\mathbf{b}=(1,\ldots,1)$. There is a specific choice of weight vector $\mathbf{t}$ (see Definition~\ref{def:weight-vector-t}) such that the main component of the KSBA moduli space is the toric variety associated with a fiber polytope. This space is known as the (generalized) Losev--Manin moduli space of hyperplane arrangements and we denote it by $\overline{\mathrm{LM}}(\mathbb{P}^d,n)$. By changing the weights of the hyperplanes to $\mathbf{nt}$ as in Definition~\ref{def:weight_nt}, we make the pairs more positive, and the resulting KSBA moduli space \( \overline{\mathrm{M}}_{\mathbf{nt}}(\mathbb{P}^d,n)\) is a non-toric modification of $\overline{\mathrm{LM}}(\mathbb{P}^d,n)$.

\begin{theorem}[Theorem~\ref{thm:locally-at-e-is-simple-blow-up} and Theorem~\ref{thm:geometric-meaning-blow-up-identity-dim2}]
\label{thm:mainA}
There exists a surjective birational morphism
\[
\overline{\mathrm{M}}_{\mathbf{nt}}(\mathbb{P}^d,n)\rightarrow\overline{\mathrm{LM}}(\mathbb{P}^d,n),
\]
which, in a neighborhood of the identity $e$ of the dense open subtorus of $\overline{\mathrm{LM}}(\mathbb{P}^d,n)$, corresponds to the simple blow up of $e$, up to normalization. In particular, for $d=2$, we have an isomorphism
\[
\overline{\mathrm{M}}_{\mathbf{nt}}(\mathbb{P}^2,n)^\nu\cong\Bl_e\overline{\mathrm{LM}}(\mathbb{P}^2,n),
\]
where for a reduced scheme $X$ we let $X^\nu$ be its normalization.
\end{theorem}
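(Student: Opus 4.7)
The plan combines general KSBA wall-crossing theory, which produces the morphism and identifies where it must perform a nontrivial modification, with an explicit local computation at $e$, and finally a global argument in dimension $d=2$. Since $\mathbf{nt}\geq\mathbf{t}$ componentwise, the general wall-crossing formalism for KSBA moduli of weighted stable pairs (see \cite{ABIP23}) produces a functorial reduction morphism
\[
\pi\colon \overline{\mathrm{M}}_{\mathbf{nt}}(\mathbb{P}^d,n)\to \overline{\mathrm{M}}_{\mathbf{t}}(\mathbb{P}^d,n)=\overline{\mathrm{LM}}(\mathbb{P}^d,n),
\]
which is an isomorphism on the $\mathbf{nt}$-stable locus, hence surjective and birational. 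By the toric/fiber-polytope description of $\overline{\mathrm{LM}}$, the identity $e$ of the dense torus corresponds to the $\mathbf{t}$-stable arrangement in which all $n$ light hyperplanes coincide with a single hyperplane of $\mathbb{P}^d$; this configuration is forbidden by the stricter $\mathbf{nt}$-stability threshold, so $e$ is a point of the open orbit where $\pi$ performs a nontrivial modification.

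For the local identification with the blow-up, I would first classify the $\mathbf{nt}$-stable replacements of the limit over $e$: the colliding hyperplanes must be separated onto a new irreducible component attached via a degeneration, and the moduli of such separations is naturally parametrized by a projective space of dimension $d-1$, giving the fiber $\pi^{-1}(e)$. On an explicit torus-invariant affine chart around $e$, the universal family then allows one to show that $\pi^{-1}\mathfrak{m}_e$ becomes invertible after passing to the normalization. The universal property of the blow-up yields a factorization $\overline{\mathrm{M}}_{\mathbf{nt}}^\nu\to\Bl_e\overline{\mathrm{LM}}$ near $e$, which is an isomorphism by comparing exceptional fibers ($\mathbb{P}^{d-1}$ on both sides) and using that $\pi$ is already an isomorphism off $e$ in the chart. \emph{This is the principal technical step and the main obstacle:} it requires a precise description of the universal family over the chart, and the normalization is needed to absorb the possibly non-reduced infinitesimal structure of the KSBA moduli along the exceptional locus, which is why the identification with $\Bl_e$ holds only up to normalization.

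Finally, for the globalization in $d=2$, I would verify that $\pi$ is an isomorphism on all of $\overline{\mathrm{LM}}(\mathbb{P}^2,n)\setminus\{e\}$, not merely in the open orbit; this amounts to checking at each boundary stratum that the corresponding $\mathbf{t}$-stable pair is already $\mathbf{nt}$-stable, since the chain-of-$\mathbb{P}^2$s structure at the boundary should already have absorbed the stricter stability condition. Both $\overline{\mathrm{M}}_{\mathbf{nt}}(\mathbb{P}^2,n)^\nu$ and $\Bl_e\overline{\mathrm{LM}}(\mathbb{P}^2,n)$ are then normal surfaces, $\pi^\nu$ is an isomorphism off $e$ and coincides with the blow-up in a neighborhood of $e$ by the previous step, so Zariski's Main Theorem promotes the globally defined birational bijection of normal surfaces to the asserted isomorphism.
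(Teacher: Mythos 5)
Your outline tracks the paper's architecture (a morphism from wall crossing, a local identification at $e$, a boundary analysis for $d=2$), but each step as written rests on a false simplification. The most serious is the first: $\mathbf{nt}$ is \emph{not} $\geq\mathbf{t}$ componentwise. The first $d+1$ weights drop from $1$ to $1-\epsilon$ while the last $n-d-1$ rise from $\epsilon$ to $\tfrac{1+\epsilon}{n-d-1}$, so the two vectors are incomparable and no reduction morphism comes for free from monotonicity. The paper must first prove (Lemma~\ref{lemma:Wall_t_nt}) that exactly one wall, $x_{d+2}+\cdots+x_n=1$, separates $\mathbf{t}$ from $\mathbf{nt}$, and then interpolate auxiliary weights $\mathbf{a},\widehat{\mathbf{w}},\mathbf{h}$ chosen so that the inequality hypothesis of Theorem~\ref{thm:wall_crossing_valery}~(3) holds on each sub-segment, yielding $\overline{\mathrm{M}}_{\mathbf{nt}}=\overline{\mathrm{M}}_{\mathbf{h}}\rightarrow\overline{\mathrm{M}}_{\mathbf{w}}=\cdots=\overline{\mathrm{LM}}$. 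Second, the fiber over $e$ is not a $\mathbb{P}^{d-1}$: the exceptional divisor of $\Bl_e\overline{\mathrm{LM}}(\mathbb{P}^d,n)$ is $\mathbb{P}^{N-1}$ with $N=d(n-d-2)$, and the stable replacements (Definition~\ref{def:description-of-the-new-fibers}) form a divisor recording how all $n-d-1$ light sections separate on the new component $\Bl_p\mathbb{P}^d$, so your ``comparison of exceptional fibers'' does not get off the ground. Third, you leave the actual local step unproved and propose the reverse strategy to the paper's: rather than showing $\pi^{-1}\mathfrak{m}_e$ becomes invertible on $\overline{\mathrm{M}}_{\mathbf{nt}}^\nu$ (which you do not establish), the paper builds a morphism $\Bl_e\mathrm{LM}(\mathbb{P}^d,n)\rightarrow\mathcal{U}_{\mathbf{nt}}(\mathbb{P}^d,n)$ by an explicit semistable-reduction computation showing that the $\mathbf{nt}$-stable limit of a one-parameter family through $e$ depends only on first-order data, hence only on the corresponding point of $\mathbb{E}_n$; finiteness and Zariski's Main Theorem then give the isomorphism with the normalization.

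For $d=2$, your claim that every boundary $\mathbf{t}$-stable pair ``is already $\mathbf{nt}$-stable'' is false as stated: for certain degenerations containing a $\mathbb{P}^1\times\mathbb{P}^1$ component the divisor $K_X+\mathbf{nt}C$ fails to be $\mathbb{Q}$-Cartier (Lemma~\ref{lem:Q-Cartier-modification}), and one must first pass to the modification $(\widetilde{X},\mathbf{nt}\widetilde{C})$ of Definition~\ref{def:Q-Cartier-modification} before verifying ampleness and log canonicity (Lemmas~\ref{lem:ampleness} and~\ref{lem:lc-2-dim}); the log canonicity check is itself nontrivial and uses the matroid-tiling description to rule out two numerically non-log-canonical configurations. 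Finally, these spaces are not ``normal surfaces'' --- they have dimension $2(n-4)$ --- though that does not affect the applicability of Zariski's Main Theorem at the end.
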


We prove the above theorem in \S\,\ref{sec:wall-crossing} and \S\,\ref{sec:iso-in-case-of-lines}. The main idea is to study the stability of the pairs parametrized by $\overline{\mathrm{LM}}(\mathbb{P}^d,n)$ when we change the weight vector from $\mathbf{t}$ to $\mathbf{nt}$. Then, we explicitly calculate the stable replacements whenever we obtain unstable pairs. Care must be exercised because the log canonical divisor of certain pairs becomes not $\mathbb{Q}$-Cartier as we change the weights.
We conjecture that the analogous isomorphism holds for $d\geq3$. However, in this higher dimensional case, many more subtleties appear. For instance, the irreducible components of the degenerations parametrized by $\overline{\mathrm{LM}}(\mathbb{P}^d,n)$ may have non $\mathbb{Q}$-factorial singularities. This is because a mixed subdivision of a scaled simplex $\Delta_d$ for $d\geq3$ may involve singular polytopes. An example can be found in \cite[Figure~5]{CHSW11}. In any case, we expect $\overline{\mathrm{M}}_{\mathbf{nt}}(\mathbb{P}^d,n)^\nu$ to be isomorphic to $\overline{\mathrm{LM}}(\mathbb{P}^d,n)$ for any $d\geq3$. This is object of current investigation.

Next, we describe an application of the Theorem~\ref{thm:mainA}. A central question in algebraic geometry is to understand the birational geometry of varieties. 
We recall that a variety is a \emph{Mori dream space} if its birational geometry resembles that of a toric variety, see \S\,\ref{sec:MDS}.
It is difficult to determine if a variety is a Mori dream space, and the question is, in general, unanswered even for the blow up at the identity $e$ of the dense open subtorus of a projective smooth toric variety, see \cite{GK16}, \cite{HKL18}, \cite{He19}, \cite{GGK21}, \cite{GGR22}, and \cite{Cas18}. In our second result, we show the failure of the Mori dream property for the blow up at $e$ of $\overline{\mathrm{LM}}(\mathbb{P}^d,n)$. This conclusion stems directly from combining Theorem~\ref{thm:mainA}, standard tools within moduli theory, and the analogous failure for the Losev--Manin moduli space $\overline{\mathrm{LM}}(\mathbb{P}^1,n)$, as seen in \cite{CT15}, and subsequently in \cite{GK16} and \cite{HKL18}.

\begin{theorem}[Theorem~\ref{thm:Q-fact-not-MDS}]
Let $n\geq d+9$. Then any $\mathbb{Q}$-factorialization 
of $\Bl_e\overline{\mathrm{LM}}(\mathbb{P}^d,n)$ is not a Mori dream space.    
\end{theorem}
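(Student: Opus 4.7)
My plan is to combine Theorem~\ref{thm:mainA} with a moduli-theoretic restriction morphism and Okawa's theorem on images of Mori dream spaces, thereby reducing the statement to the known failure of the Mori dream property for $\Bl_e\overline{\mathrm{LM}}(\mathbb{P}^1,m)$ when $m\geq 10$, established in \cite{CT15}, \cite{GK16}, and \cite{HKL18}. Since any two $\mathbb{Q}$-factorializations of a normal projective variety differ by a sequence of small modifications and hence share a Cox ring, the Mori dream property is independent of the chosen $\mathbb{Q}$-factorialization. It is therefore enough to produce one $\mathbb{Q}$-factorialization $X$ of $\Bl_e\overline{\mathrm{LM}}(\mathbb{P}^d,n)$ together with a surjective morphism $X\to Y$ onto some non-MDS $Y$, and conclude by the contrapositive of Okawa's theorem.

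For the source, Theorem~\ref{thm:mainA} provides a birational morphism $\overline{\mathrm{M}}_{\mathbf{nt}}(\mathbb{P}^d,n)^\nu\to\overline{\mathrm{LM}}(\mathbb{P}^d,n)$ that is the simple blow up of $e$ in a neighborhood of $e$, and I take $X$ to be a small $\mathbb{Q}$-factorial projective modification of $\Bl_e\overline{\mathrm{LM}}(\mathbb{P}^d,n)$ (equal to $\overline{\mathrm{M}}_{\mathbf{nt}}(\mathbb{P}^2,n)^\nu$ itself in the case $d=2$ by Theorem~\ref{thm:mainA}). For the target, I take $Y=\Bl_e\overline{\mathrm{LM}}(\mathbb{P}^1,n-d+1)$, which is smooth since the classical Losev--Manin space of $n-d+1$ points on $\mathbb{P}^1$ is a smooth toric variety. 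The central step is to construct a surjective morphism
$$
\Phi:\overline{\mathrm{M}}_{\mathbf{nt}}(\mathbb{P}^d,n)^\nu\to\overline{\mathrm{LM}}(\mathbb{P}^1,n-d+1)
$$
via a restriction-to-a-pencil procedure: given a stable weighted arrangement of $n$ hyperplanes in a degeneration of $\mathbb{P}^d$, fix $d-1$ of them, let $L\cong\mathbb{P}^1$ be their common intersection, and record the $n-d+1$ points traced on $L$ by the remaining hyperplanes. One verifies that this assigns a stable Losev--Manin pair to the restricted data, that it extends from the open stratum to the entire compactification, and that the identity point of the source maps to the identity point of the target; then $\Phi$ lifts canonically to a morphism $\widetilde\Phi:\overline{\mathrm{M}}_{\mathbf{nt}}(\mathbb{P}^d,n)^\nu\to Y$.

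Composing $\widetilde\Phi$ with the birational map $X\to\Bl_e\overline{\mathrm{LM}}(\mathbb{P}^d,n)$ and the neighborhood-of-$e$ identification from Theorem~\ref{thm:mainA} yields a surjective morphism $X\to Y$: it is defined off a codimension-two subset of the normal variety $X$ and extends uniquely to the projective target. When $n\geq d+9$ we have $n-d+1\geq 10$, so $Y$ is not a Mori dream space by the cited works, and Okawa's theorem then precludes $X$ from being one as well. The main obstacle I foresee is the construction of $\Phi$: on the generic stratum the restriction is clearly a well-defined dominant map, but extending it across the boundary — where the degenerate arrangements sit on reducible ambient schemes that, for $d\geq 3$, may acquire non-$\mathbb{Q}$-factorial singularities as flagged in the discussion following Theorem~\ref{thm:mainA} — requires a careful moduli-theoretic analysis to verify that the restricted pair is stable with the correct induced Losev--Manin weights on $L$, and that a choice of which $d-1$ hyperplanes to restrict along can be made so that the resulting morphism is defined everywhere.
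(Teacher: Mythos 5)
Your overall strategy coincides with the paper's: restrict the universal arrangement to the line cut out by $d-1$ of the unit-weight hyperplanes to obtain a morphism to $\overline{\mathrm{LM}}(\mathbb{P}^1,n-d+1)$, lift it to the blow ups at $e$, and conclude via Okawa's theorem together with the failure of the Mori dream property for $\Bl_e\overline{\mathrm{LM}}(\mathbb{P}^1,m)$ when $m\geq10$ (the numerology $n\geq d+9\iff n-d+1\geq 10$ matches). The reduction to a single $\mathbb{Q}$-factorialization via shared Cox rings is also as in the paper. However, two of your steps fail exactly where the technical content lies.

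First, your extension argument is invalid: a rational map from a normal variety to a projective variety that is defined away from a codimension-two subset need \emph{not} extend to a morphism (the inverse of the blow up of a point on a smooth surface is the standard counterexample); codimension-two extension holds for sections of line bundles or maps to abelian varieties, not to arbitrary projective targets. Moreover, routing the construction through $\overline{\mathrm{M}}_{\mathbf{nt}}(\mathbb{P}^d,n)^\nu$ is only justified near $e$: Theorem~\ref{thm:locally-at-e-is-simple-blow-up} identifies $\mathcal{U}_{\mathbf{nt}}(\mathbb{P}^d,n)^\nu$ with $\Bl_e\mathrm{LM}(\mathbb{P}^d,n)$, while the global isomorphism is proved only for $d=2$ and merely conjectured for $d\geq3$, so over the boundary your composite $X\to Y$ is a priori only a rational map. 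The paper avoids this by defining $\overline{\mathrm{LM}}(\mathbb{P}^d,n)\to\overline{\mathrm{LM}}(\mathbb{P}^1,n-d+1)$ on the \emph{entire} Losev--Manin space by adjunction from the universal family of Theorem~\ref{thm:family-over-moduli-weighted-hyperplane-arrangements}, and uses the wall-crossing description only to extend the lifted rational map across $\mathbb{E}_n$ by a one-parameter-family argument and \cite[Lemma~3.18]{AET23}. Second, your assertion that $\Phi$ ``lifts canonically'' to $Y=\Bl_e\overline{\mathrm{LM}}(\mathbb{P}^1,n-d+1)$ is precisely the point the paper flags in the introduction: the restriction morphism $\alpha$ satisfies $\alpha^{-1}(e)\supsetneq\{e\}$, so the universal property of the blow up does not furnish the lift. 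The missing ingredient is Lemma~\ref{lem:blow-up-of-Losev-Manin-for-points-in-P1}, which identifies $\Bl_e\overline{\mathrm{LM}}(\mathbb{P}^1,n-d+1)$ with the Hassett space $\overline{\mathrm{M}}_{\mathbf{a}}(\mathbb{P}^1,n-d+1)$ for $\mathbf{a}=\mathbf{nt}(1,n-d+1)$, so that the restricted degenerate family (two copies of $\mathbb{P}^1$ glued at a point, one carrying the two heavy sections and the other the remaining ones) is itself $\mathbf{a}$-stable and determines the image point modularly, independently of the chosen arc. Without these two repairs the proof does not close.
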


We prove the above theorem in \S\,\ref{sec:MDS}.  The main idea is to construct a dominant morphism $\Bl_e\overline{\mathrm{LM}}(\mathbb{P}^d,n)\rightarrow\Bl_e\overline{\mathrm{LM}}(\mathbb{P}^1,n-d+1)$ and then apply \cite{Oka16}. We point out that the existence of such morphism does not immediately follow by simply restricting to hyperplanes because the pairs have weights and $\Bl_e\overline{\mathrm{LM}}(\mathbb{P}^d,n)$ lacks a modular interpretation for $d \geq 3$ at this point. Furthermore, there is a surjective morphism $\alpha\colon\overline{\mathrm{LM}}(\mathbb{P}^d,n)\rightarrow\overline{\mathrm{LM}}(\mathbb{P}^1,n-d+1)$ such that $\alpha(e)=e$, but $\{e\}\subsetneq\alpha^{-1}(e)$. Thus, $\alpha$ does not automatically lift to the blow ups at $e$ by the universal property of the blow up.


\subsection*{Acknowledgements}
The authors would like to thank Valery Alexeev for introducing us to the problem and for helpful feedback on preliminary work. We also thank Angelo Felice Lopez and Filippo Viviani for useful comments, and Xian Wu for pointing out an imprecision in Lemma~3.6~(2) of the first draft of the paper. P. Gallardo is partially supported by the National Science Foundation under Grant No. DMS-2316749.  L. Schaffler is a member of the INdAM group GNSAGA and was supported by the projects ``Programma per Giovani Ricercatori Rita Levi Montalcini'', PRIN2017SSNZAW ``Advances in Moduli Theory and Birational Classification'', PRIN2020KKWT53 ``Curves, Ricci flat Varieties and their Interactions'', and PRIN 2022 ``Moduli Spaces and Birational Geometry'' -- CUP E53D23005790006.

\tableofcontents

\section{Preliminaries on weighted stable hyperplane arrangements}
\label{sec:comp-mod-weighted-shas}

We review the necessary results from \cite{Ale08,Ale15} and set up the notation. Let $X$ be a variety, $B_1,\ldots,B_n\subseteq X$ reduced divisors, and $0<b_1,\ldots,b_n\leq1$ rational numbers. We adopt the following notation:
\[
\mathbf{b}B:=\sum_{i=1}^nb_iB_i.
\]
The pair $(X,\mathbf{b}B)$ is called \emph{stable} provided it has \emph{semi-log canonical singularities} \cite[Definition--Lemma~5.10]{Kol13} and $K_X+\mathbf{b}B$ is ample.

We are interested in a particular type of stable pairs which we describe next.
Let $H_1,\ldots,H_n$ be hyperplanes in $\mathbb{P}^d$ and let $\mathbf{b}=(b_1,\ldots,b_n)$ be a vector of rational numbers satisfying $0<b_i\leq1$ for all $i$. Assume that $\sum_{i=1}^nb_i>d+1$ and that $(\mathbb{P}^d,\mathbf{b}H)$ is log canonical (the latter is satisfied, for instance, when the hyperplanes are linearly general). Then, $(\mathbb{P}^d,\mathbf{b}H)$ is a stable pair, and it is referred to as a \emph{weighted stable hyperplane arrangement}.

The theory of stable pairs and stable toric varieties induces a geometric and modular compactification of the moduli space of weighted stable hyperplane arrangements. 
\begin{theorem}[{\cite[Theorem~1.1]{Ale08}}]
\label{thm:family-over-moduli-weighted-hyperplane-arrangements}
Let $d\geq1, n\geq d+3$, and choose rational weights $\mathbf{b}=(b_1,\ldots,b_n)$ such that $0<b_i\leq1$ and $\sum_{i=1}^nb_i>d+1$. Then there exists a projective scheme 
$\overline{\mathrm{M}}_{\mathbf{b}}(\mathbb{P}^d,n)$ together with a flat and projective family $f\colon\left(\mathcal{X},\mathcal{B}_1,\ldots,\mathcal{B}_n\right)\rightarrow\overline{\mathrm{M}}_{\mathbf{b}}(\mathbb{P}^d,n)$ such that the following hold:
\begin{enumerate}

\item Every geometric fiber of $f$ is a $d$-dimensional variety $X$ together with Weil divisors $B_1,\ldots,B_n$ such that the pair $(X,\mathbf{b}B)$ is stable.

\item For distinct geometric points of $\overline{\mathrm{M}}_{\mathbf{b}}(\mathbb{P}^d,n)$ the fibers are non-isomorphic.

\item Over an open, but not necessarily dense, subset $\mathrm{M}_{\mathbf{b}}(\mathbb{P}^d,n)\subseteq\overline{\mathrm{M}}_{\mathbf{b}}(\mathbb{P}^d,n)$, the family $f$ coincides with the universal family of weighted stable arrangements of $n$ hyperplanes in projective space $\mathbb{P}^d$.

\end{enumerate}
For every positive integer $m$ such that all $mb_i$ are integers, the sheaf $\mathcal{O}_{\mathcal{X}}(m(K_{\mathcal{X}}+\sum_{i=1}^nb_i\mathcal{B}_i))$ is relatively ample and free over $\overline{\mathrm{M}}_{\mathbf{b}}(\mathbb{P}^d,n)$.
\end{theorem}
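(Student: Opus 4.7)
The plan is to specialize the general machinery of KSBA moduli of stable pairs to the setting of weighted hyperplane arrangements, and then verify the specific structural statements (1)--(3) together with the relative ampleness claim. First I would define the moduli functor $\mathcal{M}_{\mathbf{b}}$ on reduced schemes $S$ sending $S$ to the set of flat projective families $(\mathcal{X},\mathcal{B}_1,\ldots,\mathcal{B}_n)\to S$ whose geometric fibers are stable pairs $(X,\mathbf{b}B)$ of relative dimension $d$ with numerical invariants matching those of $(\mathbb{P}^d,\mathbf{b}H)$ for a linearly general arrangement $H$. The divisors $\mathcal{B}_i$ are required to be flat families of Weil divisors with $K_{\mathcal{X}/S}+\sum_{i=1}^n b_i \mathcal{B}_i$ relatively $\mathbb{Q}$-Cartier, following Koll\'ar's conventions for families of stable pairs.

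Next I would verify representability by a projective scheme, which splits into four standard pieces. Boundedness holds because the log canonical volume $(K_{\mathbb{P}^d}+\mathbf{b}H)^d=(\sum_i b_i-d-1)^d$ is determined by the data, so Koll\'ar's boundedness for stable pairs of fixed volume applies. Local closedness and separatedness follow from the uniqueness of the stable log canonical model. Properness reduces to the stable reduction statement: given a one-parameter family of weighted stable hyperplane arrangements over a punctured DVR, after a finite base change there is a unique stable limit $(\mathcal{X}_0,\mathbf{b}\mathcal{B}_0)$. In the hyperplane setting, Alexeev's theory of stable toric pairs provides an explicit model for such limits in terms of mixed subdivisions of the scaled simplex $n\Delta_d$, and this combinatorial description is precisely what allows the valuative criterion to be checked.

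Items (1) and (2) are then immediate from the construction, and for (3) one takes $\mathrm{M}_{\mathbf{b}}(\mathbb{P}^d,n)$ to be the preimage of the open substack where the geometric fiber satisfies $X\cong\mathbb{P}^d$ and the $\mathcal{B}_i$ are distinct hyperplanes. The final relative ampleness and freeness statement for $\mathcal{O}_{\mathcal{X}}(m(K_{\mathcal{X}/S}+\sum_i b_i \mathcal{B}_i))$ with $m$ sufficiently divisible is built into the moduli problem: it is the log canonical polarization of the fibers, which is ample by the stability hypothesis and, once $m$ clears denominators, is Cartier and base point free along the fibers of a KSBA family by the standard base point free theorem applied fiberwise and relative Castelnuovo--Mumford regularity.

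The main obstacle is the properness step. Its difficulty lies in controlling the geometry of the degeneration: the limit $\mathcal{X}_0$ is generically reducible, and $K+\mathbf{b}B$ can fail to be $\mathbb{Q}$-Cartier unless the combinatorics of the chosen mixed subdivision of $n\Delta_d$ are compatible with $\mathbf{b}$. Concretely, one must identify the correct mixed subdivision from a degenerating family (for instance via a Mustafin-type construction on an associated pencil), check that the resulting pair is semi-log canonical with ample log canonical class, and verify that no other subdivision yields an isomorphic limit, so that the stable replacement in $\overline{\mathrm{M}}_{\mathbf{b}}(\mathbb{P}^d,n)$ is unambiguous.
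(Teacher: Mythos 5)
This statement is not proved in the paper at all: it is quoted verbatim from \cite[Theorem~1.1]{Ale08}, so there is no internal proof to compare your sketch against. Your proposal outlines the modern general KSBA strategy (moduli functor of stable pairs, boundedness via fixed volume, local closedness, separatedness, properness via stable reduction), whereas Alexeev's actual construction is much more hands-on: he realizes a hyperplane arrangement as a point of the Grassmannian $\mathrm{Gr}(d+1,n)$, uses the Gelfand--MacPherson correspondence to translate the problem into one about stable toric (semiabelic) pairs over the Grassmannian, and builds $\overline{\mathrm{M}}_{\mathbf{b}}(\mathbb{P}^d,n)$ together with its family from the combinatorics of matroid decompositions of the weighted hypersimplex and mixed subdivisions of the scaled simplex. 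That explicit route is what delivers boundedness, properness, and --- crucially --- the existence of the family $f$ essentially for free.

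Two points in your sketch are genuine gaps rather than routine omissions. First, general KSBA machinery produces a proper Deligne--Mumford stack and its coarse space; it does not by itself give a flat projective family over the \emph{scheme} $\overline{\mathrm{M}}_{\mathbf{b}}(\mathbb{P}^d,n)$ with pairwise non-isomorphic fibers as in items (1)--(2). For that you must argue that the parametrized stable pairs have trivial automorphism groups (or otherwise descend the universal family), and your proposal never addresses this. Second, the boundedness input you invoke (fixed log canonical volume in arbitrary dimension $d$ with fractional coefficients) is a deep theorem that postdates \cite{Ale08} and is far heavier than what the toric description requires; relatedly, you would need to specify carefully what a ``flat family of Weil divisors'' $\mathcal{B}_i$ means over a non-normal total space so that the functor is actually well defined. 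Your final paragraph correctly identifies properness and the mixed-subdivision combinatorics as the crux, which is consistent with Alexeev's approach, but as written the proposal substitutes citations to general theory for the specific constructions that make the theorem true in the stated form.
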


For the rest of the paper, we abuse our notation and write $\overline{\mathrm{M}}_{\mathbf{b}}(\mathbb{P}^d,n)$ to denote the irreducible component containing $\mathrm{M}_{\mathbf{b}}(\mathbb{P}^d,n)$ as a dense open subset. We pinpoint some special cases of the above construction. The moduli space $\overline{\mathrm{M}}_{\mathbf{b}}(\mathbb{P}^1,n)$ equals Hassett's moduli space $\overline{\mathrm{M}}_{0,\mathbf{b}}$ of $\mathbf{b}$-weighted stable rational curves in \cite{Has03}. In particular, if $\mathbf{b} = (1,1, \epsilon, \ldots, \epsilon)$, then corresponding Hassett's moduli space recovers the Losev--Manin moduli space \cite{LM00} which parametrizes marked chains of $\mathbb{P}^1$'s. If $\mathbf{1}:=(1,\ldots,1)$, then $\overline{\mathrm{M}}_\mathbf{1}(\mathbb{P}^d,n)$ recovers Kapranov's Chow quotient compactification of the moduli space of $n$ linearly general hyperplanes in $\mathbb{P}^d$ \cite{Kap93}, which was shown to be a moduli space for stable pairs in \cite{HKT06}. In the current paper, the following choice of weights will be of particular interest.

\begin{definition}
\label{def:weight-vector-t}
Let us define the weight vector
\[
\mathbf{t}=\mathbf{t}(d,n):=(\underbrace{1,\ldots,1}_{(d+1)\textrm{-times}},\epsilon,\ldots,\epsilon)\in\mathbb{Q}^n,
\]
where $0<\epsilon\ll1$. In analogy with the case of points in $\mathbb{P}^1$, we refer to $\overline{\mathrm{LM}}(\mathbb{P}^d,n):=\overline{\mathrm{M}}_{\mathbf{t}}(\mathbb{P}^d,n)$ as the \emph{Losev--Manin moduli space for hyperplanes in $\mathbb{P}^d$}. When simply writing $\mathbf{t}$, the values of $d$ and $n$ will be clear from the context.
\end{definition}

A fundamental aspect of such Losev--Manin spaces is that for all $d \geq 1$ and $n \geq d+3$,  $\overline{\mathrm{LM}}(\mathbb{P}^d,n)$ is a toric variety. For $d=1$, this was proved originally in \cite{LM00}.

\begin{theorem}[{\cite[Example~9.6]{Ale08}}]
\label{thm:LM-space-for-lines-is-toric}
Let $\Delta_d$ denote the $d$-dimensional simplex and let $\pi\colon\Delta_d^{n-d-1}\rightarrow(n-d-1)\Delta_d$ be the polytope fibration given by
\begin{align*}
\pi\colon\Delta_d^{n-d-1}&\rightarrow(n-d-1)\Delta_d,\\
(x_1,\ldots,x_{n-d-1})&\mapsto x_1+\ldots+x_{n-d-1}.
\end{align*}
Let $\Sigma(n):=\Sigma(\pi)$ be the corresponding \emph{fiber polytope} (we refer to \cite{BS92} for the definition). Then the toric variety $X_{\Sigma(n)}$ associated to $\Sigma(n)$ is isomorphic to the Losev--Manin moduli space $\overline{\mathrm{LM}}(\mathbb{P}^d,n)$.
\end{theorem}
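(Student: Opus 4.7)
The plan is to identify both $\overline{\mathrm{LM}}(\mathbb{P}^d,n)$ and $X_{\Sigma(n)}$ with the same moduli space of marked stable toric pairs, and then invoke Alexeev's general framework relating such moduli to fiber polytopes.

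First I would examine the open stratum $\mathrm{M}_{\mathbf{t}}(\mathbb{P}^d,n)$. A general pair has $d+1$ hyperplanes of weight $1$ which, using the $\mathrm{PGL}_{d+1}$ action, may be normalized to be the coordinate simplex of $\mathbb{P}^d$, together with $n-d-1$ linearly general hyperplanes of weight $\epsilon$. The residual automorphism group is the diagonal $d$-dimensional torus $T$ preserving this simplex, so the open stratum is a $d(n-d-1)$-dimensional torus quotient of the parameter space of the remaining $n-d-1$ hyperplanes. This matches the dimension of $X_{\Sigma(n)}$ and supplies a natural candidate for the dense open subtorus on both sides.

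Second, I would analyze the boundary degenerations via the theory of stable toric pairs. Because the first $d+1$ hyperplanes have coefficient $1$, the divisor $K_X+\mathbf{t}B$ is essentially the toric boundary, perturbed only by small contributions from the $\epsilon$-weighted hyperplanes. Stable limits $(X,\mathbf{t}B)$ therefore consist of unions of toric varieties, glued along toric strata, such that $H_1+\cdots+H_{d+1}$ restricts to the toric boundary of each component. The $\epsilon$-weighted hyperplanes impose no further positivity, but mark which cell of the degeneration each extra hyperplane belongs to. Combinatorially, this data corresponds to a coherent mixed subdivision of $(n-d-1)\Delta_d$ into Minkowski cells built from the $n-d-1$ copies of $\Delta_d$ — exactly the data classified by the normal fan of $\Sigma(\pi)$.

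Third, I would invoke Alexeev's moduli-theoretic description of stable toric pairs equipped with a polytope fibration. For the fibration $\pi\colon\Delta_d^{n-d-1}\rightarrow(n-d-1)\Delta_d$, the resulting moduli space is by construction the toric variety $X_{\Sigma(\pi)}=X_{\Sigma(n)}$. Combined with the identification of the open stratum, this yields the desired isomorphism.

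The main obstacle, and the heart of the argument, is establishing the combinatorial dictionary in the previous step: showing that every stable limit with weight $\mathbf{t}$ arises from a unique coherent mixed subdivision of $(n-d-1)\Delta_d$, and that the boundary stratifications on both sides agree via the face structure of $\Sigma(n)$. Once this dictionary is in place — which is the content of \cite[Example~9.6]{Ale08} — the isomorphism follows from the universal property of $X_{\Sigma(n)}$ as the moduli of stable toric pairs over this particular fibration.
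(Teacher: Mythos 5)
The paper does not prove this theorem: it is imported verbatim as \cite[Example~9.6]{Ale08}, so there is no internal proof to compare your argument against. Your outline is a fair reconstruction of how Alexeev's argument actually runs --- normalize the $d+1$ weight-one hyperplanes to the coordinate simplex so that $K_{\mathbb{P}^d}+\sum_{i=1}^{d+1}H_i\sim 0$ and the pair becomes a stable toric pair polarized by the $\epsilon$-weighted divisors, identify degenerations with coherent mixed subdivisions of $(n-d-1)\Delta_d$ via the Cayley trick, and recognize the resulting poset as the face lattice of the fiber polytope $\Sigma(\pi)$. But as written it is not a proof: your final paragraph concedes that the combinatorial dictionary (every stable limit arises from a unique coherent mixed subdivision, and the stratifications match) ``is the content of \cite[Example~9.6]{Ale08}'', which is precisely the statement being proved, so the argument is circular at its crucial step. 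One small imprecision: the open stratum has dimension $d(n-d-1)-d=d(n-d-2)$ (the $d(n-d-1)$-dimensional parameter space of the light hyperplanes modulo the residual $d$-dimensional torus), which is what matches $\dim\Sigma(\pi)=\dim\Delta_d^{n-d-1}-\dim(n-d-1)\Delta_d$; your phrasing suggests the quotient itself is $d(n-d-1)$-dimensional, which would not match.
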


As expected, the compact moduli spaces $\overline{\mathrm{M}}_{\mathbf{b}}(\mathbb{P}^d,n)$ depend on the weight vector $\mathbf{b}$. 

\begin{definition}
The collection of all possible weight vectors
\[
\mathcal{D}(d+1,n):=\left\{(b_1,\ldots,b_n)\in\mathbb{Q}^n\mid0<b_1,\ldots,b_n\leq1,~\sum_{i=1}^nb_i>d+1\right\}
\]
is called the \emph{weight domain}. A \emph{wall} is a hyperplane in $\mathbb{Q}^n$ in the form
\[
x_I:=\sum_{i\in I}x_i=k
\]
for some $I\subseteq\{1,\ldots,n\}$ with $2\leq|I|\leq n-2$ and $1\leq k\leq d$. The walls induce the so-called \emph{chamber decomposition} of the weight domain $\mathcal{D}(d+1,n)$.
\end{definition}

\begin{theorem}[{\cite[Theorem~5.5.2]{Ale15}}]
\label{thm:wall_crossing_valery}
The domain \(\mathcal{D}(d+1,n) \)  is divided by the hyperplanes $x_I = k $ for all $I \subseteq \{1, \ldots, n \}$, $2 \leq |I| \leq (n-2)$, $ 1\leq k \leq d$, into finitely many chambers. Then, 
\begin{enumerate}
    \item If $\mathbf{b}$ and $\mathbf{b}'$ lie in the same chamber, denoted $\mathbf{b} \sim \mathbf{b}'$, then 
    $\overline{\mathrm{M}}_{\mathbf{b}}(\mathbb{P}^d,n) =  
    \overline{\mathrm{M}}_{\mathbf{b}'}(\mathbb{P}^d,n)$, 
    and their families are the same;
    \item If $\mathbf{b}$ is in the closure of the chamber of $\mathbf{b}'$, denoted as $\mathbf{b} \in \overline{\mathbf{b}'}$, then there exist a contraction 
    $\overline{\mathrm{M}}_{\mathbf{b}'}(\mathbb{P}^d,n) \rightarrow 
    \overline{\mathrm{M}}_{\mathbf{b}}(\mathbb{P}^d,n)$ on the moduli spaces and a morphism between the families, that is, 
    $ \left( \mathcal{X}', \mathcal{B}' \right) \rightarrow 
      \left( \mathcal{X}, \mathcal{B} \right) $; 
      \item further, if $\mathbf{b} \in \overline{\mathbf{b'}}$ and 
      $\mathbf{b}' \leq \mathbf{b}$, then 
    $\overline{\mathrm{M}}_{\mathbf{b}}(\mathbb{P}^d,n) =  
    \overline{\mathrm{M}}_{\mathbf{b}'}(\mathbb{P}^d,n)$
    and on the fibers, the morphism is birational on every irreducible component.  
\end{enumerate}
\end{theorem}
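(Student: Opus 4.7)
The plan is to leverage the linearity in $\mathbf{b}$ of the two constraints defining stability, together with the moduli machinery of Theorem~\ref{thm:family-over-moduli-weighted-hyperplane-arrangements}. The walls arise from the following standard input: for any set $\{H_i : i\in I\}$ of hyperplanes meeting (transversely) along a codimension-$k$ linear subspace $L\subseteq \mathbb{P}^d$, a direct log discrepancy computation shows that $(\mathbb{P}^d,\sum_i b_i H_i)$ is log canonical at the generic point of $L$ exactly when $\sum_{i\in I}b_i\leq k$. An analogous linear condition governs semi-log canonicity at each stratum of every broken degeneration parametrized by the Alexeev compactification, and ampleness of $K_X+\mathbf{b}B$ along a given stratum is likewise linear in $\mathbf{b}$. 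Finiteness of the chamber decomposition of $\mathcal{D}(d+1,n)$ then follows because the only inequalities that can change sign as $\mathbf{b}$ moves in $\mathcal{D}(d+1,n)$ have the form $\sum_{i\in I}b_i=k$ with $2\leq |I|\leq n-2$ and $1\leq k\leq d$.

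For (1), within a single chamber every strict inequality $\sum_{i\in I}b_i\neq k$ persists, so for each combinatorial type of pair both semi-log canonicity and relative ampleness of the log canonical class are preserved unchanged; the family of Theorem~\ref{thm:family-over-moduli-weighted-hyperplane-arrangements} is literally constant on the chamber, and hence so is its moduli space. For (2), if $\mathbf{b}$ lies in the closure of the chamber of $\mathbf{b}'$ then at the wall some strata $S\subseteq X$ of a $\mathbf{b}'$-stable pair can transition from $(K_X+\mathbf{b}B)|_S$ being ample to being numerically trivial, and a stable replacement is required. This replacement is constructed as the relative ample model, namely the $\mathrm{Proj}$ over $\overline{\mathrm{M}}_{\mathbf{b}'}(\mathbb{P}^d,n)$ of the section ring of $K_{\mathcal{X}'}+\sum b_i\mathcal{B}'_i$, which is finitely generated and relatively semi-ample by the final assertion of Theorem~\ref{thm:family-over-moduli-weighted-hyperplane-arrangements}. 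The universal property of $\overline{\mathrm{M}}_{\mathbf{b}}(\mathbb{P}^d,n)$ then yields the claimed contraction of moduli and the associated morphism of families.

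For (3), the extra hypothesis $\mathbf{b}'\leq \mathbf{b}$ means that the approach to the wall is weight-increasing component by component. On each toric-type irreducible component of a stable degeneration, the boundary divisors $\mathcal{B}_i$ are nef, so $K+\mathbf{b}B=(K+\mathbf{b}'B)+(\mathbf{b}-\mathbf{b}')B$ remains ample componentwise because a nef effective divisor is being added to an ample one. Consequently no stratum is contracted, the stable replacement from (2) is an isomorphism on underlying schemes, and the induced map on moduli is the identity; the birationality on every irreducible component of each fiber records the fact that any residual modification, such as passing to the normalization of a component, remains birational. The main obstacle I anticipate is the uniform control required in step (2): at the wall, $K+\mathbf{b}B$ may fail to be $\mathbb{Q}$-Cartier on individual fibers, as emphasized in the discussion after Theorem~\ref{thm:mainA}, so the ample-model construction must be carried out globally over the family rather than fiber by fiber. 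Resolving this requires the uniform semi-ampleness provided by Theorem~\ref{thm:family-over-moduli-weighted-hyperplane-arrangements} together with properness of $\overline{\mathrm{M}}_{\mathbf{b}'}(\mathbb{P}^d,n)$, for which Alexeev's stable-toric-pair framework furnishes the appropriate setting.
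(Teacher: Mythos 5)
This statement is not proved in the paper at all: it is quoted verbatim from Alexeev's book as \cite[Theorem~5.5.2]{Ale15}, so there is no internal argument to compare yours against. Judged as a standalone proof, your sketch captures the correct source of the walls (linearity in $\mathbf{b}$ of the log canonicity inequalities $\sum_{i\in I}b_i\leq k$ at codimension-$k$ strata and of the ampleness conditions) and part (1) is essentially right, but parts (2) and (3) have genuine gaps.

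The most serious one is in (3). You assert that on each irreducible component of a stable degeneration the divisors $\mathcal{B}_i$ are nef, so that adding $(\mathbf{b}-\mathbf{b}')B$ to an ample divisor preserves ampleness. This is unjustified and false in general: the restrictions of the $\mathcal{B}_i$ to components of a broken fiber can have negative intersection with curves (see the computation in Lemma~\ref{lem:stability-pair-for-weights-nt}, where $D_2\cdot e=-1$, and note that Lemma~\ref{lemma:ampleness-on-toric-variety-with-coefficient} exists precisely because one can only add a \emph{sufficiently small} multiple of a non-nef divisor to an ample one). The actual content of (3) is that approaching a wall $x_I=k$ from below changes neither the collection of slc pairs (the lc condition is the non-strict inequality, so it survives at the wall) nor their ample models, and this requires an argument, not a nefness claim. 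A second gap sits in (2) and is one you partly flag yourself: at the wall $K_X+\mathbf{b}B$ can fail to be $\mathbb{Q}$-Cartier fiberwise (Lemma~\ref{lem:Q-Cartier-modification} exhibits exactly this), so the relative ample model must be constructed after a birational modification of the fibers, and the ``universal property of $\overline{\mathrm{M}}_{\mathbf{b}}(\mathbb{P}^d,n)$'' you invoke is delicate here, since Theorem~\ref{thm:family-over-moduli-weighted-hyperplane-arrangements} only provides a coarse-type statement (distinct points have non-isomorphic fibers), not a fine moduli functor from which the contraction of moduli spaces follows formally. These points are exactly where the weight of Alexeev's proof lies, and your sketch defers rather than resolves them; citing \cite[Theorem~5.5.2]{Ale15}, as the paper does, is the appropriate course.
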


\section{Wall crossing and modification in the interior of the moduli space}
\label{sec:wall-crossing}

The main purpose of this section is to prove 
Theorem~\ref{thm:locally-at-e-is-simple-blow-up}, which is the first part of Theorem \ref{thm:mainA}. We first need some preliminaries to state our result.

\begin{lemma}
\label{def:special-point-in-LM-for-lines}
Under the isomorphism $X_{\Sigma(n)}\cong\overline{\mathrm{LM}}(\mathbb{P}^d,n)$ in Theorem~\ref{thm:LM-space-for-lines-is-toric}, the identity $e$ of the dense open torus of $X_{\Sigma(n)}$ corresponds to the point in $\overline{\mathrm{LM}}(\mathbb{P}^d,n)$ parametrizing the stable hyperplane arrangement $(\mathbb{P}^d,\mathbf{t}H)$, where $H_{d+2}=\ldots=H_n=:H$ and the hyperplanes $H_1,\ldots,H_{d+1},H$ are linearly general (this configuration is pictured in Figure~\ref{fig:stable-pair-parametrized-by-e-LM-lines} for $d=2$).
\end{lemma}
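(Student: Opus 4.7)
The plan is to describe an explicit slice for $\mathrm{PGL}_{d+1}$ near a generic point of the interior of $\overline{\mathrm{LM}}(\mathbb{P}^d,n)$, identify it with the dense torus of $X_{\Sigma(n)}$ via Alexeev's construction, and read off the identity. First, I would rigidify the moduli problem: using $\mathrm{PGL}_{d+1}$, I fix $H_i = \{x_{i-1}=0\}$ for $i = 1,\ldots,d+1$ (the coordinate hyperplanes), leaving only the residual diagonal torus $T_0 \cong (\mathbb{G}_m)^{d+1}/\mathbb{G}_m \cong (\mathbb{G}_m)^d$. Each light hyperplane linearly general with $H_1,\ldots,H_{d+1}$ takes the form $\{\sum_{i=0}^d a_{k,i}x_i=0\}$ with $a_{k,i}\in\mathbb{G}_m$, and quotienting the resulting parameter space by $T_0$ produces an affine open $U\cong(\mathbb{G}_m)^{d(n-d-2)}$ of the moduli space. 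Its dimension matches $\dim\Sigma(n)=d(n-d-2)$.

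Second, I would invoke Alexeev's identification (\cite[Example~9.6]{Ale08}) to recognize $U$ as the dense torus of $X_{\Sigma(n)}$: the torus of $X_{\Sigma(n)}$ acts on the universal family of stable hyperplane arrangements by rescaling the coefficients of the light hyperplanes, and the orbit structure matches $U$. Under this identification, the identity is the orbit of the point with $a_{k,i}=1$ for all $k,i$, i.e.\ the configuration with $H_{d+2}=\ldots=H_n=H:=\{x_0+\cdots+x_d=0\}$. The $d+2$ hyperplanes $H_1,\ldots,H_{d+1},H$ are in linearly general position, since the intersection of any $d$ of the coordinate ones is a single point $[0\!:\!\cdots\!:\!1\!:\!\cdots\!:\!0]$ which does not lie on $H$, and any $d+1$ of the coordinate ones have empty intersection.

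Third, I would verify that $(\mathbb{P}^d,\mathbf{t}H)$ is stable and therefore lies in $\mathrm{M}_\mathbf{t}(\mathbb{P}^d,n)$: the log canonical divisor equals $\epsilon(n-d-1)\mathcal{O}_{\mathbb{P}^d}(1)$, hence is ample, and the pair is log canonical because the total weight $(n-d-1)\epsilon$ at the coincidence locus $H$ is less than $1$ for $\epsilon$ small, while along the coordinate hyperplanes the arrangement is linearly general.

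The main obstacle is the matching in the second step: bridging the intrinsic toric structure of $X_{\Sigma(n)}$ (coming from the fan of the fiber polytope) with the explicit parameter torus on the moduli side, so that the canonical identity of the former is located as the distinguished point in the latter. This requires unraveling Alexeev's construction of the universal family over $X_{\Sigma(n)}$ and its torus-equivariance; supporting evidence comes from the mixed subdivision dictionary, under which the whole dense torus of $X_{\Sigma(n)}$ corresponds to the trivial Minkowski decomposition $(n-d-1)\Delta_d=\Delta_d+\cdots+\Delta_d$, matching the irreducibility of the configuration claimed to be parametrized by $e$.
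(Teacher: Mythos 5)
Your setup is sound: fixing the heavy hyperplanes to be the coordinate hyperplanes, parametrizing the light ones by their (nonvanishing) coefficients, and quotienting by the residual diagonal torus does produce a parameter torus $U\cong(\mathbb{G}_m)^{d(n-d-2)}$ of the correct dimension, with a distinguished ``all-ones'' point whose configuration is exactly the one in the statement; your stability check for that configuration is also fine. But the proof has a genuine gap precisely at the step you yourself flag as the main obstacle: you never establish that Alexeev's isomorphism $X_{\Sigma(n)}\cong\overline{\mathrm{LM}}(\mathbb{P}^d,n)$ carries the identity $e$ of the torus of $X_{\Sigma(n)}$ to the all-ones point of $U$ rather than to some translate of it. Knowing that the torus of $X_{\Sigma(n)}$ acts simply transitively on $U$ only exhibits $U$ as a torsor; every point of a torsor looks the same, so ``the orbit structure matches $U$'' cannot single out the identity. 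Your remark about the trivial Minkowski decomposition is likewise insensitive to this issue, since the entire dense torus corresponds to the trivial mixed subdivision. As written, the argument proves that $e$ parametrizes \emph{some} arrangement with all heavy hyperplanes coordinate and all light coefficients nonzero, which is strictly weaker than the claim (in particular it does not force the light hyperplanes to coincide).

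The paper closes exactly this gap by a different mechanism: it uses the isomorphism $(\mathbb{P}^d)^n/\!\!/_{L_{d,n}}\mathrm{SL}_{d+1}\cong(\mathbb{P}^{n-d-2})^d$ of Gallardo--Routis, where the toric structure and the identity $([1:\cdots:1],\ldots,[1:\cdots:1])$ are explicit and the point with coordinates $a_i^{(j)}$ is explicitly matched to the arrangement $L_i=V(x_0+a_i^{(1)}x_1+\cdots+a_i^{(d)}x_d)$. It then identifies this GIT quotient with a moduli space $\overline{\mathrm{M}}_{\mathbf{b}}(\mathbb{P}^d,n)$ for a suitable weight vector, and invokes the Kapranov--Sturmfels--Zelevinsky result that the reduction morphism from the toric Chow quotient $\overline{\mathrm{LM}}(\mathbb{P}^d,n)$ to this GIT quotient is a toric birational morphism, hence restricts to an isomorphism of dense tori preserving identities. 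That chain is what pins $e$ to the point with all $a_i^{(j)}=1$. To repair your proof you would need an argument of comparable force --- for instance, carefully tracing the torus-equivariance of Alexeev's universal family over $X_{\Sigma(n)}$, or a symmetry argument showing the configuration at $e$ must be invariant under the $S_{d+1}\times S_{n-d-1}$ symmetries of the fiber polytope --- neither of which is carried out in the proposal.
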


\begin{proof}
By \cite[Lemma~4.2]{GR17}, we have an isomorphism
\begin{equation}
\label{eq:iso-GIT-product-proj-spaces}
(\mathbb{P}^d)^n/\!\!/_{L_{d,n}}\mathrm{SL}_{d+1}\cong(\mathbb{P}^{n-d-2})^d,
\end{equation}
where $L_{d,n}$ is an appropriate linearized line bundle. We view $(\mathbb{P}^{n-d-2})^d$ as a toric variety with the usual toric structure and identity $e$ given by $([1:\ldots:1],\ldots,[1:\ldots:1])$. By \cite[Lemma~4.2~(ii)]{GR17}, the point $([a_{d+2}^{(1)}:\ldots:a_{n}^{(1)}],\ldots,[a_{d+2}^{(d)}:\ldots:a_{n}^{(d)}])\in(\mathbb{P}^{n-d-2})^d$ parametrizes the configuration of $n$ hyperplanes in $\mathbb{P}^d$ given by
\begin{align}
\label{eq:arrangement-GIT}
\begin{split}
L_1&=V(x_0),~\ldots,~L_{d+1}=V(x_d),\\
L_i&=V(x_0+a_i^{(1)}x_1+\ldots+a_i^{(d)}x_d),~\text{for}~d+2\leq i\leq n.
\end{split}
\end{align}

On the other hand, let us view the linearized line bundle $L_{d,n}$ as a choice of weights $\mathbf{a}=(a_1,\ldots,a_n)\in\mathbb{Q}_{>0}^n$ such that $\sum_{i=1}^na_i=d+1$. More precisely, we consider
\[
\mathbf{a}:=(\underbrace{1,\ldots,1}_{d\textrm{-times}},1-\delta_1,\epsilon-\delta_2,\ldots,\epsilon-\delta_2),
\]
where $\delta_1,\delta_2$ are rational numbers satisfying the following conditions:
\begin{itemize}

\item $0<\delta_1\ll1$ and $0<\delta_2<\epsilon$;

\item $d+1-\delta_1+(n-d-1)(\epsilon-\delta_2)=d+1$;

\item $d+1-\delta_1+(n-d-1)\epsilon>d+1$.

\end{itemize}
In particular, $\mathbf{a}$ is contained in the closure of the chamber for $\mathbf{b}=(1,\ldots,1,1-\delta_1,\epsilon,\ldots,\epsilon)$. For such weight vector we have a moduli space of stable hyperplane arrangements $\overline{\mathrm{M}}_{\mathbf{b}}(\mathbb{P}^d,n)$, and by \cite[Theorem~1.5]{Ale08} we have that
\begin{equation}
\label{eq:Mb-iso-to-GIT}
\overline{\mathrm{M}}_{\mathbf{b}}(\mathbb{P}^d,n)\cong(\mathbb{P}^d)^n/\!\!/_{\mathbf{a}}\mathrm{SL}_{d+1}.
\end{equation}
We can combine \eqref{eq:iso-GIT-product-proj-spaces} and \eqref{eq:Mb-iso-to-GIT} with the reduction morphism $\overline{\mathrm{LM}}(\mathbb{P}^d,n)\rightarrow\overline{\mathrm{M}}_{\mathbf{b}}(\mathbb{P}^d,n)$ to obtain
\[
\overline{\mathrm{LM}}(\mathbb{P}^d,n)\rightarrow\overline{\mathrm{M}}_{\mathbf{b}}(\mathbb{P}^d,n)\cong(\mathbb{P}^d)^n/\!\!/_{\mathbf{a}}\mathrm{SL}_{d+1}\cong(\mathbb{P}^{n-d-2})^d.
\]
As $\overline{\mathrm{LM}}(\mathbb{P}^d,n)$ is a toric Chow quotient, the above morphism to the GIT quotient is toric by \cite[\S\,3]{KSZ91}. In particular, since it is also birational, it induces an isomorphism of the corresponding tori, preserving the identity. It follows that the stable hyperplane arrangement parametrized by $e\in\overline{\mathrm{LM}}(\mathbb{P}^d,n)$ is the one obtained by imposing $a_i^{(j)}=1$ for all $i,j$ in \eqref{eq:arrangement-GIT}, which is the claimed one.
\end{proof}

\begin{figure}
\begin{tikzpicture}[line cap=round,line join=round,>=triangle 45,x=1.0cm,y=1.0cm]
\clip(0.9,0.88) rectangle (7.1,6.4);
\draw [dashed,line width=1.0pt] (1.,1.)-- (7.,1.);
\draw [dashed,line width=1.0pt] (1.,1.)-- (4.,6.196152422706633);
\draw [dashed,line width=1.0pt] (4.,6.196152422706633)-- (7.,1.);
\draw [line width=3pt] (1.84,1.05)-- (4.68,4.95);
\draw [line width=1.0pt] (3.3,5)-- (5.94,1);
\draw [line width=1.0pt] (1.58,2.05)-- (5.48,3.56);
\draw [line width=1.0pt] (2.85,4.2)-- (6.8,1);
\draw (2.26,1.82) node[anchor=north west] {$H_4=\ldots=H_n$};
\draw (3.4,5.4) node[anchor=north west] {$H_1$};
\draw (2.9,4.6) node[anchor=north west] {$H_2$};
\draw (4.6,4.0) node[anchor=north west] {$H_3$};
\end{tikzpicture}
\caption{Stable line arrangement for the weights $\mathbf{t}=(1,1,1,\epsilon,\ldots,\epsilon)$ parametrized by the point $e\in\overline{\mathrm{LM}}(\mathbb{P}^2,n)$.}
\label{fig:stable-pair-parametrized-by-e-LM-lines}
\end{figure}
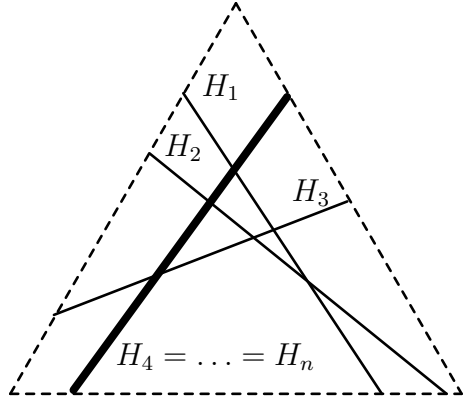

\begin{definition}
\label{def:weight_nt}
Let $0<\epsilon\ll1$, $\epsilon\in\mathbb{Q}$. We define the weight vector
\begin{align}
\label{eq:Weightd}
\mathbf{nt}=\mathbf{nt}(d,n):=\left(\underbrace{1-\epsilon,\ldots,1-\epsilon}_{(d+1)\textrm{-times}},\frac{1+\epsilon}{n-d-1},\ldots,\frac{1+\epsilon}{n-d-1}\right)\in\mathbb{Q}^n.
\end{align}
We observe that the sum of the weights in $\mathbf{nt}$ is strictly larger than $d+1$, so it defines a moduli space $\overline{\mathrm{M}}_{\mathbf{nt}}(\mathbb{P}^d,n)$. As in the case of $\mathbf{t}$, when simply writing $\mathbf{nt}$, the values of $d$ and $n$ will be clear from the context.
\end{definition}

\begin{remark}
For $\epsilon\ll1$, the sum of the weights in $\mathbf{nt}(d,n)$ is strictly larger than the sum of the weights in $\mathbf{t}(d,n)$. So, in a way, the pairs parametrized by $\overline{\mathrm{M}}_{\mathbf{nt}}(\mathbb{P}^d,n)$ are `more positive' than the ones parametrized by $\overline{\mathrm{LM}}(\mathbb{P}^d,n)$
\end{remark}

\begin{definition} 
Denote by $\mathrm{LM}(\mathbb{P}^d,n)$ the open subset of $\overline{\mathrm{LM}}(\mathbb{P}^d,n)$ parametrizing stable pairs $(\mathbb{P}^d,\mathbf{t}H)$. We point out that we are not requiring that the hyperplanes $H_1,\ldots,H_n$ are linearly general. In particular, $e\in\mathrm{LM}(\mathbb{P}^d,n)$.
\end{definition}

We are now ready to state the main result of this section.
\begin{theorem}
\label{thm:locally-at-e-is-simple-blow-up}
Let $\mathbf{nt}$ be the weight vector in Definition~\ref{def:weight_nt}. Then, the following hold:
\begin{itemize}

\item[(i)] There exists a birational morphism 
\[
f\colon\overline{\mathrm{M}}_{\mathbf{nt}}(\mathbb{P}^d,n)\rightarrow\overline{\mathrm{LM}}(\mathbb{P}^d,n).
\] 

\item[(ii)] Let us define $\mathcal{U}_{\mathbf{nt}}(\mathbb{P}^d,n):=f^{-1}(\mathrm{LM}(\mathbb{P}^d,n))$. Then,
\[
\Bl_e\mathrm{LM}(\mathbb{P}^d,n)\cong\mathcal{U}_{\mathbf{nt}}(\mathbb{P}^d,n)^\nu.
\]
\end{itemize}
\end{theorem}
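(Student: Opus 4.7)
For part (i), the approach is to apply Theorem~\ref{thm:wall_crossing_valery}(2). A direct inspection shows that the walls $x_I=k$ passing through $\mathbf{t}$ are those with $I\subseteq\{1,\ldots,d+1\}$ and $|I|\leq d$, while the walls through $\mathbf{nt}$ are $x_{\{i\}\cup\{d+2,\ldots,n\}}=2$ for $i\leq d+1$; a computation along the segment $s\mathbf{t}+(1-s)\mathbf{nt}$ verifies that for small enough $\epsilon$ no additional walls are crossed in the open segment, so both weight vectors lie in the closure of a common chamber $C$. Perturbing $\mathbf{nt}$ slightly into $C$ and combining the wall-crossing contractions from the perturbation down to $\mathbf{t}$ with the identification of the perturbed moduli space with $\overline{\mathrm{M}}_{\mathbf{nt}}$ yields the morphism $f$. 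Birationality follows because both moduli spaces contain the dense open locus $\mathrm{M}_\mathbf{1}(\mathbb{P}^d,n)$ of linearly general arrangements.

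For part (ii), I would first show that $f$ restricts to an isomorphism over $\mathrm{LM}(\mathbb{P}^d,n)\setminus\{e\}$. At any configuration in $\mathrm{LM}\setminus\{e\}$, no $n-d-1$ of the $\epsilon$-weighted hyperplanes coincide, so every reduced irreducible component of the boundary divisor has $\mathbf{nt}$-coefficient at most $\tfrac{(n-d-2)(1+\epsilon)}{n-d-1}<1$, and the pair remains stable with weights $\mathbf{nt}$. At $e$, however, the merged hyperplane $H=H_{d+2}=\cdots=H_n$ acquires $\mathbf{nt}$-coefficient $1+\epsilon>1$, so the pair is non-lc and requires a nontrivial stable replacement.

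The heart of the argument is to identify the space of stable replacements with $\mathbb{P}(T_e\mathrm{LM})$. Given a tangent direction $v\in T_e\mathrm{LM}\cong (T_H\mathbb{P}^{d\vee})^{n-d-1}/\Delta$, realized as a tuple $(v_{d+2},\ldots,v_n)$ modulo simultaneous translation, take a one-parameter deformation of the configuration at $e$ with this tangent direction and form the blow up $Y\to\mathbb{P}^d\times C$ along $H\times\{0\}$. The central fiber decomposes as $\widetilde{\mathbb{P}^d}\cup E$, where $\widetilde{\mathbb{P}^d}\cong\mathbb{P}^d$ carries the original $H_1,\ldots,H_{d+1}$ and $E\to H$ is a $\mathbb{P}^1$-bundle whose sections encode the strict transforms $\widetilde{\mathcal{H}}_i$ (for $i\geq d+2$) via the $v_i$. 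I then verify that $(Y_0,\mathbf{nt}\,\widetilde{\mathcal{H}})$ is $\mathbf{nt}$-stable and independent of the choice of parameter along $v$ (rescaling only reparameterizes the family), yielding a map $\mathbb{P}(T_e\mathrm{LM})\to f^{-1}(e)$.

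Finally, the stable replacements and the universal family over $\mathrm{LM}\setminus\{e\}$ should glue to a flat family of $\mathbf{nt}$-stable pairs over $\Bl_e\mathrm{LM}(\mathbb{P}^d,n)$. Theorem~\ref{thm:family-over-moduli-weighted-hyperplane-arrangements} then yields a morphism $\Bl_e\mathrm{LM}\to\overline{\mathrm{M}}_\mathbf{nt}(\mathbb{P}^d,n)$ with image in $\mathcal{U}_{\mathbf{nt}}$, factoring through $\mathcal{U}_{\mathbf{nt}}^\nu$ since $\Bl_e\mathrm{LM}$ is smooth (being the blow up of a smooth point of the toric variety $\overline{\mathrm{LM}}$), hence normal. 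The morphism is bijective on geometric points (an isomorphism off the exceptional divisor by the first reduction, and bijective on the exceptional by the tangent-direction parametrization), and Zariski's Main Theorem concludes the isomorphism. The main obstacle is the explicit verification that $(Y_0,\mathbf{nt}\,\widetilde{\mathcal{H}})$ is actually $\mathbf{nt}$-stable, especially ampleness of the log canonical divisor on the reducible central fiber, together with injectivity of the tangent-direction-to-stable-limit map, which requires understanding how the automorphism group of the bubble $E$ acts on configurations of sections.
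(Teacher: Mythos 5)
Your part~(i) contains a genuine error: you assert that no additional walls are crossed in the open segment from $\mathbf{t}$ to $\mathbf{nt}$, so that the two weight vectors lie in the closure of a common chamber. This is false. The wall $x_{d+2}+\cdots+x_n=1$ separates them: evaluating gives $(n-d-1)\epsilon<1$ at $\mathbf{t}$ and $1+\epsilon>1$ at $\mathbf{nt}$ (this is exactly Lemma~\ref{lemma:Wall_t_nt}~(3), and it is the entire reason the wall crossing is nontrivial --- if there were no separating wall, Theorem~\ref{thm:wall_crossing_valery}~(1) would force $\overline{\mathrm{M}}_{\mathbf{nt}}(\mathbb{P}^d,n)=\overline{\mathrm{LM}}(\mathbb{P}^d,n)$ and your part~(ii) would be vacuous, an internal inconsistency in your write-up). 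Consequently, perturbing $\mathbf{nt}$ into a chamber and contracting ``down to $\mathbf{t}$'' via Theorem~\ref{thm:wall_crossing_valery}~(2) does not work as stated: the contraction only reaches the weight $\mathbf{w}$ on the wall, and one still has to identify $\overline{\mathrm{M}}_{\mathbf{w}}(\mathbb{P}^d,n)$ with $\overline{\mathrm{LM}}(\mathbb{P}^d,n)$ from the other side of the wall. This requires the monotonicity clause of Theorem~\ref{thm:wall_crossing_valery}~(3), which forces the introduction of auxiliary weights with $\mathbf{a}\leq\mathbf{t}$ and $\mathbf{a}\leq\widehat{\mathbf{w}}$ (and similarly $\mathbf{h}\leq\mathbf{nt}$ on the other side); this is what the weights $\mathbf{a},\widehat{\mathbf{w}},\mathbf{h}$ in Figure~\ref{fig:walls-chambers-and-crossing} are for. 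The morphism exists, but not by the route you describe.

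Part~(ii) follows the right strategy (stable replacement of the pair at $e$ by bubbling off a $\mathbb{P}^1$-bundle over the coincident hyperplane, parametrized by tangent directions, then Zariski's Main Theorem), but several steps are thinner than they need to be. First, stability over $\mathrm{LM}(\mathbb{P}^d,n)\setminus\{e\}$ is not just the condition that divisor coefficients stay $\leq 1$: one must check log canonicity along all strata, i.e.\ that the sum of the $\mathbf{nt}$-weights of the hyperplanes through a codimension-$c$ stratum is at most $c$ (the case $m_1,m_2>0$ in Lemma~\ref{lem:wall-crossing-in-the-interior-away-from-e} is where the work is). Second, independence of the stable limit from the choice of arc is not merely a matter of reparametrization: an arc hitting a point of $\mathbb{E}_n$ can be tangent to higher order, in which case a single blow up does not separate the sections and one must iterate and then contract the intermediate ruled components (the $s\geq 2$ case in the paper); the real content is that the resulting section of the last exceptional bundle depends only on first-order data. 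Third, rather than gluing the replacements into a global flat family over $\Bl_e\mathrm{LM}(\mathbb{P}^d,n)$ (which you assert but do not construct), it is cleaner to use an arc-wise extension criterion as in \cite[Lemma~3.18]{AET23}. Finally, for ZMT you do not need injectivity on $\mathbb{E}_n$ (which you flag as an obstacle): since $\mathbb{E}_n\cong\mathbb{P}^{N-1}$, it suffices that the restriction is generically finite, for then no curve can be contracted, and quasi-finiteness plus properness gives the isomorphism after normalization.
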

The proof of Theorem~\ref{thm:locally-at-e-is-simple-blow-up} is the focus of the rest of the section. The map $f$ is obtained via Theorem \ref{thm:wall_crossing_valery}. The isomorphism between 
$\mathcal{U}_{\mathbf{nt}}(\mathbb{P}^d,n)^\nu$
and $\Bl_e\mathrm{LM}(\mathbb{P}^d,n)$ is obtained by judiciously studying the stable replacements of the pair parametrized by $e\in\overline{\mathrm{LM}}(\mathbb{P}^d,n)$ when we change the weight vector from $\mathbf{t}$ to $\mathbf{nt}$.
\subsection{Preliminary lemmas} We start by describing the structure of the chamber decomposition of $\mathcal{D}(d+1,n)$ between the weight vectors $\mathbf{t}$ and $\mathbf{nt}$.

\begin{figure}
\begin{tikzpicture}
\draw [line width=1.pt,dotted] (-5.,0.)-- (5.,0.);
\draw [line width=1.pt] (0.,3.)-- (0.,-3.);
\draw [line width=1.pt,dotted] (0.,1.)-- (5.,0.);
\draw [line width=1.pt,dash pattern=on 2pt off 2pt] (0.,3.7)-- (0.,3.);
\draw [line width=1.pt,dash pattern=on 2pt off 2pt] (0.,-3.)-- (0.,-3.7);
\draw [line width=1.pt,dash pattern=on 2pt off 2pt] (-5.666666666666667,1.)-- (-6.054956804024325,1.5824352060364884);
\draw [line width=1.pt,dash pattern=on 2pt off 2pt] (-5.666666666666667,-1.)-- (-6.054956804024325,-1.5824352060364884);
\draw [line width=1.pt,dash pattern=on 2pt off 2pt] (5.666666666666667,1.)-- (6.054956804024325,1.5824352060364884);
\draw [line width=1.pt,dash pattern=on 2pt off 2pt] (5.666666666666667,-1.)-- (6.054956804024325,-1.5824352060364884);
\draw [line width=1.pt,dash pattern=on 2pt off 2pt] (-3.,3.)-- (-2.6117098626423387,3.582435206036492);
\draw [line width=1.pt,dash pattern=on 2pt off 2pt] (-3.,-3.)-- (-2.6117098626423387,-3.582435206036492);
\draw [line width=1.pt,dash pattern=on 2pt off 2pt] (3.,-3.)-- (2.6117098626423387,-3.582435206036492);
\draw [line width=1.pt,dash pattern=on 2pt off 2pt] (3.,3.)-- (2.6117098626423387,3.582435206036492);
\draw (-5.4,0.0) node {$\mathbf{t}$};
\draw (5.5,0.0) node {$\mathbf{nt}$};
\draw (-0.3,1.3) node {$\widehat{\mathbf{w}}$};
\draw (-0.3,0.3) node {$\mathbf{w}$};
\draw (-1.8,-1.7) node {$\mathbf{a}$};
\draw (-2.5,0.4) node {$L$};
\draw (0.4,3.6) node {$W$};
\draw (0.0,-4) node {$x_{d+2}+\ldots+x_n=1$};
\draw (2.3,1.0) node {$\mathbf{h}$};
\draw [line width=1.pt] (-5.666666666666667,1.)-- (-3.,-3.);
\draw [line width=1.pt] (-5.666666666666667,-1.)-- (-3.,3.);
\draw [line width=1.pt] (3.,3.)-- (5.666666666666667,-1.);
\draw [line width=1.pt] (3.,-3.)-- (5.666666666666667,1.);
\draw [fill=black] (0.,0.) circle (3.0pt);
\draw [fill=black] (5.,0.) circle (3.0pt);
\draw [fill=black] (-5.,0.) circle (3.0pt);
\draw [fill=black] (0.,1.) circle (3.0pt);
\draw [fill=black] (2.019230769230769,0.5961538461538461) circle (3.0pt);
\draw [fill=black] (-2.,-2.) circle (3.0pt);
\end{tikzpicture}
\caption{Crossing of the wall $x_{d+2}+\ldots+x_n=1$ (pictured as the vertical line) moving from $\mathbf{t}$ to $\mathbf{nt}$ in the weight domain $\mathcal{D}(d+1,n)$. For the weight vectors $\mathbf{a},\widehat{\mathbf{w}}$, and $\mathbf{h}$, see Remark~\ref{rmk:idea-behind-the-additional-weights}.}
\label{fig:walls-chambers-and-crossing}
\end{figure}

\begin{lemma}
\label{lemma:Wall_t_nt}
The following hold (see Figure~\ref{fig:walls-chambers-and-crossing}):
\begin{enumerate}

\item The weight vector $\mathbf{t}$ lies exactly on the walls $x_I=|I|$ for all $I\subseteq\{1,\ldots,d+1\}$, $|I|\geq2$.

\item The weight vector $\mathbf{nt}$ lies exactly on the walls $x_i+x_{d+2}+\ldots+x_n=2$, for all $i\in\{1,\ldots,d+1\}$.

\item The unique wall $W$ intersecting the relative interior $L^\circ$ of the segment $L$ joining $\mathbf{t}$ and $\mathbf{nt}$ is given by $x_I=1$ where $I=\{d+2,\ldots,n\}$.

\item The weight vector $\mathbf{w}=(w_1,\ldots,w_n)=W\cap L^\circ$ is given by
\[
w_i = 
\begin{cases}
1 - \epsilon + \frac{\epsilon^2}{1+\epsilon (d +2- n )}

& \text{ if }\; 1 \leq i \leq d+1
\\
\frac{1}{n-d-1} & \text{ if }\;  d+2 \leq i \leq n.
\end{cases}
\]

\end{enumerate}
\end{lemma}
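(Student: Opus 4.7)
The plan is to reduce each of the four parts to a finite case analysis on pairs $(a,b)$, where for a subset $I \subseteq \{1,\ldots,n\}$ I set $a := |I \cap \{1,\ldots,d+1\}|$ and $b := |I \cap \{d+2,\ldots,n\}|$. This reduction is available because the weight vectors $\mathbf{t}$ and $\mathbf{nt}$, as well as all points on the segment $L$ joining them, are constant on the first $d+1$ coordinates and constant on the last $n-d-1$ coordinates, so $\sum_{i \in I} x_i$ evaluated at such a point depends only on $(a,b)$. Throughout, I would exploit the standing hypothesis $0 < \epsilon \ll 1$ (more quantitatively, $\epsilon < 1/(n-d-1)$ suffices) to rule out spurious integral coincidences.

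For parts (1) and (2) I would compute directly: $\sum_{i \in I} t_i = a + b\epsilon$, and the analogous sum at $\mathbf{nt}$ is $a(1-\epsilon) + \tfrac{b(1+\epsilon)}{n-d-1}$. Setting each of these equal to an integer $k \in \{1,\ldots,d\}$, the first equation forces $b = 0$ and $a = k$, yielding exactly $I \subseteq \{1,\ldots,d+1\}$ with $|I| \in \{2,\ldots,d\}$, as in (1); the second equation, combined with the bounds $0 \le a \le d+1$ and $0 \le b \le n-d-1$ together with $\epsilon$ small, singles out $a = 1$, $b = n-d-1$, $k = 2$, which are precisely the walls in (2). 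A direct check confirms that all resulting subsets $I$ satisfy $2 \leq |I| \leq n-2$ as required by the definition of a wall.

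For parts (3) and (4), I would parametrize $L$ by $\mathbf{w}(s) := (1-s)\mathbf{t} + s\,\mathbf{nt}$ with $s \in [0,1]$ and, for each candidate wall $x_I = k$, solve $\sum_{i \in I} w_i(s) = k$ for the unique $s = s_I$ to obtain
\[
s_I \;=\; \frac{k - a - b\epsilon}{\tfrac{b(1+\epsilon)}{n-d-1} - (a+b)\epsilon}.
\]
I would then determine when $s_I \in (0,1)$. When $b = 0$, $s_I = (k-a)/(-a\epsilon)$ cannot lie in $(0,1)$ for small $\epsilon$, since $|k-a| \geq 1$. When $b > 0$, for small $\epsilon$ the denominator is approximately $b/(n-d-1) \in (0,1]$ and positive, so $s_I > 0$ forces $k > a$, while the condition $s_I < 1$, together with the integrality of $k - a \geq 1$, leaves only the boundary case $k - a = 1$ and $b = n-d-1$. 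A final sharpening of $s_I < 1$ in this equality case yields $a = 0$, hence $I = \{d+2,\ldots,n\}$ and $k = 1$, proving (3). Substituting this $s_I$ back into $\mathbf{w}(s)$ and simplifying produces the explicit formula for $\mathbf{w}$ in (4).

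The main obstacle is the delicate boundary case $b = n-d-1$, $k - a = 1$: here the approximations used to eliminate other walls become tight, and the inequality $s_I < 1$ must be shown to be strict exactly when $a = 0$. Verifying this requires tracking the $\epsilon^2$-terms in both numerator and denominator of $s_I$ to ensure no additional wall sneaks onto $L^\circ$; once done, the rest of the lemma follows by algebraic simplification.
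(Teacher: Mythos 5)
Your proposal is correct and follows essentially the same route as the paper: the same reduction to the pair $(a,b)=(|I\cap\{1,\ldots,d+1\}|,|I\cap\{d+2,\ldots,n\}|)$, the same integrality argument for (1) and (2), and for (3)--(4) your criterion $s_I\in(0,1)$ is exactly equivalent to the paper's condition that $\mathbf{t}$ and $\mathbf{nt}$ lie on opposite sides of the wall, with the same parametrization of $L$ used to extract $\mathbf{w}$. The only nitpick is in the $b=0$ case, where your justification ``$|k-a|\geq 1$'' fails when $k=a$; but there $s_I=0\notin(0,1)$ anyway, so the conclusion stands.
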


\begin{proof}
(1) follows from the definition of the weight vector $\mathbf{t}$ and from the fact that $\epsilon\ll1$. To show (2), suppose that $x_I=k$ is a wall containing $\mathbf{nt}$, where $I\subseteq\{1,\ldots,n\}$, $2\leq|I|\leq n-2$, and $k\in\{1,\ldots,d\}$. Let $a=|I\cap\{1,\ldots,d+1\}|$ and $b=|I\cap\{d+2,\ldots,n\}|$, so that $0\leq a\leq d+1$ and $0\leq b\leq n-d-1$. Then, we must have that
\begin{equation}
\label{eq:proof-walls-containing-nt}
a\cdot(1-\epsilon)+b\cdot\frac{1+\epsilon}{n-d-1}=k.
\end{equation}
Suppose that $b<n-d-1$. Then, the left-hand side of \eqref{eq:proof-walls-containing-nt} would not be an integer because
\[
a\cdot(1-\epsilon)+b\cdot\frac{1+\epsilon}{n-d-1}=a+\left(-a\epsilon+b\cdot\frac{1+\epsilon}{n-d-1}\right),
\]
and the quantity in parentheses is positive and strictly less than $1$. Hence, $b=n-d-1$. So we need to find for which $0\leq a\leq d+1$, we have that $a\cdot(1-\epsilon)+(1+\epsilon)=a-a\epsilon+1+\epsilon$ is an integer. But this can only happen if $a=1$. By plugging in \eqref{eq:proof-walls-containing-nt} the values $a=1$ and $b=n-d-1$, we obtain that $k=2$. Therefore, the walls containing $\mathbf{nt}$ have the claimed form.

Let us prove (3). Suppose $W\colon x_I=k$ is a wall with the claimed property for some $I\subseteq\{1,\ldots,n\}$, $2\leq|I|\leq n-2$, and $1\leq k\leq d$. We will show that $I=\{d+2,\ldots,n\}$ and $k=1$.

Let $I_1:=I\cap\{1,\ldots,d+1\}$ and $I_2:=I\cap\{d+2,\ldots,n\}$. So, if we evaluate $x_I$ at the weight vectors $\mathbf{t}$ and $\mathbf{nt}$ we obtain
\begin{align*}
x_I(\mathbf{t})&=|I_1|+|I_2|\epsilon,\\
x_I(\mathbf{nt})&=|I_1|(1-\epsilon)+|I_2|\frac{1+\epsilon}{n-d-1}.
\end{align*}
Since, $\mathbf{t}$ and $\mathbf{nt}$ are on different sides of the wall $W$, we obtain that either $x_{I}(\mathbf{nt}) < x_{I}(\mathbf{t})$ or $x_{I}(\mathbf{t}) < x_{I}(\mathbf{nt})$. By considering the former and the latter inequalities, we find that one of the following two possibilities occurs:
\begin{align*}
 (\textrm{a}) \; |I_1|(1-\epsilon)+|I_2|\frac{1+\epsilon}{n-d-1}<|I_1|,
 && 
 (\textrm{b}) \; |I_1|+1 < |I_1|(1-\epsilon)+|I_2|\frac{1+\epsilon}{n-d-1}.
\end{align*}
Suppose that (a) holds. Then, by manipulating the inequality, we obtain that
\[
|I_2|<\epsilon((n-d-1)|I_1|-|I_2|).
\]
This forces $|I_2|=0$, which implies that $I\subseteq\{1,\ldots,d+1\}$. As $W$ intersects the relative interior of $L$, then $x_I(\mathbf{t})=|I|\neq k$. At the same time, $x_I(\mathbf{nt})=|I|(1-\epsilon)$ is arbitrarily close to $x_I(\mathbf{t})=|I|$, hence $\mathbf{t}$ and $\mathbf{nt}$ lie in the same half-space determined by $W\colon x_I=k$, which is a contradiction.

So, (b) must hold instead. After manipulating the inequality, we obtain that
\begin{equation}
\label{eq:inequality-from-b}
\epsilon\left(\frac{|I_2|}{n-d-1}-|I_1|\right)>1-\frac{|I_2|}{n-d-1}.
\end{equation}
The right-hand side of \eqref{eq:inequality-from-b} in non-negative. So,
\[
\left(\frac{|I_2|}{n-d-1}-|I_1|\right)>0\implies |I_1|<\frac{|I_2|}{n-d-1}\leq1.
\]
From this follows that $I_1=\emptyset$ because its cardinality is strictly smaller that one. So, $I=I_2$ and \eqref{eq:inequality-from-b} becomes
\begin{equation}
\label{eq:inequality-from-b-new}
\epsilon\cdot\frac{|I_2|}{n-d-1}>1-\frac{|I_2|}{n-d-1}.
\end{equation}
If the right-hand side of \eqref{eq:inequality-from-b-new} is positive, then
\[
1>\frac{|I_2|}{n-d-1}.
\]
So, $I_2$ is a proper subset of $\{d+2,\ldots,n\}$. Then, by evaluating $x_I(\mathbf{t})=|I_2|\epsilon$ and $x_I(\mathbf{nt})=(1+\epsilon)\frac{|I_2|}{n-d-1}$, we observe that $\mathbf{t}$ and $\mathbf{nt}$ cannot be on opposite sides of $W$ because, since $\epsilon\ll1$ and $\frac{|I_2|}{n-d-1} < 1$, we have that
\[
0<|I_2|\epsilon<(1+\epsilon)\frac{|I_2|}{n-d-1}<1.
\]
So, the right-hand side of \eqref{eq:inequality-from-b-new} is zero. This implies that $I_2=\{d+2,\ldots,n\}$. The last thing is to determine $k$. As $x_I(\mathbf{t})=\epsilon|I|$ and $x_I(\mathbf{nt})=1+\epsilon$, we have that $\mathbf{t}$ and $\mathbf{nt}$ are on opposite sides of the wall $W$ if and only if $k=1$. This shows part (3).

Finally, we show the formula for the weight $\mathbf{w}$ in part (4). Let $\mathbf{w}(u)=(1-u)\cdot\mathbf{t}+u\cdot\mathbf{nt}$ for $u$ in the interval $[0,1]$. We want to find $u_0$ such that $x_I(\mathbf{w}(u_0))=1$, where $I=\{d+2,\ldots,n\}$.

For $d+2\leq i\leq n$, we have that $\mathbf{w}(u)_i=(1-u)\cdot\epsilon+u\cdot\frac{1+\epsilon}{n-d-1}$. So, we have that
\begin{align*}
x_I(\mathbf{w}(u_0))=1\implies&(n-d-1)\left((1-u_0)\cdot\epsilon+u_0\cdot\frac{1+\epsilon}{n-d-1}\right)=1\\
\implies&u_0=\frac{1+\epsilon(d+1-n)}{1+\epsilon(d+2-n)}.
\end{align*}
From this we can compute the coordinates of $\mathbf{w}$ as claimed:

\begin{itemize}

\item If $1\leq i\leq d+1$, then,
\[
w_i=\mathbf{w}(u_0)_i=(1-u_0)\cdot1+u_0\cdot(1-\epsilon)=1 - \epsilon + \frac{\epsilon^2}{1+\epsilon (d +2- n )}.
\]

\item If $d+2\leq i\leq n$, then,
\[
w_i=\mathbf{w}(u_0)_i=(1-u_0)\cdot\epsilon+u_0\cdot\frac{1+\epsilon}{n-d-1}=\frac{1}{n-d-1}.
\]

\end{itemize}
\end{proof}

\begin{remark}
\label{rmk:idea-behind-the-additional-weights}
The proliferation of the weights shown in Figure~\ref{fig:walls-chambers-and-crossing} (namely, the weights $\mathbf{a},\widehat{\mathbf{w}}$, and $\mathbf{h}$) is necessary for technical purposes within the proof of Theorem~\ref{thm:locally-at-e-is-simple-blow-up}~(i). For this proof, we will use Theorem~\ref{thm:wall_crossing_valery}~(3), whose hypotheses require an certain inequality among weights. Our choice of weights is intended to satisfy such condition.
\end{remark}

\begin{lemma}
\label{lem:wall-crossing-in-the-interior-away-from-e}
Let $(\mathbb{P}^d,\mathbf{t}H)$ be a stable pair parametrized by $\overline{\mathrm{LM}}(\mathbb{P}^d,n)$. Then, $(\mathbb{P}^d,\mathbf{nt}H)$ is unstable if and only if $(\mathbb{P}^d,\mathbf{t}H)$ is the stable pair parametrized by the point $e$.
\end{lemma}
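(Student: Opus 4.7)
The plan is to verify stability directly from the definition. Since both pairs have the smooth $\mathbb{P}^d$ as underlying variety, stability reduces to (i) ampleness of $K_{\mathbb{P}^d}+\mathbf{nt}H$ and (ii) log canonicity of the pair. For (i), a direct class computation in $\mathrm{Pic}(\mathbb{P}^d)$ gives $K_{\mathbb{P}^d}+\mathbf{nt}H=\bigl(-(d+1)+(d+1)(1-\epsilon)+(n-d-1)\cdot\tfrac{1+\epsilon}{n-d-1}\bigr)H_{\mathbb{P}^d}=(1-d\epsilon)H_{\mathbb{P}^d}$, ample for $\epsilon<1/d$. So the entire argument hinges on (ii).

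For the direction ``at $e$ implies unstable'', I would apply Lemma~\ref{def:special-point-in-LM-for-lines} to conclude $H_{d+2}=\cdots=H_n$ and observe that the effective coefficient of this common hyperplane under $\mathbf{nt}$ is $(n-d-1)\cdot\tfrac{1+\epsilon}{n-d-1}=1+\epsilon>1$, which immediately violates log canonicity at codimension one.

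For the converse, I argue the contrapositive via the standard criterion for log canonicity of hyperplane arrangements: for every edge $W\subseteq\mathbb{P}^d$ of codimension $c$, the weighted count $\sum_{H_i\supseteq W}(\text{weight})_i$ (with multiplicity) must be at most $c$. Writing $a_W$ for the number of weight-$1$ hyperplanes through $W$ and $b_W$ for the multiplicity-weighted count of weight-$\epsilon$ hyperplanes through $W$, I need $a_W(1-\epsilon)+b_W\tfrac{1+\epsilon}{n-d-1}\leq c$, given the old inequality $a_W+b_W\epsilon\leq c$ coming from $\mathbf{t}$-stability. A case split on $c$ yields the result. When $c=1$: either $a_W=1$, in which case $\mathbf{t}$-log canonicity at codimension one already forces $b_W=0$ (else the effective coefficient $1+\epsilon$ would exceed $1$), so the new coefficient is $1-\epsilon$; or $a_W=0$, in which case the hypothesis ``not at $e$'' yields $b_W\leq n-d-2$, and the new coefficient is at most $(n-d-2)\cdot\tfrac{1+\epsilon}{n-d-1}<1$. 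When $c\geq 2$: either $a_W=c$, forcing $b_W=0$ from the old condition and giving new $=c(1-\epsilon)\leq c$; or $a_W\leq c-1$, in which case the trivial bound $b_W\leq n-d-1$ yields new $\leq(c-1)(1-\epsilon)+1+\epsilon=c-(c-2)\epsilon\leq c$.

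The hard part will be the codimension-one analysis, which relies on two combinatorial consequences of $\mathbf{t}$-log canonicity: first, that no weight-$\epsilon$ hyperplane can coincide with a weight-$1$ one, and second, that ``not at $e$'' precisely means that no single weight-$\epsilon$ divisor absorbs all of $H_{d+2},\ldots,H_n$. The borderline case $c=2$, $a_W=1$, $b_W=n-d-1$ saturates the new bound at equality, which is admissible for log canonicity.
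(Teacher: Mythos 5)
Your proposal is correct and follows essentially the same route as the paper: reduce to the numerical log canonicity inequality at each stratum, combine the old inequality from $\mathbf{t}$-stability with the two combinatorial facts (no light hyperplane coincides with a heavy one; not all light hyperplanes coincide unless the pair is the one at $e$), and case-split to verify the new inequality. Your cases are organized by the codimension $c$ rather than by $(m_1,m_2)$ as in the paper, and you additionally write out the ampleness computation that the paper leaves implicit, but these are cosmetic differences.
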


\begin{proof}
If $(\mathbb{P}^d,\mathbf{t}H)$ is parametrized by $e$, then $(\mathbb{P}^d,\mathbf{nt}H)$ is unstable because the sum of the weights of $H_{d+2},\ldots,H_n$ is strictly larger than $1$. So, let us focus on the converse.

Suppose the stable pair $(\mathbb{P}^d,\mathbf{t}H)$ is not parametrized by $e$ and let us prove that $(\mathbb{P}^d,\mathbf{nt}H)$ is stable. For this, it will be enough to check that $(\mathbb{P}^d,\mathbf{nt}H)$ is log canonical. Let $I\subseteq\{1,\ldots,n\}$ be such that $\cap_{i\in I}H_i\neq\emptyset$ and define $m_1:=|I\cap\{1,\ldots,d+1\}|$, $m_2:=|I\cap\{d+2,\ldots,n\}|$. That is, $m_1$ counts the number ``heavy'' hyperplanes with respect to $\mathbf{t}$ while $m_2$ counts the number of ``light'' ones.

By \cite[\S\,1]{Ale08}, the log canonicity of $(\mathbb{P}^d,\mathbf{t}H)$ implies that
\begin{equation}
\label{eq:inequality-assumption}
m_1+\epsilon m_2\leq\mathrm{Codim}_{\mathbb{P}^d}(\cap_{i\in I}H_i)=:c
\end{equation}
and we want to show that
\begin{equation}
\label{eq:inequality-to-verify}
(1-\epsilon)m_1+\frac{1+\epsilon}{n-d-1}m_2\leq c.
\end{equation}
We have the following cases:
\begin{itemize}

\item If $m_2=0$, then
\[
(1-\epsilon)m_1+\frac{1+\epsilon}{n-d-1}m_2=(1-\epsilon)m_1\leq m_1+\epsilon m_2\leq c.
\]

\item If $m_1=0$ and $m_2<n-d-1$, then
\[
(1-\epsilon)m_1+\frac{1+\epsilon}{n-d-1}m_2=\frac{1+\epsilon}{n-d-1}m_2\leq1\leq c,
\]
where the first inequality above holds true because $\epsilon\ll1$.

\item If $m_1=0$ and $m_2=n-d-1$, then
\[
(1-\epsilon)m_1+\frac{1+\epsilon}{n-d-1}m_2=1+\epsilon\leq c,
\]
where the last inequality holds because, otherwise, $H_{d+2}=\ldots=H_n$, which is excluded by our hypothesis.

\item We now assume $m_1,m_2>0$. By \eqref{eq:inequality-assumption} we have that $m_1\leq c-1$. Therefore,
\[
(1-\epsilon)m_1+\frac{1+\epsilon}{n-d-1}m_2\leq
(1-\epsilon)(c-1)+1+\epsilon=c+\epsilon(2-c),
\]
which is less than or equal to $c$ provided $c\geq2$. To show the latter, assume by contradiction that $c=1$. Then, the hyperplanes $H_i$, $i\in I$, must coincide. In particular, a hyperplane $H_i$ with $i\leq d+1$ would be equal to a hyperplane $H_j$ with $j\geq d+2$. This contradicts the fact that $(\mathbb{P}^d,\mathbf{t}H)$ is log canonical.
\end{itemize}
In each one of the cases above, we verified \eqref{eq:inequality-to-verify}, which concludes the proof.
\end{proof}

Now that we realized that the pair $(\mathbb{P}^d,\mathbf{t}H)$ parametrized by $e$ is unstable for the weight vector $\mathbf{nt}$, we construct some degenerate stable hyperplane arrangements with respect to $\mathbf{nt}$ that, as we will show, are naturally parametrized by the exceptional divisor of $\Bl_e\overline{\mathrm{LM}}(\mathbb{P}^d,n)$.

\begin{definition}
\label{def:description-of-the-new-fibers}
Let $Y,C_1,\ldots,C_n$ be as follows:
\begin{enumerate}

\item $Y$ is the transverse gluing of $Y_1$ and $Y_2$, where $Y_1$ is a copy of $\mathbb{P}^d$ and $Y_2$ is a copy of the blow up of $\mathbb{P}^d$ at a single point $p$. The surfaces are glued along a hyperplane $D_1\subseteq Y_1$ and the exceptional divisor $D_2\subseteq Y_2$.

\item $C_1,\ldots,C_{d+1}$ restricted to $Y_1$ give $d+1$ hyperplanes which, together with $D_1$, are in general linear position.

\item $C_1,\ldots,C_{d+1}$ restricted to $Y_2$ give the strict transform of $d+1$ distinct hyperplanes in $\mathbb{P}^d$ intersecting the exceptional divisor $D_2\cong\mathbb{P}^{d-1}$ into $d+1$ linearly general hyperplanes.

\item $C_{d+2},\ldots,C_n$ restricted to $Y_2$ give sections of the fibration $Y_2\rightarrow\mathbb{P}^{d-1}$ which are disjoint from $D_2$ and at most $n-d-2$ of these sections can coincide.

\end{enumerate}
For $d=2$, the reducible surface $Y$ together with the broken lines $C_1,\ldots,C_n$ is depicted in Figure~\ref{fig:stable-line-arrangement-after-blow-up}.
\end{definition}

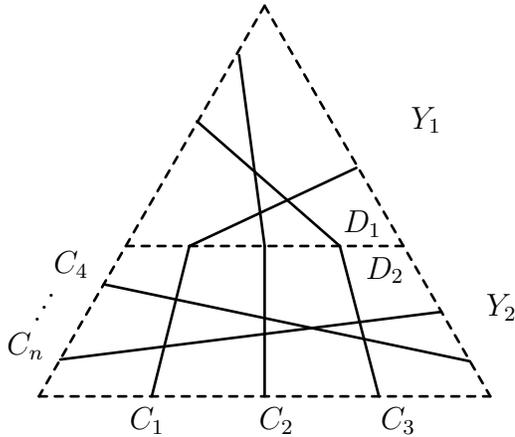
\begin{figure}
\begin{tikzpicture}[line cap=round,line join=round,>=triangle 45,x=1.0cm,y=1.0cm]
\clip(-4.3,-1) rectangle (11.48,6.3);
\draw [dashed,line width=1.0pt] (3.,5.196152422706633)-- (0.,0.);
\draw [dashed,line width=1.0pt] (3.,5.196152422706633)-- (6.,0.);
\draw [dashed,line width=1.0pt] (0.,0.)-- (6.,0.);
\draw [dashed,line width=1.0pt] (1.1547005383792517,2.)-- (4.845299461620749,2.);
\draw [line width=1.0pt] (2.,2.)-- (1.5,0.);
\draw [line width=1.0pt] (3.,2.)-- (3.,0.);
\draw [line width=1.0pt] (4.,2.)-- (4.52,0.);
\draw [line width=1.0pt] (0.8608587988004848,1.491051177665153)-- (5.730814157129579,0.46624355652982175);
\draw [line width=1.0pt] (0.2828460969082651,0.4899038105676654)-- (5.3507437415779595,1.1245448267190437);
\draw [line width=1.0pt] (2.,2.)-- (4.02 + 0.2,2.84 + 0.2);
\draw [line width=1.0pt] (4.,2.)-- (2.32 - 0.22 ,3.46 + 0.2);
\draw [line width=1.0pt] (3.,2.)-- (2.76 - 0.1 , 4.04 +0.5);
\draw (1.06,0) node[anchor=north west] {$C_1$};
\draw (2.78,0) node[anchor=north west] {$C_2$};
\draw (4.4,0) node[anchor=north west] {$C_3$};
\draw (0.2-0.15,2.22-0.15) node[anchor=north west] {$C_4$};
\draw (-0.42-0.15,1.14-0.15) node[anchor=north west] {$C_n$};
\draw (-0.06-0.15,1.76-0.15) node[anchor=north west] {$ \rotatebox[origin=c]{60}{\dots}$};
\draw (4.3,2.3) node {$D_1$};
\draw (4.2,2) node[anchor=north west] {$D_2$};
\draw (4.8,4) node[anchor=north west] {$Y_1$};
\draw (5.8,1.5) node[anchor=north west] {$Y_2$};
\end{tikzpicture}
\caption{The reducible surface $Y$ with the broken lines $C_1,\ldots,C_n$ described in Definition~\ref{def:description-of-the-new-fibers}.}
\label{fig:stable-line-arrangement-after-blow-up}
\end{figure}

\begin{lemma}
\label{lem:stability-pair-for-weights-nt}
Let $Y,C_1,\ldots,C_n$ as in Definition~\ref{def:description-of-the-new-fibers}. Then, the pair $(Y,\mathbf{nt}C)$ is stable.
\end{lemma}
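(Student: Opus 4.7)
The plan is to verify the two defining conditions of stability for $(Y,\mathbf{nt}C)$: semi-log canonical singularities and ampleness of $K_Y+\mathbf{nt}C$. Since $Y=Y_1\cup Y_2$ is the transverse gluing along $D_1=D_2$ of two smooth normal components, with conductor $D_1\sqcup D_2$ on the normalization $Y^{\nu}=Y_1\sqcup Y_2$, the slc property reduces to log canonicity of each pair $(Y_i,D_i+\mathbf{nt}(C|_{Y_i}))$, while ampleness of $K_Y+\mathbf{nt}C$ reduces to ampleness on each component. I will then treat the two components separately.

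On $Y_1=\mathbb{P}^d$ the boundary is a configuration of $d+2$ hyperplanes in linear general position, where $D_1$ has coefficient $1$ and the other $d+1$ have coefficient $1-\epsilon$. Log canonicity is immediate from the bound $1+(d-1)(1-\epsilon)<d$ on the coefficient sum at any point, and the direct computation $K_{Y_1}+D_1+(1-\epsilon)\sum_{i=1}^{d+1}(C_i|_{Y_1})=(1-(d+1)\epsilon)H$ shows ampleness for $0<\epsilon<\tfrac{1}{d+1}$.

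On $Y_2=\Bl_p\mathbb{P}^d$ I will use the basis $H,E$ of $\mathrm{Pic}(Y_2)$ with $E=D_2$, so that the strict transforms $E_i=C_i|_{Y_2}$ have class $H-E$ and the sections $S_j=C_{d+1+j}|_{Y_2}$ have class $H$. A direct computation using $K_{Y_2}=-(d+1)H+(d-1)E$ gives
\[
K_{Y_2}+D_2+(1-\epsilon)\sum_{i=1}^{d+1}E_i+\tfrac{1+\epsilon}{n-d-1}\sum_{j=1}^{n-d-1}S_j=\epsilon H+(1-(d+1)\epsilon)(H-E),
\]
which for $0<\epsilon<\tfrac{1}{d+1}$ lies in the interior of the nef cone spanned by the base-point free classes $H$ and $H-E$, hence is ample. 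For log canonicity I split into two cases: on $D_2$ no section passes through $q$ and the linear generality of the traces $E_i\cap D_2$ on $D_2\cong\mathbb{P}^{d-1}$ forces at most $d-1$ of the $E_i$'s to meet $D_2$ at $q$, yielding an snc configuration of coefficient sum at most $1+(d-1)(1-\epsilon)<d$; off $D_2$ I identify $Y_2\setminus D_2\cong\mathbb{P}^d\setminus\{p\}$ and apply the hyperplane-arrangement criterion of \cite[\S\,1]{Ale08}.

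The main obstacle will be this last case analysis, since the sections $S_j$ need only be disjoint from $D_2$ and obey the coincidence bound, so they may lie in arbitrary position relative to the $E_i$'s and to one another. The essential inputs are the bound $(d-1)(1-\epsilon)$ on the $E_i$-contribution at any point off $D_2$, the ceiling $1+\epsilon$ on the total $S_j$-contribution, and the bound $\frac{(n-d-2)(1+\epsilon)}{n-d-1}<1$ on the coefficient carried by any single coincident section divisor; these should suffice to verify the Alexeev inequality for every relevant subset $I$ of divisors through a point, using that any two distinct hyperplanes in $\mathbb{P}^d$ meet in codimension two.
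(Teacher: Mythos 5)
Your proposal is correct and follows essentially the same route as the paper: reduce to the two components, compute that the log canonical divisor restricts to $(1-(d+1)\epsilon)H$ on $Y_1$ and to $(1-d\epsilon)H+((d+1)\epsilon-1)E$ on $Y_2$, and verify positivity there (your nef-cone argument on $\Bl_p\mathbb{P}^d$ is the dual formulation of the paper's intersection computation with the curve classes $e$, $f$, $s$ via Kleiman's criterion). If anything, you supply more detail than the paper on the log canonicity side, which the paper merely asserts to follow from the definitions; your case analysis via the Alexeev inequality does go through with the bounds you list.
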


\begin{proof}
It follows from the definition of $\mathbf{nt}$ and Definition~\ref{def:description-of-the-new-fibers} that $K_Y+\mathbf{nt}C$ is $\mathbb{Q}$-Cartier and that the pairs $(Y_1,D_1+\mathbf{nt}C|_{Y_1})$ and $(Y_2,D_2+\mathbf{nt}C|_{Y_2})$ are log canonical. So, we only have to check that the divisors $D_1+K_{Y_1}+\mathbf{nt}C|_{Y_1}$ and $D_2+K_{Y_2}+\mathbf{nt}C|_{Y_2}$ are ample.

If $H\subseteq Y_1$ denotes a generic hyperplane, then
\[
D_1+K_{Y_1}+\mathbf{nt}C|_{Y_1}\sim_{\mathbb{Q}}H-(d+1)H+(d+1)(1-\epsilon)H=(1-(d+1)\epsilon)H,
\]
which is ample because $\epsilon\ll1$. So, let us focus on the second component $Y_2$. Let $e\subseteq Y_2$ be any line contained in the exceptional divisor $D_2$. Let $f\subseteq Y_2$ denote the strict transform of a line in $\mathbb{P}^d$ passing through $p$. Finally, let $s\subseteq Y_2$ be the strict transform of a line in $\mathbb{P}^d$ disjoint from $p$. To prove that $D_2+K_{Y_2}+\mathbf{nt}C|_{Y_2}$ is ample, it is sufficient to prove that it intersects positively with the curves $e,f,s$ (this is a combination of Kleiman's ampleness criterion \cite[Theorem~1.18]{KM98} with the fact that in a toric variety the cone of pseudoeffective curves is generated by the boundary curves \cite[Theorem~6.3.20~(b)]{CLS11}). Let $H\subseteq Y_2$ (resp. $R\subseteq Y_2$) be the strict transform of a hyperplane in $\mathbb{P}^d$ not passing through $p$ (resp. passing through $p$). We have that $K_{Y_2}\sim-(d+1)H+(d-1)D_2$ by \cite[Page~187]{GH94} and $R\sim H-D_2$ by \cite[Page~605]{GH94}. 
Moreover, $C|_{Y_2}$ is the sum of the divisors 
$(C_1 + \ldots + C_{d+1})|_{Y_2}$ which is linearly equivalent to $(d+1)R$
and 
$(C_{d+2} + \ldots+ C_n)|_{Y_2}$ which is linearly equivalent to $(n-d-1)H$.
Therefore,
\begin{align*}
D_2+K_{Y_2}+C|_{Y_2} \sim_{\mathbb{Q}}D_2+{}&(-(d+1)H+(d-1)D_2)\\
+{}&\left((d+1)(1-\epsilon)(H-D_2)+(n-d-1)\frac{1+\epsilon}{n-d-1}H\right)\\
={}&(1-\epsilon d)H+(\epsilon(1+d)-1)D_2.
\end{align*}
From the geometry of $Y_2$ we can argue that
\begin{align*}
H\cdot e & =0 &&  H\cdot f =1 && H\cdot s =1
\\
D_2\cdot e &= -1 &&  D_2\cdot f =1 && D_2\cdot s =0,
\end{align*}
from which we obtain that
\begin{align*}
\left( D_2+K_{Y_2}+C|_{Y_2}\right)\cdot e &= 1-\epsilon(1+d)
\\
\left( D_2+K_{Y_2}+C|_{Y_2} \right)\cdot f &= \epsilon
\\
\left( D_2+K_{Y_2}+C|_{Y_2} \right)\cdot s &= 1-\epsilon d,
\end{align*}
which are all positive for $0<\epsilon\ll1$.
\end{proof}

\subsection{Proof of Theorem~\ref{thm:locally-at-e-is-simple-blow-up}}
We are now ready to assemble all our work together.

\begin{proof}[Proof of part~(i)]
We show that  there is a birational morphism $\overline{\mathrm{M}}_{\mathbf{nt}}(\mathbb{P}^d,n)\rightarrow\overline{\mathrm{LM}}(\mathbb{P}^d,n)$.
The strategy is defining auxiliary weight vectors $\mathbf{a},\widehat{\mathbf{w}}$, and $\mathbf{h}$ for which we can use Theorem~\ref{thm:wall_crossing_valery}. Let us define
\[
\mathbf{a}:=\left(\underbrace{1-\frac{1}{d+1}+\widehat{\epsilon},\ldots,1-\frac{1}{d+1}+\widehat{\epsilon}}_{(d+1)\textrm{-times}},\epsilon,\ldots,\epsilon\right)\in\mathbb{Q}^n,
\]
where
\[
\frac{1}{d+1}-\frac{(n-d-1)\epsilon}{d+1}<\widehat{\epsilon}<\frac{1}{d+1}.
\]
With this definition, $\sum_ia_i>d+1$, so that we can consider the moduli space of stable hyperplane arrangements $\overline{\mathrm{M}}_{\mathbf{a}}(\mathbb{P}^d,n)$. We observe that $\mathbf{a}$ is not contained in any wall of the chamber decomposition of $\mathcal{D}(d+1,n)$ and that it can be chosen to be arbitrarily close to $\mathbf{t}$, 
so $\mathbf{t}$ is in the closure of the chamber containing $\mathbf{a}$. 
As we also have that $\mathbf{a}\leq\mathbf{t}$, then we can conclude by Theorem~\ref{thm:wall_crossing_valery}~(3) that $\overline{\mathrm{LM}}(\mathbb{P}^d,n)=\overline{\mathrm{M}}_{\mathbf{a}}(\mathbb{P}^d,n)$.

We now define
\[
\widehat{\mathbf{w}}=\left(1-\frac{1}{d+1}+\widehat{\epsilon},\ldots,1-\frac{1}{d+1}+\widehat{\epsilon},\frac{1}{n-d-1},\ldots,\frac{1}{n-d-1}\right),
\]
which lies on the wall $x_{d+2}+\ldots+x_n=1$. As there is no wall between $\mathbf{a}$ and $\widehat{\mathbf{w}}$ (this can be shown using ideas analogous to the ones used in Lemma~\ref{lemma:Wall_t_nt}~(3)), $\widehat{\mathbf{w}}$ is in the closure of the chamber containing $\mathbf{a}$. Moreover, $\mathbf{a}\leq\widehat{\mathbf{w}}$. So, by Theorem~\ref{thm:wall_crossing_valery}~(3) we have that $\overline{\mathrm{M}}_{\mathbf{a}}(\mathbb{P}^d,n)=\overline{\mathrm{M}}_{\widehat{\mathbf{w}}}(\mathbb{P}^d,n)$.

We notice that the weight vectors $\widehat{\mathbf{w}}$ and $\mathbf{w}$ can be chosen to be arbitrarily close for $\epsilon\ll1$, so there is no wall between them. Hence $\overline{\mathrm{M}}_{\widehat{\mathbf{w}}}(\mathbb{P}^d,n)=\overline{\mathrm{M}}_{\mathbf{w}}(\mathbb{P}^d,n)$.

Let $\mathbf{h}$ be a weight vector in the relative interior of the segment joining $\widehat{\mathbf{w}}$ and $\mathbf{nt}$. Since $\widehat{\mathbf{w}}\leq\mathbf{nt}$ (for this, we need that $\widehat{\epsilon}<\frac{1}{d+1}$), we have that $\mathbf{h}\leq\mathbf{nt}$. Since $\widehat{\mathbf{w}}$ and $\mathbf{w}$ are arbitrarily close, there is no wall between $\widehat{\mathbf{w}}$ and $\mathbf{nt}$. Therefore, $\mathbf{nt}$ is in the closure of the chamber containing $\mathbf{h}$. It follows by Theorem~\ref{thm:wall_crossing_valery}~(3) that $\overline{\mathrm{M}}_{\mathbf{nt}}(\mathbb{P}^d,n)=\overline{\mathrm{M}}_{\mathbf{h}}(\mathbb{P}^d,n)$.

Finally, $\mathbf{w}$ is in the closure of the chamber containing $\mathbf{h}$, which by Theorem~\ref{thm:wall_crossing_valery}~(2) guarantees the existence of a birational morphism $\overline{\mathrm{M}}_{\mathbf{h}}(\mathbb{P}^d,n)\rightarrow\overline{\mathrm{M}}_{\mathbf{w}}(\mathbb{P}^d,n)$.

Summarizing the considerations above, we have that
\[
\overline{\mathrm{M}}_{\mathbf{nt}}(\mathbb{P}^d,n)=\overline{\mathrm{M}}_{\mathbf{h}}(\mathbb{P}^d,n)\rightarrow\overline{\mathrm{M}}_{\mathbf{w}}(\mathbb{P}^d,n)=\overline{\mathrm{M}}_{\widehat{\mathbf{w}}}(\mathbb{P}^d,n)=\overline{\mathrm{M}}_{\mathbf{a}}(\mathbb{P}^d,n)=\overline{\mathrm{LM}}(\mathbb{P}^d,n),
\]
which gives the claimed birational morphism.
\end{proof}

\begin{proof}[Proof of part (ii)]
Consider the birational morphism $\overline{\mathrm{M}}_{\mathbf{nt}}(\mathbb{P}^d,n)\rightarrow\overline{\mathrm{LM}}(\mathbb{P}^d,n)$ from part~(i). This is an isomorphism over $\mathrm{LM}(\mathbb{P}^d,n)\setminus\{e\}$ by Lemma~\ref{lem:wall-crossing-in-the-interior-away-from-e}. Then, denote by
\[
\varphi\colon\Bl_e\mathrm{LM}(\mathbb{P}^d,n)\dashrightarrow\mathcal{U}_{\mathbf{nt}}(\mathbb{P}^d,n)
\] 
the isomorphism away from the exceptional divisor $\mathbb{E}_n\subseteq\Bl_e\mathrm{LM}(\mathbb{P}^d,n)$. We have that $\varphi$ extends to a morphism $\overline{\varphi}\colon\Bl_e\mathrm{LM}(\mathbb{P}^d,n)\rightarrow\mathcal{U}_{\mathbf{nt}}(\mathbb{P}^d,n)$.

To prove this, we use \cite[Lemma~5.3]{GPSZ24} (which is a slight generalization of \cite[Lemma~3.18]{AET23} and \cite[Theorem~7.3]{GG14}). Consider the rational map $\Bl_e\overline{\mathrm{LM}}(\mathbb{P}^d,n)\dashrightarrow\overline{\mathrm{M}}_{\mathbf{nt}}(\mathbb{P}^d,n)$. Let $U\subseteq\Bl_e\overline{\mathrm{LM}}(\mathbb{P}^d,n)$ be the open subset given by $\Bl_e\mathrm{LM}(\mathbb{P}^d,n)\setminus\mathbb{E}_n$. Let $x\in\mathbb{E}_n$ be arbitrary. Let $R$ be a DVR with maximal ideal $\mathfrak{m}$ and let $K$ be its field of fractions. Let $g\colon\mathrm{Spec}(K)\rightarrow U$ which extends to $\overline{g}\colon\mathrm{Spec}(R)\rightarrow\Bl_e\overline{\mathrm{LM}}(\mathbb{P}^d,n)$ with $\overline{g}(\mathfrak{m})=x$. Let $V\subseteq\Bl_e\overline{\mathrm{LM}}(\mathbb{P}^d,n)$ be the open subset given by $\Bl_e\mathrm{LM}(\mathbb{P}^d,n)$, which contains $U$. Let $f:=\varphi\circ g$ and let $\overline{f}$ be its unique extension to an $R$-point of $\overline{\mathrm{M}}_{\mathbf{nt}}(\mathbb{P}^d,n)$, which by continuity has to lie in $\mathcal{U}_{\mathbf{nt}}(\mathbb{P}^d,n)$. Let us show that $y:=\overline{f}(\mathfrak{m})$ only depends on $x$ and not on the choice of $g$. This implies that $\varphi$ extends to a birational morphism $\overline{\varphi}\colon V=\Bl_e\mathrm{LM}(\mathbb{P}^d,n)\rightarrow\mathcal{U}_{\mathbf{nt}}(\mathbb{P}^d,n)$.

Consider the family $(\mathcal{X},\mathcal{B}_1,\ldots,\mathcal{B}_n)\rightarrow\overline{\mathrm{LM}}(\mathbb{P}^d,n)$ from Theorem~\ref{thm:family-over-moduli-weighted-hyperplane-arrangements}. Denote by $\pi$ the blow up morphism $\Bl_e\overline{\mathrm{LM}}(\mathbb{P}^d,n)\rightarrow\overline{\mathrm{LM}}(\mathbb{P}^d,n)$. Then consider the one-parameter family
\begin{equation}
\label{eq:one-parameter-family-to-modify}
(\mathcal{X}',\mathcal{C}_1,\ldots,\mathcal{C}_n):=(\pi\circ \overline{g})^*(\mathcal{X},\mathcal{B}_1,\ldots,\mathcal{B}_n)\rightarrow\mathrm{Spec}(R),
\end{equation}
whose central fiber $(\mathbb{P}^d,\mathbf{t}H)$ is the one described in Lemma~\ref{def:special-point-in-LM-for-lines} (for $d=2$, this is depicted in Figure~\ref{fig:stable-pair-parametrized-by-e-LM-lines}). The point $y$ parametrizes the stable replacement of the central fiber of the one-parameter family in \eqref{eq:one-parameter-family-to-modify} with respect to the weight vector $\mathbf{nt}$.

To compute this stable replacement, let $Z_1\subseteq\mathcal{X}'$ be the closed subset of the central fiber where the hyperplanes $H_{d+2},\ldots,H_n$ coincide. Let $\mathcal{X}_1':=\Bl_{Z_1}\mathcal{X}'$ and denote by $E_1$ its exceptional divisor, which is isomorphic to the blow up of $\mathbb{P}^d$ at one point. For $i=1,\ldots,n$, let $\widehat{\mathcal{C}}_i$ be the strict transform of $\mathcal{C}_i$ with respect to the blow up $\mathcal{X}_1'\rightarrow\mathcal{X}'$. The restrictions $\widehat{\mathcal{C}}_{d+2}|_{E_1},\ldots,\widehat{\mathcal{C}}_n|_{E_1}$ may still be all equal or not. If not, define $Z_2$ to be the hyperplane where these strict transforms coincide and we iterate this construction. This gives rise to a sequence of blow ups $\mathcal{X}_s'\rightarrow\ldots\rightarrow\mathcal{X}_1'\rightarrow\mathcal{X}'$ where the strict transforms of $\mathcal{C}_{d+2},\ldots,\mathcal{C}_n$ on $E_s$ have at least two irreducible components for the first time. This procedure terminates because after every blow up the multiplicity of intersection of $\widehat{\mathcal{C}}_{d+2},\ldots\widehat{\mathcal{C}}_{n}$ along the central fiber decreases. The central fiber of $\mathcal{X}_s'\rightarrow\mathrm{Spec}(R)$ for $d=2$ is depicted in Figure~\ref{fig:stable-replacement-before-contraction}.

If $s=1$, then this is the stable replacement by Lemma~\ref{lem:stability-pair-for-weights-nt}. If $s\geq2$, then, to obtain the stable replacement, we need to contract the components $E_1,\ldots,E_{s-1}$ of the central fiber. The reason is the following. Let $E_h$ be one of these components, which is isomorphic to the blow up of $\mathbb{P}^d$ at one point $p$. Let $f\subseteq E_h$ be the strict transform of a line passing through $p$ and $D_h\subseteq E_h$ the conductor divisor. The conductor $D_h$ has two connected components given by the exceptional divisor $E\subseteq E_h$ and the strict transform $H$ of a hyperplane not passing through $p$. Then, $f$ intersects the log canonical divisor giving $0$:
\[
(K_{E_h}+D_h+(1-\epsilon)(\widehat{\mathcal{C}}_1|_{E_h}+\ldots+\widehat{\mathcal{C}}_{k+1}|_{E_h}))\cdot f=-2+2+0=0.
\]
To compute $K_{E_h}\cdot f=-2$ we used that $K_{E_h}=(-d-1)H+(d-1)E$. So, $K_{E_h}\cdot f=(-d-1)+(d-1)=-2$. Therefore, we can conclude that there exists a birational morphism $\mathcal{X}_k'\rightarrow\mathcal{Y}$ contracting $E_1,\ldots,E_{s-1}$ along their $\mathbb{P}^1$-fibrations. Still denote by $\widehat{\mathcal{C}}_1,\ldots,\widehat{\mathcal{C}}_n$ the strict transforms of $\mathcal{C}_1,\ldots,\mathcal{C}_n$ on $\mathcal{Y}$. The central fiber of $(\mathcal{Y},\widehat{\mathcal{C}}_1,\ldots,\widehat{\mathcal{C}}_n)\rightarrow\mathrm{Spec}(R)$ is then as described in Definition~\ref{def:description-of-the-new-fibers} (see Figure~\ref{fig:stable-line-arrangement-after-blow-up} for the case $d=2$), which is stable for the weight vector $\mathbf{nt}$ by Lemma~\ref{lem:stability-pair-for-weights-nt}.

We now need to show that the central fiber of $(\mathcal{Y},\widehat{\mathcal{C}}_1,\ldots,\widehat{\mathcal{C}}_n)\rightarrow\mathrm{Spec}(R)$, hence the point $y$, is independent of the choice of $g$ and only depends on $x$. The central fiber of $(\mathcal{Y},\widehat{\mathcal{C}}_1,\ldots,\widehat{\mathcal{C}}_n)\rightarrow\mathrm{Spec}(R)$ consists of $X_0'\cong\mathbb{P}^d$, which is the strict transform of the central fiber $X_0\subseteq\mathcal{X}$, glued along a hyperplane $H_0$ with the exceptional divisor $D_s\subseteq E_s$. The isomorphism type of the central fiber of $\mathcal{Y}\rightarrow\mathrm{Spec}(R)$ only depends on the pair
\begin{equation}
\label{eq:tail-of-the-stable-replacement}
\left(E_s,D_s+(1-\epsilon)(\widehat{\mathcal{C}}_1|_{E_s}+\ldots+\widehat{\mathcal{C}}_{d+1}|_{E_s})+\frac{1+\epsilon}{n-d-1}\left(\widehat{\mathcal{C}}_{d+2}|_{E_s}+\ldots+\widehat{\mathcal{C}}_n|_{E_s}\right)\right),
\end{equation}
because the component $(X_0',H_0+(1-\epsilon)(\widehat{\mathcal{C}}_1|_{X_0'}+\ldots+\widehat{\mathcal{C}}_{d+1}|_{X_0'}))$ is unique up to automorphisms, as the divisor is supported on $d+2$ linearly general hyperplanes in $\mathbb{P}^d$. The stable pair in \eqref{eq:tail-of-the-stable-replacement} depends on how the sections $\mathcal{C}_{d+2},\ldots,\mathcal{C}_n$ collide in the central fiber of $\mathcal{Y}\rightarrow\mathrm{Spec}(R)$. But this is prescribed by $x\in\mathbb{E}_n$ and not by $g$. We now elaborate on this.

The limiting behavior can be locally described as follows. Let $\Delta=\mathrm{Spec}(\mathbb{C}[[t]])$. Consider the following one-parameter family of hyperplanes in affine space:
\[
L(t)=V(a_0(t)+a_1(t)x_1+\ldots+a_d(t)x_d)\subseteq\mathbb{A}_{(x_1,\ldots,x_d)}^d\times\Delta,
\]
where $a_0(t),\ldots,a_d(t)\in\mathbb{C}[[t]]$, $a_1(0)\neq0$, and $a_0(0),a_2(0),\ldots,a_d(0)=0$. Therefore, for $t\to0$, the limiting hyperplane is $Y:=V(x_1,t)\subseteq\mathbb{A}^d\times\Delta$. Let us consider the blow up
\[
\Bl_Y(\mathbb{A}^d\times\Delta).
\]
Denote by $\widehat{L}(t)$ the strict transform of $L(t)$ and by $E$ the exceptional divisor of the blow up. Let us compute $\widehat{L}(0)\subseteq E$.

First, we give equations for $\Bl_Y(\mathbb{A}^d\times\Delta)$. As $Y=V(x_1,t)\subseteq\mathbb{A}^d\times\Delta$, we have that $\Bl_Y(\mathbb{A}^d\times\Delta)$ can be viewed as the closed subvariety of $(\mathbb{A}^d\times\Delta)\times\mathbb{P}_{[s_0:s_1]}^1$ given by $x_1s_1-ts_0=0$. The exceptional divisor $E$ is described by $x_1=t=0$, so $((x_2,\ldots,x_d),[s_0:s_1])\in\mathbb{A}^{d-1}\times\mathbb{P}^1$ are free to vary, revealing the $\mathbb{P}^1$-bundle structure of $E$. More explicitly, we can write that
\[
E=\{((0,x_2,\ldots,x_d),0,[s_0:s_1])\in(\mathbb{A}^d\times\Delta)\times\mathbb{P}^1\mid (x_2,\ldots,x_d)\in\mathbb{A}^{d-1},~[s_0:s_1]\in\mathbb{P}^1\}.
\]

Now, to compute $\widehat{L}(0)\subseteq E$ it suffices to look in the affine patch $s_1\neq0$, where we set $\sigma_0=s_0/s_1$. By substituting $x_1=t\sigma_0$ in the defining equation of $L(t)$ we obtain the pullback of $L(t)$ with respect to the blow up restricted to $s_1 \neq 0$, that is 
\[
 a_0(t)+a_1(t)t\sigma_0+a_2(t)x_2+\ldots+a_d(t)x_d=0.
\]
From the above expression, we obtain the strict transform and the exceptional divisor as follows
\[
t\left(\frac{a_0(t)}{t}+a_1(t)\sigma_0+\frac{a_2(t)}{t}x_2+\ldots+\frac{a_d(t)}{t}x_d\right)=0.
\]
The vanishing of $t$ describes the exceptional divisor. Hence, the strict transform $\widehat{L}(0)$ is given by
\[
\lim_{t\to0}\left(\frac{a_0(t)}{t}+a_1(t)\sigma_0+\frac{a_2(t)}{t}x_2+\ldots+\frac{a_d(t)}{t}x_d\right)=0.
\]
By algebraic manipulations of the above limit, we obtain that
\[
\sigma_0=-\frac{1}{a_1(0)}\lim_{t\to0}\left(\frac{a_0(t)}{t}+\frac{a_2(t)}{t}x_2+\ldots+\frac{a_d(t)}{t}x_d\right)=-\frac{1}{a_1(0)}K_0.
\]
The limit $K_0$ exists because, recall, $a_0(t),a_2(t),\ldots,a_d(t)\in\mathbb{C}[[t]]$ vanish at $t=0$. Therefore,
\[
\widehat{L}(0)=\left\{\left((0,x_2,\ldots,x_d),0,\left[K_0:-a_1(0)\right]\right)\in(\mathbb{A}^d\times\Delta)\times\mathbb{P}^1\,\bigg|\,(x_2,\ldots,x_d)\in\mathbb{A}^{d-1}\right\}\subseteq E.
\]
So, $\widehat{L}(0)$ is a section of the $\mathbb{P}^1$-bundle $E$, and we can see it only depends on $a_1(0)$ and first order information about $a_0(t),a_2(t),\ldots,a_d(t)$. These only depend on $x\in\mathbb{E}_n$ and not on the choice of one-parameter family limiting to it.

Finally, let us prove that $\overline{\varphi}$ is finite. The morphism $\overline{\varphi}$ is injective away from $\mathbb{E}_n$, so we only need to prove finiteness over the divisor $\mathbb{E}_n$. The locus in $\overline{\mathrm{M}}_{\mathbf{nd}}(\mathbb{P}^2,n)$ parametrizing the isomorphism classes of stable pairs as in Figure~\ref{fig:stable-line-arrangement-after-blow-up} gives a divisor. This is because, after contracting the $\mathbb{P}^d$-component of the degeneration, we have a hyperplane arrangement such that $C_1\cap\ldots\cap C_{d+1}$ equals one point, which is a codimension one condition. So, $\overline{\varphi}$ is generically finite on $\mathbb{E}_n$ and we only need to show that $\overline{\varphi}$ does not contract any curve. But this is true because $e$ is a smooth point of $\overline{\mathrm{LM}}(\mathbb{P}^d,n)$, hence $\mathbb{E}_n$ is isomorphic to $\mathbb{P}^{N-1}$, where $N:=d(n-d-2)=\dim\overline{\mathrm{LM}}(\mathbb{P}^d,n)$. So, if a curve is contracted, the whole $\mathbb{E}_n$ is contracted, which is impossible as $\overline{\varphi}|_{\mathbb{E}_n}$ is generically finite. This proves quasi-finiteness of $\overline{\varphi}$. Next, we need to show that $\overline{\varphi}$ is proper. Consider the following composition:
\[
\Bl_e\mathrm{LM}(\mathbb{P}^d,n)\xrightarrow{\overline{\varphi}}\mathcal{U}_{\mathbf{nt}}(\mathbb{P}^d,n)\rightarrow\mathrm{LM}(\mathbb{P}^d,n).
\]
The above composition is proper because it is the base change of the proper morphism $\Bl_e\overline{\mathrm{LM}}(\mathbb{P}^d,n)\rightarrow\overline{\mathrm{LM}}(\mathbb{P}^d,n)$. Also, the morphism $\mathcal{U}_{\mathbf{nt}}(\mathbb{P}^d,n)\rightarrow\mathrm{LM}(\mathbb{P}^d,n)$ is proper because it is the base change of the proper morphism $\overline{\mathrm{M}}_{\mathbf{nt}}(\mathbb{P}^d,n)\rightarrow\overline{\mathrm{LM}}(\mathbb{P}^d,n)$. This implies properness of $\overline{\varphi}$. So, by the Zariski Main Theorem, we can conclude that the lift of $\overline{\varphi}$ to the normalization $\widetilde{\varphi}\colon\Bl_e\mathrm{LM}(\mathbb{P}^d,n)\rightarrow\mathcal{U}_{\mathbf{nt}}(\mathbb{P}^d,n)^\nu$ is an isomorphism.
\end{proof}

\begin{figure}
\begin{tikzpicture}[line cap=round,line join=round,>=triangle 45,x=1.0cm,y=1.0cm]
\draw (11.016665777829033,-2.94462641198056-0.1) node[anchor=north west] {$C_1$};
\draw (13.633154130355981,-2.927960881072745-0.1) node[anchor=north west] {$C_2$};
\draw (16.199645890159484,-2.91129535016493-0.1) node[anchor=north west] {$C_3$};
\draw (9.350112687047538-0.2,-0.5614554921630157-0.2) node[anchor=north west] {$C_4$};
\draw (8.58349826528805-0.2,-1.9446945575116603-0.2) node[anchor=north west] {$C_n$};
\draw (9.050133130706868-0.2,-1.1114180121209105-0.2) node[anchor=north west] {$ \rotatebox[origin=c]{60}{\dots}$};
\draw [dashed,line width=1.0pt] (9.,-3.)-- (19.,-3.);
\draw [dashed,line width=1.0pt] (14.,5.660254037844387)-- (11.636751345948136,1.5669872981077881);
\draw [dashed,line width=1.0pt] (14.,5.660254037844387)-- (16.363248654051873,1.5669872981077841);
\draw [dashed,line width=1.0pt] (9.,-3.)-- (11.270725942163699,0.933012701892227);
\draw [dashed,line width=1.0pt] (19.,-3.)-- (16.729274057836314,0.9330127018922194);
\draw [line width=1.0pt] (13.232050807568879,3.)-- (12.818375672974065,1.5669872981077873);
\draw [line width=1.0pt] (14.,3.)-- (14.,1.5669872981077864);
\draw [line width=1.0pt] (14.767949192431125,3.)-- (15.181624327025935,1.5669872981077853);
\draw [line width=1.0pt] (12.635362971081847,0.9330127018922251)-- (11.5,-3.);
\draw [line width=1.0pt] (14.,0.9330127018922234)-- (14.,-3.);
\draw [ line width=1.0pt] (15.364637028918153,0.9330127018922215)-- (16.5,-3.);
\draw [line width=1.0pt] (13.232050807568879,3.)-- (14.766410232087397 + 0.18, 3.8715757293157784+ 0.18);
\draw [line width=1.0pt] (14.,3.) -- 
(13.79980943943413  + 0.0, 4.721517805614344 + 0.65);
\draw [line width=1.0pt] 
(14.767949192431125,3.)-- (13.483164352185646 -0.15,4.338210594734599 + 0.2);
\draw [dashed,line width=1.0pt] (11.020725942163693,0.5)-- (16.979274057836314,0.5);
\draw [dashed,line width=1.0pt] (11.886751345948133,2.)-- (16.113248654051876,2.);
\draw [dashed,line width=1.0pt] (12.464101615137759,3.)-- (15.53589838486225,3.);
\draw [dashed,line width=1.0pt] (10.154700538379252,-1.)-- (17.84529946162075,-1.);
\draw [line width=1.0pt] (9.30218169330195,-2.476605954083826)-- (18.068394916438923,-1.3864126626827598);
\draw [line width=1.0pt] (9.995633221760974,-1.2755126742065057)-- (18.494241876281613,-2.124001233379047);
\draw (14.616420453917064,5.421470103742568) node[anchor=north west] {$\mathbb{P}^2$};
\draw (17.516222831876867,0.255155522319919) node[anchor=north west] {$E_{s-1}$};
\draw (15.899666333818816,3.071630245740653) node[anchor=north west] {$E_1$};
\draw (18.699475526331728,-1.4447286302772104) node[anchor=north west] {$E_s$};
\end{tikzpicture}
\caption{For $d=2$, semistable replacement in the proof of Theorem~\ref{thm:locally-at-e-is-simple-blow-up} before the contraction. The components $E_1,\ldots,E_s$ are isomorphic to $\mathbb{F}_1$.}
\label{fig:stable-replacement-before-contraction}
\end{figure}
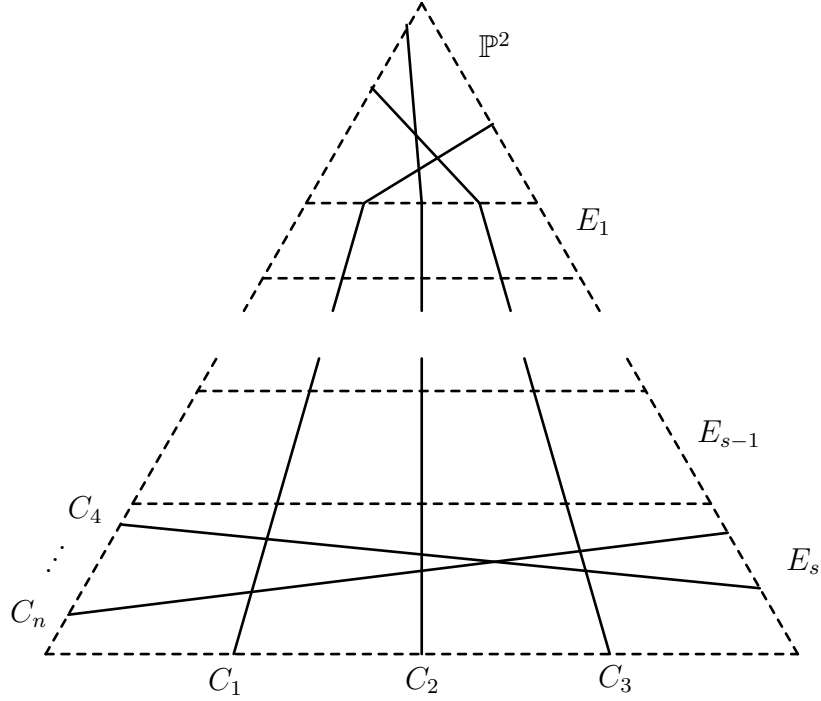


\section{The case of lines in the projective plane}
\label{sec:iso-in-case-of-lines}

In this section, we prove the Theorem~\ref{thm:geometric-meaning-blow-up-identity-dim2}, which covers the case of lines in $\mathbb{P}^2$ claimed in Theorem~\ref{thm:mainA}.

\begin{theorem}
\label{thm:geometric-meaning-blow-up-identity-dim2}
Let $\mathbf{nt}$ be as in Definition~\ref{def:weight_nt}. Then, $\Bl_e\overline{\mathrm{LM}}(\mathbb{P}^2,n)\cong\overline{\mathrm{M}}_{\mathbf{nt}}(\mathbb{P}^2,n)^\nu$.
\end{theorem}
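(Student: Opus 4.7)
The plan is to globalize Theorem~\ref{thm:locally-at-e-is-simple-blow-up}, which already establishes the desired isomorphism only over the open locus $\mathrm{LM}(\mathbb{P}^2,n)\subseteq\overline{\mathrm{LM}}(\mathbb{P}^2,n)$ parametrizing pairs supported on $\mathbb{P}^2$ itself. The crucial extra input needed is a boundary analogue of Lemma~\ref{lem:wall-crossing-in-the-interior-away-from-e}: I would prove that for every stable pair $(X,\mathbf{t}B)$ parametrized by a boundary point $p\in\overline{\mathrm{LM}}(\mathbb{P}^2,n)\setminus\mathrm{LM}(\mathbb{P}^2,n)$, the modified pair $(X,\mathbf{nt}B)$ is still stable. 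This would immediately imply that the birational morphism $f\colon\overline{\mathrm{M}}_{\mathbf{nt}}(\mathbb{P}^2,n)\to\overline{\mathrm{LM}}(\mathbb{P}^2,n)$ from Theorem~\ref{thm:locally-at-e-is-simple-blow-up}~(i) is an isomorphism on $\overline{\mathrm{LM}}(\mathbb{P}^2,n)\setminus\{e\}$.

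To prove the boundary stability statement, I would invoke Alexeev's description of $\overline{\mathrm{LM}}(\mathbb{P}^2,n)\cong X_{\Sigma(n)}$ together with the ensuing description of boundary fibers as unions of toric surfaces glued transversally along toric one-strata, indexed by mixed subdivisions of the dilated triangle $(n-3)\Delta_2$. For $d=2$ every such component is a smooth toric rational surface, and in particular $\mathbb{Q}$-factorial; this ensures that $K_X+\mathbf{nt}B$ is $\mathbb{Q}$-Cartier, which is precisely the step that fails for $d\geq 3$, as highlighted in the introduction. Log canonicity of $(X,\mathbf{nt}B)$ would then be verified by a case analysis on each component analogous to the one in the proof of Lemma~\ref{lem:wall-crossing-in-the-interior-away-from-e}, tracking for each intersection stratum which heavy and which light divisors pass through it; the combinatorial constraint that, on the boundary of $\overline{\mathrm{LM}}(\mathbb{P}^2,n)$, the light divisors $B_3,\ldots,B_n$ are distributed across multiple components prevents the accumulation of too many sections at a single point, which was the obstruction at $e$. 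Ampleness of $K_X+\mathbf{nt}B$ is then checked component by component by Nakai's criterion, following the template of Lemma~\ref{lem:stability-pair-for-weights-nt} and exploiting that $\mathbf{nt}$ is close to a weight vector in the chamber of $\mathbf{t}$.

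With this in hand, I would assemble the global isomorphism exactly as in the proof of Theorem~\ref{thm:locally-at-e-is-simple-blow-up}~(ii), but now over all of $\overline{\mathrm{LM}}(\mathbb{P}^2,n)$. The rational map $\Bl_e\overline{\mathrm{LM}}(\mathbb{P}^2,n)\dashrightarrow\overline{\mathrm{M}}_{\mathbf{nt}}(\mathbb{P}^2,n)$ is an isomorphism away from the exceptional divisor $\mathbb{E}_n$ (by the isomorphism statement above for $f$), while Theorem~\ref{thm:locally-at-e-is-simple-blow-up}~(ii) provides the extension across $\mathbb{E}_n$ after passing to the normalization. Gluing produces a morphism $\widetilde{\varphi}\colon\Bl_e\overline{\mathrm{LM}}(\mathbb{P}^2,n)\to\overline{\mathrm{M}}_{\mathbf{nt}}(\mathbb{P}^2,n)^\nu$, and the properness, quasi-finiteness, and Zariski Main Theorem arguments already deployed in the proof of Theorem~\ref{thm:locally-at-e-is-simple-blow-up}~(ii) promote it to an isomorphism. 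The main obstacle is clearly the boundary stability step: it must be verified uniformly across all combinatorial types of the boundary, and its success is specifically tied to the smoothness and $\mathbb{Q}$-factoriality of the components in dimension two, explaining why the analogous theorem for $d\geq 3$ is left as a conjecture.
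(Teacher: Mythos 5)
Your global architecture is the same as the paper's (extend the isomorphism of Theorem~\ref{thm:locally-at-e-is-simple-blow-up} across the boundary by analyzing the stability of boundary pairs after the weight change, then conclude via properness, quasi-finiteness, and the Zariski Main Theorem), but there is a genuine gap at the central step. You claim that because every irreducible component of a boundary fiber $X$ is a smooth toric surface, hence $\mathbb{Q}$-factorial, the divisor $K_X+\mathbf{nt}B$ is automatically $\mathbb{Q}$-Cartier, and that consequently $(X,\mathbf{nt}B)$ itself is stable for every boundary point. This is false: $\mathbb{Q}$-factoriality of the individual components of a demi-normal reducible variety does not imply that a Weil divisor is $\mathbb{Q}$-Cartier on the glued variety, because the obstruction lives at the double locus. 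The paper's Lemma~\ref{lem:Q-Cartier-modification} (via \cite[Theorem~5.7.2]{Ale15}) identifies exactly when the failure occurs already for $d=2$: whenever some component $X_k\cong\mathbb{P}^1\times\mathbb{P}^1$ meets exactly one of the heavy divisors $C_1,C_2,C_3$ in a point. Note that $K_X+\mathbf{t}C$ \emph{is} $\mathbb{Q}$-Cartier for such $X$; it is precisely the change of the heavy coefficients from $1$ to $1-\epsilon$ that breaks the condition. (The issue you attribute exclusively to $d\geq3$ --- non-$\mathbb{Q}$-factorial components from singular polytopes --- is an \emph{additional} difficulty there, not the only one, and it does not exempt $d=2$ from the double-locus problem.)

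As a result, the morphism $f$ is not an isomorphism over all of $\overline{\mathrm{LM}}(\mathbb{P}^2,n)\setminus\{e\}$ in your sense: the offending boundary pairs acquire genuine stable replacements. The paper handles this by introducing the explicit modification $(\widetilde{X},\mathbf{nt}\widetilde{C})$ of Definition~\ref{def:Q-Cartier-modification} (blowing up the relevant $\mathbb{P}^1\times\mathbb{P}^1$ components at the offending point) and proving in Proposition~\ref{prop:weight-change-from-t-to-nt-on-pair} that the \emph{modified} pair is stable; the extension of $\varphi$ over the boundary then sends $x$ to the point parametrizing $(\widetilde{X},\mathbf{nt}\widetilde{C})$, which is canonically determined by $x$, so \cite[Lemma~3.18]{AET23} still applies. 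A second, smaller gap: your log canonicity argument by ``distribution of light divisors across components'' is not sufficient as stated. In the paper's Lemma~\ref{lem:lc-2-dim} the numerical inequality genuinely fails for the triples $(r,c,m_1)=(0,1,0)$ and $(1,2,0)$, and these cases are excluded only by a geometric argument using the matroid-tiling structure (each component corresponds to a line arrangement with $H_{1,j},H_{2,j},H_{3,j}$ in general position and finite automorphism group), not by a purely combinatorial counting of sections. Your ampleness sketch is essentially correct, but it too must be carried out on $\widetilde{X}_j$ rather than $X_j$ in the non-$\mathbb{Q}$-Cartier cases, as in Lemma~\ref{lem:ampleness}.
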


The proof amounts to show that every stable pair parametrized by 
$\overline{\mathrm{LM}}(\mathbb{P}^2,n)\setminus\{e\}$ corresponds to one parametrized by $\overline{\mathrm{M}}_{\mathbf{nt}}(\mathbb{P}^2,n)$. However, there is a technical aspect that makes the writing a bit more nuanced. The log canonical divisor is $\mathbb{Q}$-Cartier for the pairs parametrized by $\overline{\mathrm{LM}}(\mathbb{P}^2,n)$, but such feature is lost in the wall crossing towards  $\overline{\mathrm{M}}_{\mathbf{nt}}(\mathbb{P}^2,n)$. The next lemma makes this precise.

\begin{lemma}
\label{lem:Q-Cartier-modification}
Let $(X,\mathbf{t}C)$ be a stable pair parametrized by $\overline{\mathrm{LM}}(\mathbb{P}^2,n)$. Let $X=\cup_j X_j$ be the decomposition into irreducible components. Then, $K_X+\mathbf{nt}C$ is not $\mathbb{Q}$-Cartier if and only if there exists an irreducible component $X_k$ isomorphic to $\mathbb{P}^1\times\mathbb{P}^1$ such that exactly one of the intersections $X_k\cap C_1,X_k\cap C_{2},X_k\cap C_{3}$ is a point.
\end{lemma}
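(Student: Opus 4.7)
The plan is to reduce $\mathbb{Q}$-Cartierness of $K_X+\mathbf{nt}C$ to a local analysis at the torus-fixed points of the irreducible components $X_j$, and then to a purely combinatorial condition on the mixed subdivision. Recall from \cite{Ale08,Ale15} that $X=\bigcup_j X_j$, where each component $X_j$ corresponds to a $2$-dimensional cell of a mixed subdivision of $(n-3)\Delta_2$ by $n-3$ copies of $\Delta_2$: each such cell is either a translate of $\Delta_2$ (giving $X_j\cong\mathbb{P}^2$) or a parallelogram of the form $e_i+e_j$ for distinct $i,j\in\{1,2,3\}$ (giving $X_j\cong\mathbb{P}^1\times\mathbb{P}^1$), where $e_1,e_2,e_3$ denote the three edges of $\Delta_2$. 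Since $(X,\mathbf{t}C)$ is stable, $K_X+\mathbf{t}C$ is $\mathbb{Q}$-Cartier, so it suffices to determine when the difference
\[
E:=(K_X+\mathbf{nt}C)-(K_X+\mathbf{t}C)=-\epsilon\sum_{i=1}^{3}C_i+\left(\frac{1+\epsilon}{n-3}-\epsilon\right)\sum_{i=4}^{n}C_i
\]
is $\mathbb{Q}$-Cartier.

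The core local observation is that a Weil divisor $D$ on $X$ is $\mathbb{Q}$-Cartier at a point $p$ only if, for every component $X_j$ meeting $p$, either $D$ is disjoint from $X_j$ near $p$ or $D\cap X_j$ contains a curve through $p$. If instead $D\cap X_j=\{p\}$ set-theoretically for some $X_j$ while $D$ passes through $p$ divisorially on another component, then $D$ cannot be $\mathbb{Q}$-Cartier at $p$, since any local equation for $mD$ would need to be a unit on $X_j$ yet vanish at $p\in X_j$. Hence obstructions to $E$ being $\mathbb{Q}$-Cartier live only at torus-fixed points of components, i.e., at vertices of cells of the mixed subdivision. I would then verify three combinatorial facts: (i) for an upright triangle cell $T$, any vertex of $T$ on a boundary edge $H_l$ of $(n-3)\Delta_2$ forces $T$ to have an entire edge on $H_l$ through that vertex, because $T$ has edges in all three directions $e_1,e_2,e_3$ and the $e_l$-direction edge emanating from the vertex must remain on $H_l$; (ii) for a parallelogram $P$ of type $(e_i,e_j)$, a vertex of $P$ on $H_l$ with $l\in\{i,j\}$ likewise forces an edge of $P$ on $H_l$, but a vertex on $H_k$ (with $\{i,j,k\}=\{1,2,3\}$) does not, because $P$ has no edge in the $e_k$ direction; moreover, $P$ has at most one vertex on $H_k$, since two would require the diagonal of $P$ in direction $e_i+e_j=-e_k$ to lie on $H_k$, forcing $P$ to straddle the boundary of $(n-3)\Delta_2$; (iii) the light lines $C_i$ for $i\geq 4$ always restrict to a divisor (or to the empty set) on each component, since they arise as sections of the polytope fibration $\pi\colon\Delta_2^{n-3}\to(n-3)\Delta_2$ underlying the moduli construction, so they never produce an isolated-point intersection at a vertex.

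Combining these, $E$ fails to be $\mathbb{Q}$-Cartier precisely when some $\mathbb{P}^1\times\mathbb{P}^1$ component $X_P$ has a single vertex on $H_k$ with $k$ the missing direction of the parallelogram, which is exactly the condition that precisely one of $X_P\cap C_1$, $X_P\cap C_2$, $X_P\cap C_3$ is a point. Since the coefficient $-\epsilon$ of $C_k$ in $E$ is nonzero and all other summands of $E$ are divisorial at this vertex, no cancellation occurs. The main obstacle I anticipate is verifying item (iii) cleanly: that the non-toric light divisors always restrict as genuine divisors on each cell, which requires a careful use of the description of the universal family over $\overline{\mathrm{LM}}(\mathbb{P}^2,n)$ in terms of the fiber polytope.
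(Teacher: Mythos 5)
Your proposal is far more self-contained than the paper's proof, which simply invokes \cite[Theorem~5.7.2]{Ale15} for the $\mathbb{Q}$-Cartier criterion and adds the combinatorial observation (via \cite[Figure~3]{San05}) that a parallelogram cell meets the boundary of $(n-3)\Delta_2$ in at most one isolated vertex. You correctly identify the mechanism: an isolated vertex of a parallelogram cell of type $(e_i,e_j)$ on the side $H_k$ in the missing direction, and the direction ``bad component $\Rightarrow$ not $\mathbb{Q}$-Cartier'' can be completed along your lines. Indeed, at such a vertex $p$ the two edges of the parallelogram are conductor components (one checks $p$ cannot be a corner of $(n-3)\Delta_2$), so log canonicity of $(X,\mathbf{t}C)$ forbids any light line from passing through $p$, and no other heavy line contains $p$; hence near $p$ your divisor $E=(K_X+\mathbf{nt}C)-(K_X+\mathbf{t}C)$ is a nonzero multiple of the single effective divisor $C_k$, to which your local observation applies. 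This is a cleaner justification of your ``no cancellation'' claim than the one you give.

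There are, however, two genuine gaps. The serious one is the converse: showing that in the absence of such a component $K_X+\mathbf{nt}C$ \emph{is} $\mathbb{Q}$-Cartier. Your local observation is only a \emph{necessary} condition for $\mathbb{Q}$-Cartierness, so ruling out isolated-point intersections does not produce local equations for $mE$; a Weil divisor can restrict to an honest curve on every component through a point and still fail to be $\mathbb{Q}$-Cartier there, because the would-be local equations must also be compatible across the conductor (in the toric picture, this is the existence of a piecewise-linear function that is linear on each cell of the subdivision). In particular the behavior of the light lines $C_i$, $i\geq 4$, along the conductor, and the $\mathbb{Q}$-Cartierness of $K_X$ itself, must be controlled --- this is exactly what \cite[Theorem~5.7.2]{Ale15} packages, and some version of it must be proved or cited. (A useful shortcut here: since $K_X+C_1+C_2+C_3\sim 0$ by \cite[\S\,8]{KT06} and $\sum_{i\geq4}C_i$ is then $\mathbb{Q}$-Cartier by stability for $\mathbf{t}$, the whole question reduces to the $\mathbb{Q}$-Cartierness of the torus-invariant divisor $C_1+C_2+C_3$.) The second gap is that your classification of the $2$-cells as triangles and parallelograms only holds for fine mixed subdivisions, i.e.\ for the deepest boundary strata; a general cell is a Minkowski sum $a\Delta_2+\sum_l b_le_l$ and can be a trapezoid, pentagon or hexagon, and these must enter the case analysis. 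The conclusion survives --- the face of such a cell in the outer normal direction of $H_l$ is a vertex only when $a=b_l=0$, i.e.\ only for a parallelogram missing the direction $e_l$ --- but your argument does not address it. Relatedly, your justification of item (i) assumes every vertex of a triangle cell carries an edge in direction $e_l$, which fails for one of the three vertices; that vertex simply cannot lie on $H_l$ for positional reasons, and this needs to be said.
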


\begin{proof}
This follows from \cite[Theorem~5.7.2]{Ale15}. The reason why at most one point is blown up boils down to the combinatorics of mixed subdivisions of $(n-3)\Delta_2$. If a subpolytope of $(n-3)\Delta_2$ corresponding to $\mathbb{P}^1\times\mathbb{P}^1$ intersects the boundary of $(n-3)\Delta_2$ in an isolated point, then this can happen only once (see \cite[Figure~3]{San05}).
\end{proof}

\begin{definition}
\label{def:Q-Cartier-modification}
Let $(X,\mathbf{t}C)$ be a stable pair parametrized by a point in $
\overline{\mathrm{LM}}(\mathbb{P}^2,n)$. We define 
$(\widetilde{X},\mathbf{nt}\widetilde{C})$ as follows
\begin{itemize}
    \item If $K_X+\mathbf{nt}C$ is not $\mathbb{Q}$-Cartier, then 
    $\widetilde{X}$ is the blow up of $X$ along $C_1,C_2,C_3$ and  $\widetilde{C}$ is the preimage of $C$ under the morphism $\widetilde{X}\rightarrow X$.
    \item 
    if $K_X+\mathbf{nt}C$ is $\mathbb{Q}$-Cartier, then set $(\widetilde{X},\mathbf{nt}\widetilde{C}):=(X,\mathbf{nt}C)$
\end{itemize}
\end{definition}

The first step is to prove the following proposition.

\begin{proposition}
\label{prop:weight-change-from-t-to-nt-on-pair}
Let $(X,\mathbf{t}C)$ be a stable pair parametrized by a point in $
\overline{\mathrm{LM}}(\mathbb{P}^2,n)\setminus\{e\}$. Then $(\widetilde{X},\mathbf{nt}\widetilde{C})$ is also stable, and hence it is parametrized by a point in $
\overline{\mathrm{M}}_{\mathbf{nt}}(\mathbb{P}^2,n)$.
\end{proposition}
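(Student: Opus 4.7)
The plan is to verify, for $(\widetilde{X},\mathbf{nt}\widetilde{C})$, the three defining conditions of stability: semi-log canonical singularities, $\mathbb{Q}$-Cartierness of $K_{\widetilde{X}}+\mathbf{nt}\widetilde{C}$, and its ampleness. By Lemma~\ref{lem:Q-Cartier-modification} and Definition~\ref{def:Q-Cartier-modification}, the $\mathbb{Q}$-Cartier property is either automatic ($\widetilde{X}=X$) or, after the toric blow up along $C_1,C_2,C_3$ that defines $\widetilde{X}$, holds by construction. I would therefore split the argument into these two cases throughout.

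For semi-log canonicity in the case $\widetilde{X}=X$, I would adapt the combinatorial argument of Lemma~\ref{lem:wall-crossing-in-the-interior-away-from-e} component by component. Fix an irreducible component $X_j$ and a stratum $\bigcap_{i\in I}C_i|_{X_j}$ of codimension $c$ in $X_j$; setting $m_1=|I\cap\{1,2,3\}|$ and $m_2=|I\cap\{4,\ldots,n\}|$, log canonicity of $(X,\mathbf{t}C)$ gives $m_1+\epsilon m_2\leq c$, and the sought inequality $(1-\epsilon)m_1+\tfrac{1+\epsilon}{n-3}m_2\leq c$ follows by the same case distinctions as in that lemma. The only potentially bad case is $m_1=0$, $m_2=n-3$, which forces $C_4=\cdots=C_n$ on $X_j$; combined with the existence of a component carrying the three heavy lines in general position, this forces $(X,\mathbf{t}C)$ to be the pair parametrized by $e$, contradicting the hypothesis. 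In the blow-up case, the additional exceptional curves introduced by $\widetilde{X}\to X$ enter the conductor with coefficient one and the modification is toric, so slc can be verified by a direct local computation.

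For ampleness, I would classify the irreducible components of $X$ according to the cells in the mixed subdivision of $(n-3)\Delta_2$ associated with the stable pair: triangular cells yield copies of $\mathbb{P}^2$, parallelogram cells yield $\mathbb{P}^1\times\mathbb{P}^1$, trapezoidal cells yield $\mathbb{F}_1$, and so on. On each component $X_j$, as in Lemma~\ref{lem:stability-pair-for-weights-nt}, it suffices by Kleiman's criterion to test positivity of $K_{X_j}+D_j+\mathbf{nt}C|_{X_j}$ against each torus-invariant curve, where $D_j$ is the conductor. Since $\mathbf{nt}$ differs from $\mathbf{t}$ only by a perturbation of order $\epsilon$ and $K_{X_j}+D_j+\mathbf{t}C|_{X_j}$ is already ample, most intersection numbers remain positive for $\epsilon\ll 1$ by continuity, leaving only a finite list of boundary-type curves that must be checked by explicit computation.

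I expect the main obstacle to be ampleness in the non-$\mathbb{Q}$-Cartier case. After blowing up a $\mathbb{P}^1\times\mathbb{P}^1$ component $X_k$ at the point where exactly one heavy line $C_i$ meets it, the exceptional curve is pulled into the conductor with coefficient one and the strict transforms of the other boundary curves change self-intersection; one must verify, on the resulting $\mathbb{F}_1$, that the weight redistribution $1\mapsto 1-\epsilon$ and $\epsilon\mapsto \tfrac{1+\epsilon}{n-3}$ keeps the intersection of $K+\mathrm{conductor}+\mathbf{nt}\widetilde{C}|_{\mathbb{F}_1}$ with the fiber, the section of negative self-intersection, and the remaining boundary curves strictly positive, and remains compatible with the log canonical divisor on adjacent components across the conductor.
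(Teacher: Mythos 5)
Your overall architecture matches the paper's (reduce to $\mathbb{Q}$-Cartierness via the modification of Definition~\ref{def:Q-Cartier-modification}, then check ampleness and log canonicity separately, testing ampleness against torus-invariant curves via Kleiman), but there are two genuine gaps. First, the ampleness step: your claim that ``$\mathbf{nt}$ differs from $\mathbf{t}$ only by a perturbation of order $\epsilon$'' is false. The heavy weights move by $\epsilon$, but the light weights jump from $\epsilon$ to $\frac{1+\epsilon}{n-3}$, a change of definite size, so continuity gives you nothing for the terms $\frac{1+\epsilon}{n-3}\sum_{i\geq4}C_i^{(j)}$. The paper's argument needs the structural input from Keel--Tevelev that $X$ is a stable toric variety with $K_X+\sum_{i=1}^3C_i\sim0$, whence $\sum_{i=4}^nC_i$ is \emph{ample} (being $\mathbb{Q}$-linearly equivalent to the ample $K_X+\mathbf{t}C$ up to the small term $\epsilon\sum_{i\geq4}C_i$); one then writes the new log canonical divisor as the old ample divisor plus a positive multiple of this ample divisor minus $\epsilon\sum_{i=1}^3C_i^{(j)}$, and only the last, genuinely small, term is absorbed by a perturbation lemma. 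Without knowing $\sum_{i\geq4}C_i$ is ample, the large positive change in the light coefficients could a priori destroy ampleness, and your ``finite list of boundary-type curves to check'' is not actually carried out.

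Second, the log canonicity step: your numerical criterion $m_1+\epsilon m_2\leq c$ omits the conductor divisor $D_j$, which enters with coefficient $1$ on every non-normal component. With the conductor multiplicity $r$ included, the target inequality $r+(1-\epsilon)m_1+\frac{1+\epsilon}{n-3}m_2\leq c$ \emph{fails numerically} not only for $(r,c,m_1,m_2)=(0,1,0,n-3)$ but also for $(1,2,0,n-3)$, i.e.\ a point lying on one conductor component through which all $n-3$ light lines pass. Your proposed exclusion --- that $C_4=\cdots=C_n$ on some component forces the pair to be the one parametrized by $e$ --- does not work on a reducible $X$: the light lines can all coincide on a single component of a genuinely degenerate surface without the global pair being $(\mathbb{P}^2,\mathbf{t}H)$ with $H_4=\cdots=H_n$. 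The paper rules out both bad triples geometrically, using that each component is the log canonical model of a plane line arrangement with \emph{finite} automorphism group and that $H_{1,j},H_{2,j},H_{3,j}$ restrict to general position on every component (because $(1,1,1,0,\ldots,0)$ is the unique integral vertex of the cut hypersimplex $\Delta_{\mathbf{t}}(3,n)$, so every matroid polytope of the tiling contains it); the configuration realizing $(1,2,0,n-3)$ is then either excluded by the finiteness of automorphisms or shown to be a stable plane arrangement, contradicting degeneracy. These ingredients are absent from your sketch and are not routine to supply.
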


\begin{proof}
First, we argue that $K_{\widetilde{X}}+\mathbf{nt}\widetilde{C}$ is $\mathbb{Q}$-Cartier. By \cite[Theorem~5.7.2]{Ale15}, this is handled by performing the modification detailed in Definition~\ref{def:Q-Cartier-modification}. The ampleness of the log canonical divisor $K_{\widetilde{X}}+\mathbf{nt}\widetilde{C}$ follows from Lemma~\ref{lem:ampleness}. Log canonicity of the pair $(\widetilde{X},\mathbf{nt}\widetilde{C})$ is proved in Lemma~\ref{lem:lc-2-dim}.
\end{proof}

\subsubsection{Ampleness of the log canonical divisor}
\label{sec:ampleness-lc-divisor}

We first introduce some notation that we will retain for the rest of the section. Given a pair $(X,\sum_iB_i)$ with $X$ demi-normal, let $X=\cup_jX_j$ be its decomposition into irreducible components and $D_j\subseteq X$ the conductor divisor. Also, let $B_i^{(j)}$ be the restriction of $B_i$ to $X_j$.

\begin{lemma}
\label{lemma:ampleness-on-toric-variety-with-coefficient}
Let $X$ be a projective toric variety, $A$ an ample divisor on $X$, and $D$ a divisor on $X$. Let $c\in(0,1)\cap\mathbb{Q}$. Then, for $c\ll1$, the divisor $A+cD$ is ample.
\end{lemma}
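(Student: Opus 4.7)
The plan is to invoke Kleiman's ampleness criterion together with the finiteness of torus-invariant curves on a projective toric variety. Concretely, for a projective toric variety $X$ with fan $\Sigma$, the Mori cone $\overline{\mathrm{NE}}(X)$ is rational polyhedral and generated by the finitely many torus-invariant curves $C_1,\ldots,C_k$ corresponding to the codimension-one cones of $\Sigma$; this is the toric cone theorem, which is in fact already cited in the paper as \cite[Theorem~6.3.20~(b)]{CLS11} (and used in the proof of Lemma~\ref{lem:stability-pair-for-weights-nt}). Consequently, Kleiman's criterion reduces ampleness of a $\mathbb{Q}$-Cartier divisor $L$ on $X$ to the finite collection of inequalities $L\cdot C_i>0$.

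The second step is then an elementary continuity argument on this finite set of intersection numbers. Since $A$ is ample, the quantity
\[
a:=\min_{1\leq i\leq k}(A\cdot C_i)
\]
is strictly positive. Set $M:=\max_{1\leq i\leq k}|D\cdot C_i|$. If $M=0$, then $(A+cD)\cdot C_i=A\cdot C_i>0$ for every $c$, and there is nothing more to prove. Otherwise, for every rational $c$ with $0<c<a/M$ we have
\[
(A+cD)\cdot C_i\geq a-cM>0\qquad\text{for all }i=1,\ldots,k,
\]
so $A+cD$ is ample by Kleiman's criterion. This proves the lemma with the explicit threshold $c_0=\min\{1,a/M\}$.

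The only subtlety worth flagging is that Kleiman's criterion is applied to the $\mathbb{Q}$-Cartier divisor $A+cD$; if one wishes to allow $D$ to be a mere Weil divisor, one should implicitly assume (as is the case in the subsequent applications of this lemma, where $A$ and $D$ appear as natural combinations on toric irreducible components) that $A+cD$ is $\mathbb{Q}$-Cartier so that intersection numbers with curves are well defined. There is no real obstacle here; the statement is essentially a packaging of the openness of the ample cone in $N^1(X)_{\mathbb{Q}}$ specialized to the toric setting, where openness becomes a finite check.
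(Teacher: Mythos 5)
Your proposal is correct and follows essentially the same route as the paper's proof: both reduce ampleness to positivity against the finitely many torus-invariant curves via \cite[Theorem~6.3.20~(b)]{CLS11} and Kleiman's criterion, then take a minimum over the resulting finite set of thresholds. Your explicit bound $c<a/M$ and the remark on the $\mathbb{Q}$-Cartier hypothesis are harmless refinements of the same argument.
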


\begin{proof}
By \cite[Theorem~6.3.20~(b)]{CLS11} we have that the effective cone of curves on $X$ is generated by the torus fixed curves, which we denote by $L_1,\ldots,L_m$. Therefore, by Kleiman's ampleness criterion \cite[Theorem~1.18]{KM98}, we have that $A+cD$ is ample provided $(A+cD)\cdot L_i>0$ for all $i$. Let $c_i\in(0,1)\cap\mathbb{Q}$ such that $(A+c_iD)\cdot L_i>0$, where $c_i$ exists because $A\cdot L_i>0$. If we define $c=\min\{c_1,\ldots,c_m\}$, then $(A+cD)\cdot L_i>0$ for all $i$.
\end{proof}

\begin{lemma}
\label{lem:ampleness}
Let $(X,\mathbf{t}C)$ be a stable pair parametrized by a point in $
\overline{\mathrm{LM}}(\mathbb{P}^2,n)$. Then, the divisor $K_{\widetilde{X}}+\mathbf{nt}\widetilde{C}$ is ample.
\end{lemma}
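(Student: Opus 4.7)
The plan is to verify ampleness componentwise on $\widetilde{X}$. First, since $\widetilde{X}$ is projective and demi-normal and $K_{\widetilde{X}} + \mathbf{nt}\widetilde{C}$ is $\mathbb{Q}$-Cartier by construction of the modification in Definition~\ref{def:Q-Cartier-modification}, the ampleness of $K_{\widetilde{X}} + \mathbf{nt}\widetilde{C}$ reduces to the ampleness of its restriction $K_{\widetilde{X}_j} + \widetilde{D}_j + \mathbf{nt}\widetilde{C}^{(j)}$ to each irreducible component $\widetilde{X}_j$ of $\widetilde{X}$.

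The second key step is noting that each $\widetilde{X}_j$ is a projective toric surface. Indeed, the irreducible components of $X$ correspond to cells of a mixed subdivision of $(n-3)\Delta_2$ (by the toric description of the Losev--Manin space, Theorem~\ref{thm:LM-space-for-lines-is-toric}) and are therefore toric, while the modification of Definition~\ref{def:Q-Cartier-modification} blows up a torus-invariant point of $\mathbb{P}^1 \times \mathbb{P}^1$, again producing a projective toric surface. With this toric structure, I would apply Kleiman's criterion together with \cite[Theorem~6.3.20~(b)]{CLS11} (exactly as in Lemma~\ref{lemma:ampleness-on-toric-variety-with-coefficient}) to reduce ampleness to verifying that $(K_{\widetilde{X}_j} + \widetilde{D}_j + \mathbf{nt}\widetilde{C}^{(j)}) \cdot L > 0$ on every torus-invariant curve $L \subseteq \widetilde{X}_j$.

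Third, I would carry out a case analysis based on the type of toric surface $\widetilde{X}_j$ and the configuration of the $C_i$ on it. In the unmodified case $\widetilde{X}_j = X_j$, I would take $K_{X_j} + D_j + \mathbf{t}C^{(j)}$ (ample by stability of the $\mathbf{t}$-pair) as a starting point and explicitly compute the correction $(\mathbf{nt}-\mathbf{t})\widetilde{C}^{(j)} \cdot L$, using the explicit description of how $C_4,\ldots,C_n$ meet torus-invariant curves on toric surfaces such as $\mathbb{P}^2$ or $\mathbb{P}^1\times\mathbb{P}^1$ (see, e.g., Definition~\ref{def:description-of-the-new-fibers}). In the modified case $\widetilde{X}_j \cong \mathrm{Bl}_p(\mathbb{P}^1\times\mathbb{P}^1)$, I would compute intersection numbers directly on the two rulings, on the exceptional divisor, and on the strict transforms of boundary curves, mirroring the Kleiman-type computation carried out in Lemma~\ref{lem:stability-pair-for-weights-nt}.

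The main obstacle is that the weight change from $\mathbf{t}$ to $\mathbf{nt}$ is not infinitesimal: as $\epsilon \to 0$, the weights on $C_4,\ldots,C_n$ jump from $\epsilon$ to approximately $\tfrac{1}{n-3}$, so Lemma~\ref{lemma:ampleness-on-toric-variety-with-coefficient} cannot be applied as a direct small perturbation of $K_{X_j} + D_j + \mathbf{t}C^{(j)}$. The case analysis therefore has to track this macroscopic shift explicitly, and the delicate point is to verify on the exceptional curve of the modification that the positive contribution coming from the new $\mathbf{nt}$-weight on $C_{d+2},\ldots,C_n$ is large enough to outweigh the negative contribution from the conductor and the strict transforms of $C_1, C_2, C_3$.
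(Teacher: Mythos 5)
Your overall architecture (reduce to the irreducible components, use the toric structure together with Kleiman's criterion, and treat the blown-up components separately) matches the paper's, and your computational plan for the modified components $\widetilde{X}_j\cong\Bl_p(\mathbb{P}^1\times\mathbb{P}^1)$ is essentially what the paper does: one checks positivity on the exceptional curve and on the strict transforms of the two rulings through $p$. (A small correction there: on the exceptional curve the light divisors $\widetilde{C}_4^{(j)},\ldots,\widetilde{C}_n^{(j)}$ contribute zero, and the positivity comes out to exactly $\epsilon$ because the exceptional curve coincides with one of $\widetilde{C}_1^{(j)},\widetilde{C}_2^{(j)},\widetilde{C}_3^{(j)}$, carrying coefficient $1-\epsilon$ against the conductor contribution; it is not the new $\mathbf{nt}$-weight on the light lines that saves you.)

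There is, however, a genuine gap in the unmodified case, and it sits exactly at the point you flag as ``the main obstacle'' without resolving it. The paper's key input, absent from your proposal, is that by \cite[\S\,8]{KT06} the stable toric variety $X$ satisfies $K_X+\sum_{i=1}^3C_i\sim0$; combined with the ampleness of $K_X+\mathbf{t}C$ this shows that $\sum_{i=4}^nC_i$ is ample, hence so is its restriction $\sum_{i=4}^nC_i^{(j)}$ to each component by \cite[Proposition~1.2.16~(ii)]{Laz04}. With this in hand, the macroscopic increase of the light weights from $\epsilon$ to $\frac{1+\epsilon}{n-3}$ is the addition of a positive multiple of an ample divisor and costs nothing; the only correction that needs to be small is the decrease $-\epsilon\sum_{i=1}^3C_i^{(j)}$ of the heavy weights, which is precisely where Lemma~\ref{lemma:ampleness-on-toric-variety-with-coefficient} applies. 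Without this observation, your plan to ``track the macroscopic shift explicitly'' via a case analysis on the type of toric surface is not actually carried out, and it would be substantially harder than you suggest: the components $X_j$ correspond to arbitrary cells of a mixed subdivision of $(n-3)\Delta_2$ (trapezoids, hexagons, and so on, not only $\mathbb{P}^2$ and $\mathbb{P}^1\times\mathbb{P}^1$), and the incidence pattern of $C_4^{(j)},\ldots,C_n^{(j)}$ with the torus-invariant curves varies with the subdivision, so there is no finite list of explicit intersection computations to fall back on. As written, the proposal correctly identifies the difficulty but does not close it.
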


\begin{proof}
Let us start by considering the case where $K_X+\mathbf{nt}C$ is $\mathbb{Q}$-Cartier. By hypothesis, the divisor $K_X+\mathbf{t}C$ is ample. Moreover, by \cite[\S\,8]{KT06}, we have that $X$ a stable toric variety such that $K_X+\sum_{i=1}^3C_i\sim0$. Therefore, the divisor $\sum_{i=4}^nC_i$ is ample. Let $X=\cup_{j=1}^mX_j$, $D_j\subseteq X_j$, and $C_i^{(j)}$ be as at the beginning of \S\,\ref{sec:ampleness-lc-divisor}. By \cite[Proposition~1.2.16~(ii)]{Laz04} we have that $\sum_{i=4}^nC_i^{(j)}$ is ample on $X_j$.

We now show that $K_{X}+\mathbf{nt}C$ is ample. For this, it is enough to prove that for each $j$ the divisor
\begin{equation}
\label{eq:divisor-to-prove-is-ample}
K_{X_j}+D_j+(1-\epsilon)\sum_{i=1}^{3}C_i^{(j)}+\frac{1+\epsilon}{n-3}\sum_{i=4}^nC_i^{(j)}
\end{equation}
is ample. This can be rewritten as
\[
\left(K_{X_j}+\widetilde{D}_j+\sum_{i=1}^{3}C_i^{(j)}+\epsilon\sum_{i=4}^nC_i^{(j)}\right)+\left(\frac{1+\epsilon}{n-3}-\epsilon\right)\sum_{i=4}^nC_i^{(j)}-\epsilon\sum_{i=1}^{3}C_i^{(j)}.
\]
We already know that the first divisor in parentheses is ample. The second divisor is ample because $\sum_{i=4}^nC_i^{(j)}$ is ample by the discussion at the beginning of the proof, and its coefficient is positive provided $\epsilon$ is small enough. Finally, $\sum_{i=1}^{3}C_i^{(j)}$ is a divisor on the toric variety $X_j$ and $\epsilon$ is arbitrarily small, so we can conclude that the divisor \eqref{eq:divisor-to-prove-is-ample} is ample by Lemma~\ref{lemma:ampleness-on-toric-variety-with-coefficient}.

Next, we consider the case where $K_X+\mathbf{nt}C$ is not $\mathbb{Q}$-Cartier. So, consider the birational modification $\widetilde{X}\rightarrow X$. We only need to focus on the irreducible components $\widetilde{X}_j$ of $\widetilde{X}$, which are not isomorphic to the corresponding component $X_j \cong \mathbb{P}^1 \times \mathbb{P}^1$. Let $\widetilde{D}_j\subseteq\widetilde{X}$ be the conductor divisor. We need to check the ampleness of 
\(
K_{\widetilde{X}_j}+\widetilde{D}_j+\mathbf{nt}\widetilde{C}^{(j)}.
\)
For this, it suffices to check that the intersection with the torus fixed curves of $\widetilde{X}_j$ is positive. In particular, it suffices to check this for the exceptional curve $E$ of the blow up $\widetilde{X}_j\rightarrow X_j$ and the strict transforms $R_1,R_2$ of the distinct rulings passing through the blown up point.

We first consider $E$, which coincides with one of the curves among $\widetilde{C}_1^{(j)},\widetilde{C}_2^{(j)},\widetilde{C}_3^{(j)}$. We have that
\begin{align*}
&E\cdot\left(K_{\widetilde{X}_j}+\widetilde{D}_j+(1-\epsilon)\sum_{i=1}^3\widetilde{C}_i^{(j)}+\frac{1+\epsilon}{n-3}\sum_{i=4}^n\widetilde{C}_i^{(j)}\right)\\
={}&E\cdot\left(-C_1^{(j)}-C_2^{(j)}-C_3^{(j)}+(1-\epsilon)\sum_{i=1}^3\widetilde{C}_i^{(j)}+\frac{1+\epsilon}{n-3}\sum_{i=4}^n\widetilde{C}_i^{(j)}\right)\\
={}&1-(1-\epsilon)+\frac{1+\epsilon}{n-3}\cdot0=\epsilon>0.
\end{align*}

Finally, we consider $R_1$ (the argument for $R_2$ is analogous). We have that $R_1$ is an irreducible component of the conductor divisor $D_j$, and it can intersect transversely in one point exactly one or two curves among $\widetilde{C}_1^{(j)},\widetilde{C}_2^{(j)},\widetilde{C}_3^{(j)}$. Moreover, because of stability with respect to the weight vector $\mathbf{t}$, we have that $R_1$ has to intersect at least one curve among $\widetilde{C}_4,\ldots,\widetilde{C}_n$. It follows from these considerations that
\begin{align*}
&R_1\cdot\left(-C_1^{(j)}-C_2^{(j)}-C_3^{(j)}+(1-\epsilon)\sum_{i=1}^3\widetilde{C}_i^{(j)}+\frac{1+\epsilon}{n-3}\sum_{i=4}^n\widetilde{C}_i^{(j)}\right)
\\
\geq{}&-2+(1-\epsilon)\cdot2+\frac{1+\epsilon}{n-3}\cdot1=\frac{1+\epsilon(1-2(n-3))}{n-3},
\end{align*}
which is positive for $\epsilon\ll1$.
\end{proof}

\subsubsection{Study of log canonicity}

\begin{lemma}
\label{lem:lc-2-dim}
Let $(X,\mathbf{t}C)$ be a stable pair parametrized by a point in $
\overline{\mathrm{LM}}(\mathbb{P}^2,n)\setminus\mathrm{LM}(\mathbb{P}^2,n)$. Then $(\widetilde{X},\mathbf{nt}\widetilde{C})$ is log canonical.
\end{lemma}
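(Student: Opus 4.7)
The plan is to verify log canonicity of $(\widetilde{X},\mathbf{nt}\widetilde{C})$ locally at each point, exploiting that $(X,\mathbf{t}C)$ is already log canonical and that $\mathbf{nt}$ differs from $\mathbf{t}$ only by a small perturbation of the weights. First, I would decompose $\widetilde{X}=\bigcup_j\widetilde{X}_j$ with conductor divisors $\widetilde{D}_j$, and via the standard criterion for semi-log canonical singularities reduce to checking that each pair
\[
\left(\widetilde{X}_j,\ \widetilde{D}_j+(1-\epsilon)\sum_{i=1}^3\widetilde{C}_i^{(j)}+\tfrac{1+\epsilon}{n-3}\sum_{i=4}^n\widetilde{C}_i^{(j)}\right)
\]
is log canonical.

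The key structural input comes from the boundary hypothesis. Since $X$ is strictly reducible, its irreducible components correspond to the cells of a non-trivial mixed subdivision of $(n-3)\Delta_2$ by Theorem~\ref{thm:LM-space-for-lines-is-toric}. From this combinatorial description I would extract bounds on how the light divisors $C_4^{(j)},\ldots,C_n^{(j)}$ can accumulate on each component and intersect the conductor: in particular, at most $n-4$ of them can coincide as a single reduced curve on any component (else the cell would be $(n-3)\Delta_2$ itself, forcing $X$ to be irreducible), so the combined coefficient on any reduced curve coming from the light sections is bounded by $\tfrac{(n-4)(1+\epsilon)}{n-3}<1$. This is the structural input analogous to what is enforced on the tail components in Definition~\ref{def:description-of-the-new-fibers}.

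With these bounds the local check proceeds as follows. At a smooth snc point $p\in\widetilde{X}_j$, let $s$ be the number of local conductor branches and $m_i$ the local multiplicity of $\widetilde{C}_i^{(j)}$ at $p$; write $M=m_1+m_2+m_3$ and $N=m_4+\ldots+m_n$. Log canonicity of $(X,\mathbf{t}C)$ gives $s+M+\epsilon N\le 2$, while log canonicity of $(\widetilde{X},\mathbf{nt}\widetilde{C})$ requires $s+(1-\epsilon)M+\tfrac{1+\epsilon}{n-3}N\le 2$. When $s+M=2$ the original inequality forces $N=0$, so the new sum is $2-\epsilon M\le 2$; when $s+M\le 1$, the structural bound on $N$ together with $\epsilon\ll 1$ keeps the new sum below $2$. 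At the cyclic quotient singularities of $\widetilde{X}_j$ the same inequalities are checked on the standard toric resolution via the discrepancy formula, and openness of log canonicity under small perturbations of the coefficients finishes these cases.

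Finally, for components where the $\mathbb{Q}$-Cartier modification of Definition~\ref{def:Q-Cartier-modification} is performed, one must additionally verify log canonicity along the exceptional curve $E$ of $\widetilde{X}_j\to X_j$ and along the strict transforms of the two rulings through the blown-up point; the exceptional curve $E$ coincides with one of the heavy divisors with coefficient $1-\epsilon$, and the configuration is snc, so the check reduces to multiplicity computations analogous to those carried out in the proof of Lemma~\ref{lem:ampleness}. The main obstacle is the second step, namely extracting from the toric combinatorics of mixed subdivisions of $(n-3)\Delta_2$ sharp enough bounds on how the light divisors accumulate at points of the conductor to absorb the weight change; the delicate point is to ensure the inequalities hold uniformly in $\epsilon$ across every local configuration, particularly at points lying simultaneously on the conductor and on the support of multiple light sections.
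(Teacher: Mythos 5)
Your reduction to the numerical inequality on each component and your treatment of the $\mathbb{Q}$-Cartier modification match the paper's strategy, but there is a genuine gap in the middle step. Your only structural input is that at most $n-4$ of the light divisors can coincide \emph{as a curve} on a component, giving the bound $\frac{(n-4)(1+\epsilon)}{n-3}<1$ on the coefficient of a reduced curve. This addresses a codimension-one center, but it says nothing about the configuration in which one branch of the conductor and all $n-3$ light divisors, \emph{pairwise distinct as curves}, pass through a common point. In your notation this is $s=1$, $M=0$, $N=n-3$ at a point, where the new local sum is $1+\frac{(1+\epsilon)(n-3)}{n-3}=2+\epsilon>2$, while the old sum $1+(n-3)\epsilon\leq2$ is perfectly consistent with $\mathbf{t}$-log canonicity. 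Your claim that ``the structural bound on $N$ together with $\epsilon\ll1$ keeps the new sum below $2$'' is exactly where the argument breaks: a bound on how many light sections coincide as curves does not control how many pass through a single point of the conductor, and smallness of $\epsilon$ does not help since the excess is $+\epsilon$ in the wrong direction.

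The paper excludes this configuration (and also rigorously justifies your curve-case bound, whose one-line justification via ``the cell would be $(n-3)\Delta_2$ itself'' is an assertion rather than a proof) by a genuinely geometric rigidity argument: by \cite[Theorem~5.3.7~(2)]{Ale15} each component $(X_j,R_j)$ is the log canonical model of a plane line arrangement $(\mathbb{P}^2,K_j)$ with \emph{finite automorphism group}, and every matroid polytope of the associated tiling contains the vertex $(1,1,1,0,\ldots,0)$, so the three heavy lines are in general linear position. In the problematic point case the arrangement is then forced to be one of two explicit configurations; one is excluded because it has a positive-dimensional automorphism group, and the other is already $\mathbf{t}$-log canonical, which forces $(X,\mathbf{t}C)$ to be irreducible and parametrized by $\mathrm{LM}(\mathbb{P}^2,n)$, contradicting the boundary hypothesis. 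The curve case is disposed of similarly (four general lines give the pair at $e$). Without some version of this input your local inequalities simply fail for these two configurations. As a minor point, the components $X_j$ are smooth by \cite[Theorem~5.7.2]{Ale15}, so the detour through cyclic quotient singularities is unnecessary.
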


\begin{proof}
With the notation introduced at the beginning of \S\,\ref{sec:ampleness-lc-divisor}, by hypothesis we have that the pair
\begin{equation}
\label{eq:irr-comp-stable-pair-LM-weights-dim2}
\left(X_j,D_j+\sum_{i=1}^{3}C_i^{(j)}+\epsilon\sum_{i=4}^nC_i^{(j)}\right)
\end{equation}
is log canonical and we aim to show that 
\begin{equation}
\label{eq:irr-comp-stable-pair-d-weights-dim2}
\left(\widetilde{X}_j,\widetilde{D}_j+(1-\epsilon)\sum_{i=1}^{3}\widetilde{C}_i^{(j)}+\frac{1+\epsilon}{n-3}\sum_{i=4}^n\widetilde{C}_i^{(j)}\right)
\end{equation}
is log canonical. First, we transform these conditions on the singularities into numerical ones.

Let $Z\subseteq X_j$ be a codimension $c$ closed subvariety contained in $r$ irreducible components of $D_j$ (so, $0\leq r\leq c\leq2$), $m_1$ divisors among $C_1^{(j)},C_2^{(j)},C_3^{(j)}$, and $m_2$ divisors among $C_4^{(j)},\ldots,C_n^{(j)}$. Since $X_j$ is smooth by \cite[Theorem~5.7.2]{Ale15}, the log canonicity of \eqref{eq:irr-comp-stable-pair-LM-weights-dim2} is equivalent to
\begin{equation}
\label{eq:lc-l-m-weight-dim2}
r+m_1+\epsilon m_2\leq c,
\end{equation}
We have two cases: the locus $Z$ intersects the center of $\widetilde{X} \to X$ or it does not. Both cases are studied independently. We start with the case where $Z$ does not intersect the non $\mathbb{Q}$-Cartier centers. In this case, to have a simplified notation, it will not be restrictive to work with $X$ and $C$ instead of $\widetilde{X}$ and $\widetilde{C}$.

Then, the log canonicity of \eqref{eq:irr-comp-stable-pair-d-weights-dim2} is equivalent to 
\begin{equation}
\label{eq:non-lc-d-weight-dim2}
r+(1-\epsilon)m_1+\frac{1+\epsilon}{n-3}m_2\leq c.
\end{equation}
Let us examine the possible cases. First, we suppose that $r=c$. In this case, \eqref{eq:lc-l-m-weight-dim2} implies that $m_1=m_2=0$. As a consequence, \eqref{eq:non-lc-d-weight-dim2} holds as well. So, we suppose that $r<c$. Then, the pair $(r,c)$ can be equal to
\[
(0,1),~(0,2),~(1,2).
\]
In the above cases, if $m_2\leq n-4$, then a direct check shows that the inequality \eqref{eq:non-lc-d-weight-dim2} is automatically satisfied (in some cases one may have to assume that $\epsilon$ is small enough). So, from now on we assume that $m_2=n-3$, which via \eqref{eq:lc-l-m-weight-dim2} implies that $0\leq m_1<c-r$.

The argument so far reduced the discussion to the following cases for the triple $(r,c,m_1)$:
\[
(0,1,0),~(0,2,0),~(0,2,1),~(1,2,0).
\]
In the second and third case we have that the inequality \eqref{eq:non-lc-d-weight-dim2} is satisfied. So, we only focus on $(0,1,0),(1,2,0)$. These two triplets do not satisfy \eqref{eq:non-lc-d-weight-dim2}, which seems to contradict our thesis. However, we now show that the geometry of $(X_j,R_j):=(X_j,\sum_{i=1}^3C_i^{(j)}+\epsilon\sum_{i=4}^nC_i^{(j)})$ does not allow for these two cases to appear. We first recall a few facts.

By \cite[Theorem~5.3.7~(2)]{Ale15}, each $(X_j,R_j)$ is isomorphic to the log canonical model of a line arrangement $(\mathbb{P}^2,K_j:=\sum_{i=1}^3H_{i,j}+\epsilon\sum_{i=1}^nH_{i,j})$ with finite automorphism group. Moreover, we have the following diagram of birational maps:
\begin{center}
\begin{tikzpicture}[>=angle 90]
\matrix(a)[matrix of math nodes,
row sep=2em, column sep=2em,
text height=1.5ex, text depth=0.25ex]
{&(\widetilde{X}_j,\widetilde{R}_j)&\\
(X_j,R_j)&&(\mathbb{P}^2,K_j)\\};
\path[->] (a-1-2) edge node[above left]{$g$}(a-2-1);
\path[->] (a-1-2) edge node[above right]{$f$}(a-2-3);
\path[dashed,->] (a-2-1) edge node[]{}(a-2-3);
\end{tikzpicture}
\end{center}
where the morphism $f$ resolves the non-log canonical singularities --- that is, the pair $( \widetilde{X}_j, \widetilde{R}_j) $ is log canonical --- and $g$ is the birational contraction which yields a stable pair. Under these birational transformations, the open subsets $U_j=X_j\setminus((\cup_{i\neq j}X_i)\cup(\cup_{i=1}^nC_i^{(j)}))$ and $V_j=\mathbb{P}^2\setminus\cup_{i=1}^nH_{i,j}$ are isomorphic. Moreover, given the choice of weights $\mathbf{t}=(1,1,1,\epsilon,\ldots,\epsilon)$, in $\mathbb{P}^2$ we must have that the first three lines $H_{1,j},H_{2,j},H_{3,j}$ are in general linear position. This comes from the theory in \cite[\S\,5.3]{Ale15}: the stable pair $(X,\mathbf{t}C)$ corresponds to a matroid tiling $\cup_j\mathrm{BP}_{V_j}$ of the \emph{$\mathbf{t}$-cut hypersimplex} $\Delta_{\mathbf{t}}(3,n)$ \cite[Definition~4.4.1]{Ale15}, and each matroid polytope $\mathrm{BP}_{V_j}$ has $(1,1,1,0,\ldots,0)$ as a vertex because this is the only integral vertex of $\Delta_{\mathbf{t}}(3,n)$. So, the line arrangements $(\mathbb{P}^2,K_j)$, which correspond to the polytopes $\mathrm{BP}_{V_j}$, have the lines $H_{1,j},H_{2,j},H_{3,j}$ in general linear position.

Let us consider the case $(r,c,m_1)=(0,1,0)$. Here, we have that the non-log canonical center $Z$ has codimension $1$ and it is not contained in a conductor divisor. Moreover, $m_1=0$ means that $Z$ is not contained in $C_1^{(j)},C_2^{(j)},C_3^{(j)}$. 
Then $Z$ is supported in the union of the $(n-3)$ lines $C_4^{(j)},\ldots,C_n^{(j)}$. Since $Z$ is of dimension $1$, then, $Z$ is equal to such a union. Therefore, using the above facts, we have that $K_j$ is supported on four lines which are in general linear position, that is, $H_{1,j},H_{2,j},H_{3,j},H_{4,j}=\ldots=H_{n,j}$. But this line arrangement in $\mathbb{P}^2$ is stable for the weight vector $\mathbf{t}$, so $(X,\mathbf{t}C)\cong(\mathbb{P}^2,K_j)$, which is parametrized by $e\in\mathrm{LM}(\mathbb{P}^2,n)$. This is against our hypotheses.

Finally, let us consider $(r,c,m_1)=(1,2,0)$. In this case, the non-log canonical locus is at the intersection of exactly one irreducible component of the conductor divisor $D_j$ and the $n-3$ divisors $C_4^{(j)},\ldots,C_n^{(j)}$. Because of the isomorphism $U_j\cong V_j$ and the fact that the planes $H_{1,j},H_{2,j},H_{3,j}$ are linearly independent, we have that the arrangement $(\mathbb{P}^2,\sum_{i=1}^3H_{i,j}+\epsilon\sum_{i=4}^nH_{i,j})$ is, up to automorphisms, as in Figure~\ref{fig:line-arrang-in-P2}~(a) or (b). The plane arrangement (a) is excluded because it has a positive dimensional family of automorphisms. So, the plane arrangement is as in (b). This is log canonical for the weight vector $\mathbf{t}$, which implies that $(X,\mathbf{t}C)=(\mathbb{P}^2,K_j)$, which contradicts the assumption that $(X,\mathbf{t}C)$ is not parametrized by $\mathrm{LM}(\mathbb{P}^2,n)$.

At this point, we are almost done. We recall to the 
tenacious reader still with us that the divisor $K_X+\mathbf{nt}C$ may not be $\mathbb{Q}$-Cartier. As discussed at the beginning of \S\,\ref{sec:iso-in-case-of-lines}, this issue can be resolved by considering $\widetilde{X}$ and $\widetilde{C}$ as in Definition~\ref{def:Q-Cartier-modification}. So, we consider the case in which $Z$ intersects the center of the birational modification $\widetilde{X}_j\rightarrow X_j$. By the discussion in Lemma~\ref{lem:Q-Cartier-modification}, we only have the case in which $X_j\cong\mathbb{P}^1\times\mathbb{P}^1$ and $\widetilde{X}_j$ is the blow up of $X_j$ at one of the torus fixed points.

We first argue that $Z$ equals the center of $\widetilde{X}_j\rightarrow X_j$. This is because $Z$ is not contained in any line with weight $\epsilon$; otherwise, the pair \eqref{eq:irr-comp-stable-pair-LM-weights-dim2} would not be log canonical. So, we have that $Z$ is a torus fixed point of $\mathbb{P}^1\times\mathbb{P}^1$. Let $\widetilde{Z}\subseteq \widetilde{X}_j$ be the preimage of $Z$ and let us check log canonicity along $\widetilde{Z}$. Let $V\subseteq\widetilde{Z}$ be the intersection of $r$ irreducible components of the conductor divisor $\widetilde{D}_j$, $m_1$ divisors among $\widetilde{C}_1^{(j)},\widetilde{C}_2^{(j)},\widetilde{C}_3^{(j)}$, and $m_2$ divisors among $\widetilde{C}_4^{(j)},\ldots,\widetilde{C}_n^{(j)}$. Let $c_V$ be the codimension of $V$ in $\widetilde{X}_j$. 
Then, instead of \eqref{eq:non-lc-d-weight-dim2}, we have to verify that
\[
r+(1-\epsilon)m_1+\frac{1+\epsilon}{n-3}m_2\leq c_V.
\]
If $V=\widetilde{Z}$, then the inequality above becomes
\[
0+(1-\epsilon)\cdot1+\frac{1+\epsilon}{n-3}\cdot0\leq 1,
\]
which is true. Now assume that $V\subseteq\widetilde{Z}$ is a torus fixed point. The inequality becomes
\[
1+(1-\epsilon)\cdot1+\frac{1+\epsilon}{n-3}\cdot0\leq 2,
\]
which is also satisfied.
\end{proof}

\begin{figure}
\begin{tikzpicture}
\draw [line width=1.pt] (1.5,4.3301270189221945)-- (-1.5,-0.8660254037844386);
\draw [line width=1.pt] (0.5,4.3301270189221945)-- (3.5,-0.8660254037844388);
\draw [line width=1.pt] (4.,0.)-- (-2.,0.);
\draw [line width=1.pt,dotted] (1.277350098112616,4.424870537968279)-- (0.2886751345948131,1.);
\draw [line width=1.pt,dotted] (0.7226499018873866,4.424870537968278)-- (1.7113248654051874,1.);
\draw (0.6,1.8) node[anchor=north west] {$\ldots$};
\draw [line width=1.pt,dotted] (9.96658749959701,1.988387706244832)-- (7.259393536879127,1.2704457663721431);
\draw [line width=1.pt,dotted] (9.689832914783025,2.4560194185472506)-- (7.753131178959709,0.4234819221240477);
\draw [line width=1.pt] (7.5,4.3301270189221945)-- (10.5,-0.8660254037844363);
\draw [line width=1.pt] (11.,0.)-- (5.,0.);
\draw [line width=1.pt] (5.5,-0.8660254037844386)-- (8.5,4.330127018922196);
\draw (7.442659779254485,1.35) node[anchor=north west] {$\rotatebox{-60}{\dots} $};
\draw [fill=black] (1.,3.4641016151377557) circle (3.5pt);
\draw [fill=black] (9.,1.7320508075688776) circle (3.5pt);
\draw (1.0,-1.5) node {(a)};
\draw (8.0,-1.5) node {(b)};
\end{tikzpicture}
\caption{In the proof of Lemma~\ref{lem:lc-2-dim}, line arrangements $(\mathbb{P}^2,\sum_{i=1}^3H_{i,j}+\epsilon\sum_{i=4}^nH_{i,j})$ corresponding to $(X_j,D_j+\sum_{i=1}^3C_i^{(j)}+\epsilon\sum_{i=4}^nC_i^{(j)})$. The solid lines represent $H_{1,j},H_{2,j},H_{3,j}$ and the dotted lines are $H_{4,j},\ldots,H_{n,j}$.}
\label{fig:line-arrang-in-P2}
\end{figure}
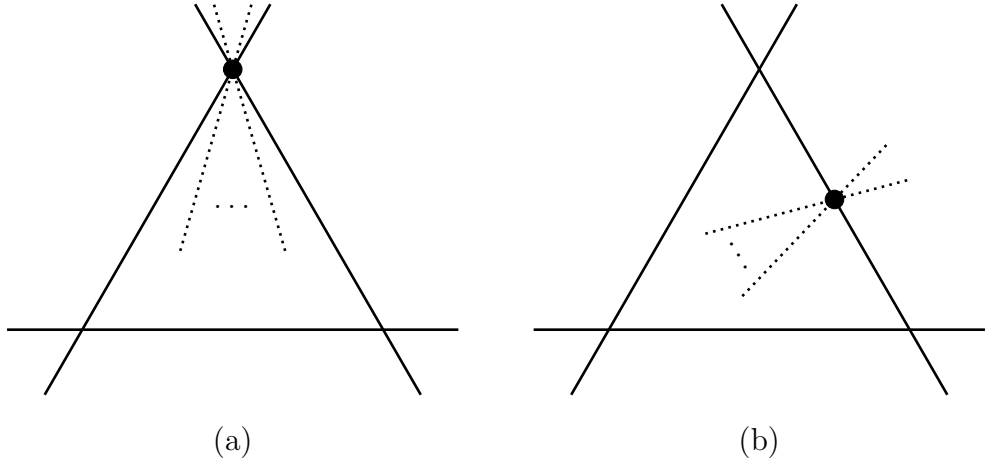


\begin{proof}[Proof of Theorem~\ref{thm:geometric-meaning-blow-up-identity-dim2}]
By the proof of Theorem~\ref{thm:locally-at-e-is-simple-blow-up}, we have a finite birational morphism
\[
\overline{\varphi}\colon\Bl_e\mathrm{LM}(\mathbb{P}^2,n)\rightarrow\mathcal{U}_{\mathbf{nt}}(\mathbb{P}^2,n).
\]
Our goal is to show that this extends to a finite birational morphism 
\[
\Bl_e\overline{\mathrm{LM}}(\mathbb{P}^2,n)\rightarrow\overline{\mathrm{M}}_{\mathbf{nt}}(\mathbb{P}^2,n).
\]
Let $x\in\Bl_e\overline{\mathrm{LM}}(\mathbb{P}^2,n)\setminus\Bl_e\mathrm{LM}(\mathbb{P}^2,n)=\overline{\mathrm{LM}}(\mathbb{P}^2,n)\setminus\mathrm{LM}(\mathbb{P}^2,n)$, which parametrizes a stable pair $(X,\mathbf{t}C)$. Let $R$ be a DVR with maximal ideal $\mathfrak{m}$ and let $K$ be its field of fractions. Let $g\colon\mathrm{Spec}(K)\rightarrow U:=\Bl_e\mathrm{LM}(\mathbb{P}^2,n)\setminus\mathbb{E}_n$ which extends to $\overline{g}\colon\mathrm{Spec}(R)\rightarrow\Bl_e\mathrm{LM}(\mathbb{P}^2,n)$ with $\overline{g}(\mathfrak{m})=x$. Let $f:=\varphi\circ g$ and let $\overline{f}$ be its unique extension to an $R$-point of $\overline{\mathrm{M}}_{\mathbf{nt}}(\mathbb{P}^2,n)$, which by continuity has to lie in $\overline{\mathrm{M}}_{\mathbf{nt}}(\mathbb{P}^2,n)\setminus\mathcal{U}_{\mathbf{nt}}(\mathbb{P}^2,n)$. Let us consider $y:=\overline{f}(\mathfrak{m})$. Then $y$ parametrizes the stable pair $(\widetilde{X},\mathbf{nt}\widetilde{C})$ (see Proposition~\ref{prop:weight-change-from-t-to-nt-on-pair}), regardless of the specific one-parameter family $g$ chosen. This implies that $\varphi$ extends as claimed by \cite[Lemma~3.18]{AET23}. The isomorphism with the normalization follows by the Zariski Main Theorem.
\end{proof}


\section{
The blow up of the Losev--Manin space and the Mori dream property
}
\label{sec:MDS}

We start with some preliminary results. 
Let $X$ be a normal variety with a finitely generated divisor class group. Additionally, suppose that $X$ has only constant invertible global regular functions. If $\Gamma$ is a finitely generated semigroup of Weil divisors on $X$, then we can consider the $\Gamma$-graded ring given by:
\begin{align*}
R(X,\Gamma):=\bigoplus_{D \in \Gamma}H^0( X, \mathcal{O}_X(D)),
\end{align*}
where $\mathcal{O}_X(D)$ is the divisorial sheaf associated to the Weil divisor $D$. If the classes of the Weil divisors generating $\Gamma$ span the vector space  $\mathrm{Cl}(X)_{\mathbb{Q}}$, then the ring $R(X,\Gamma)$ is called \emph{a Cox ring} of $X$. Although the definition of $R(X,\Gamma)$ depends on $\Gamma$, it is well known that whether it is finitely generated as an algebra over the base field is independent of the choice of $\Gamma$. A normal, projective, $\mathbb{Q}$-factorial variety $X$ with a finitely generated class group is called a \emph{Mori dream space} if it has a finitely generated Cox ring. A Cox ring of $X$ can be identified with a Cox ring for each of its small $\mathbb{Q}$-factorial modifications. Hence $X$ is a Mori dream space if and only if any given small $\mathbb{Q}$-factorial modification of $X$ is a Mori dream space. Moreover, the image of a Mori dream space is a Mori dream space, as shown by Okawa \cite{Oka16}.

\begin{theorem}[{\cite[Theorem~1.1]{Oka16}}]
\label{thm:Okawa}
Let $X,Y$ be normal, projective, $\mathbb{Q}$-factorial varieties and let $f\colon X \rightarrow Y$ be a surjective morphism. If $X$ is a Mori dream space, then so is $Y$.
\end{theorem}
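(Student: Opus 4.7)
The plan is to realize $\mathrm{Cox}(Y)$ as the ring of invariants of a diagonalizable group action on $\mathrm{Cox}(X)$, and then invoke Hilbert--Nagata finiteness. The paper's own definition of Mori dream space is precisely the finite generation of the Cox ring (together with finite generation of $\mathrm{Cl}$), so this would directly yield that $Y$ is a Mori dream space.

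First I would reduce to the case where $f$ has connected fibers via Stein factorization $f=g\circ h$, with $h\colon X\to Z$ having connected fibers and $g\colon Z\to Y$ finite surjective. The finite surjective case is handled separately by a classical finite-group invariant argument: after a suitable equivariant refinement, $\mathrm{Cox}(Y)$ sits inside $\mathrm{Cox}(Z)$ as the invariant ring of a finite group acting on a finitely generated $k$-algebra, so Noether finiteness preserves finite generation. Assuming $f$ has connected fibers, normality of $Y$ gives $f_*\mathcal{O}_X=\mathcal{O}_Y$. The surjectivity of $f$ further ensures that $f^*\colon\mathrm{Cl}(Y)\to\mathrm{Cl}(X)$ is injective modulo torsion, so $\mathrm{Cl}(Y)$ is automatically finitely generated since $\mathrm{Cl}(X)$ is.

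The $\mathrm{Cl}(X)$-grading of $\mathrm{Cox}(X)$ corresponds to an action of the diagonalizable group $T_X=\mathrm{Spec}\,k[\mathrm{Cl}(X)]$. The kernel $T_f$ of the dual surjection $T_X\twoheadrightarrow T_Y$, which is itself diagonalizable with character group $\mathrm{Cl}(X)/f^*\mathrm{Cl}(Y)$, acts on $\mathrm{Cox}(X)$ with
\[
\mathrm{Cox}(X)^{T_f}=\bigoplus_{D\in\mathrm{Cl}(Y)}H^0(X,f^*D)=\bigoplus_{D\in\mathrm{Cl}(Y)}H^0(Y,D)=\mathrm{Cox}(Y),
\]
where the middle equality comes from the projection formula combined with $f_*\mathcal{O}_X=\mathcal{O}_Y$. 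Since $T_f$ is linearly reductive, Hilbert--Nagata guarantees that $\mathrm{Cox}(X)^{T_f}$ is a finitely generated $k$-algebra, which completes the proof.

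The main obstacle, in my view, is the finite surjective step after Stein factorization: tracking how $\mathrm{Cl}$-gradings transform under a finite cover requires care with torsion in the class group, and one must verify that the Cox ring of the quotient is literally obtained as invariants rather than merely being commensurable with them. An alternative strategy, closer to Okawa's actual approach, bypasses this by working with the Mori chamber decomposition: the decomposition of $\mathrm{Eff}(X)$ restricts along the injection $f^*\colon\mathrm{N}^1(Y)_{\mathbb{R}}\hookrightarrow\mathrm{N}^1(X)_{\mathbb{R}}$ to a finite rational polyhedral decomposition of $\mathrm{Eff}(Y)$, and each resulting chamber produces an SQM of $Y$ by Stein-factorizing the induced rational map from the corresponding SQM of $X$ to $Y$; semi-ampleness descends because pulling back commutes with taking linear systems. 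Either route ultimately rests on reductivity of (quasi-)tori or on the stability of the Mori dream property under finite base change.
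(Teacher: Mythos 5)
The first thing to say is that the paper does not prove this statement at all: it is quoted verbatim from Okawa \cite{Oka16} and used as a black box in the proof of Theorem~\ref{thm:Q-fact-not-MDS}, so there is no in-paper argument to compare yours against. Judged on its own terms, your connected-fibers step is a legitimate and known strategy (realizing $\bigoplus_{D}H^0(Y,D)$ as the degree restriction of $\mathrm{Cox}(X)$ to $f^*\mathrm{Cl}(Y)$, i.e.\ as invariants of a diagonalizable subgroup, and invoking linear reductivity), granting two points you elide: the pullback $f^*$ must be arranged to be an honest homomorphism on a finitely generated group of integral Weil divisor classes ($\mathbb{Q}$-factoriality of $Y$ only gives you $\mathbb{Q}$-divisor pullback a priori), and the equality $H^0(X,\mathcal{O}_X(f^*D))=H^0(Y,\mathcal{O}_Y(D))$ for Weil divisors requires a projection-formula argument at the level of reflexive sheaves, not just line bundles. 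Both are fixable.

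The genuine gaps are in the reduction. First, the Stein factor $Z$ is normal and projective but need not be $\mathbb{Q}$-factorial, so the intermediate assertion that ``$Z$ is a Mori dream space'' is not even well posed under the definition used here, and the two-step reduction does not literally apply; the standard remedy is to work throughout with multi-section rings $R(\,\cdot\,;D_1,\dots,D_r)$ and their pullbacks to $X$, which is what Okawa does. Second, for the finite surjective part $g\colon Z\to Y$, the ``invariants of a finite group'' argument only applies when $g$ is Galois, and passing to the Galois closure $\widetilde{Z}\to Z\to Y$ destroys your hypothesis, since the Mori dream property does not ascend along finite covers. The correct argument is that each section of $g^*D$ is integral over the image of $R(Y;\mathbf{D})$ (via its characteristic polynomial or norm), so $R(Z;g^*\mathbf{D})$ is a finite module over that subring, and the Artin--Tate lemma then yields finite generation of $R(Y;\mathbf{D})$. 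You correctly identify this step as the main obstacle, but the fix you propose for it does not work as stated.
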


\begin{remark}
It is widely expected by experts that, in general, the moduli space $\overline{\mathrm{LM}}(\mathbb{P}^d,n)$ is not $\mathbb{Q}$-factorial for $d\geq2$. We give an argument for this, communicated to us by Valery Alexeev, in the case of six lines in $\mathbb{P}^2$. The stable hyperplane arrangement $(X,\mathbf{t}D)$ in \cite[Figure~6.1~\#20]{Ale15} with weight vector $\mathbf{t}=(1,\epsilon,1,\epsilon,1,\epsilon)$ is parametrized by a point in $\overline{\mathrm{LM}}(\mathbb{P}^2,6)$. This point is the intersection of two boundary divisors generically parametrizing stable hyperplane arrangements as in \cite[Figure~6.1~\#11]{Ale15} (after appropriately labeling the lines), which correspond to two coarsenings of the subdivision associated to $(X,\mathbf{t}D)$. Since these two divisors intersect exactly at one point, we have that $\overline{\mathrm{LM}}(\mathbb{P}^2,6)$ is not $\mathbb{Q}$-factorial. In particular, also $\Bl_e\overline{\mathrm{LM}}(\mathbb{P}^2,6)$ is not $\mathbb{Q}$-factorial, and hence it cannot be a Mori dream space. However, one can still ask whether any of its \emph{$\mathbb{Q}$-factorializations}, as defined below, is a Mori dream space.
\end{remark}

\begin{definition}[{\cite[Definition~4.1]{Loh13}, see also \cite[Definition~6.25]{KM98}}]
Let $X$ be a normal projective variety. A \emph{$\mathbb{Q}$-factorialization} of $X$ is a proper birational morphism $f\colon Y\rightarrow X$ where $Y$ is a normal projective $\mathbb{Q}$-factorial variety and the exceptional set of $f$ has codimension greater than or equal to two in $Y$.
\end{definition}

We are ready to show the main result of this section. 
\begin{theorem}
\label{thm:Q-fact-not-MDS}
Any $\mathbb{Q}$-factorialization of the blow up $\Bl_e\overline{\mathrm{LM}}(\mathbb{P}^d,n)$ is not a Mori dream space for $n\geq d+9$.
\end{theorem}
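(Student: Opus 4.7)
The plan is to apply Theorem~\ref{thm:Okawa}: if $Z$ is any $\mathbb{Q}$-factorialization of $\Bl_e\overline{\mathrm{LM}}(\mathbb{P}^d,n)$ and we can produce a surjective morphism from $Z$ onto a normal projective $\mathbb{Q}$-factorial variety that is \emph{not} a Mori dream space, then $Z$ itself cannot be a Mori dream space. A natural target is $\Bl_e\overline{\mathrm{LM}}(\mathbb{P}^1,n-d+1)$: since $\overline{\mathrm{LM}}(\mathbb{P}^1,m)$ is a smooth toric variety, this blow up is automatically smooth and $\mathbb{Q}$-factorial, and by \cite{CT15,GK16,HKL18} it fails the Mori dream property for $m\geq 10$. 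Under the hypothesis $n\geq d+9$ we have $m=n-d+1\geq 10$, so it is enough to construct a surjective morphism $Z\to\Bl_e\overline{\mathrm{LM}}(\mathbb{P}^1,n-d+1)$.

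\textbf{Lifting the reduction morphism via Theorem~\ref{thm:mainA}.} As hinted in the introduction, there is a natural surjective morphism $\alpha\colon\overline{\mathrm{LM}}(\mathbb{P}^d,n)\to\overline{\mathrm{LM}}(\mathbb{P}^1,n-d+1)$ with $\alpha(e)=e$, which can be described modularly by restricting a stable pair $(\mathbb{P}^d,\mathbf{t}H)$ to the line $H_1\cap\cdots\cap H_{d-1}$ and recording how the remaining $n-d+1$ hyperplanes cut it (two heavy points from $H_d,H_{d+1}$ and $n-d-1$ light points from $H_{d+2},\ldots,H_n$). Because $\alpha^{-1}(e)\supsetneq\{e\}$, $\alpha$ does not lift to a morphism of blow ups by the universal property. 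To produce the lift, use the modular interpretation given by Theorem~\ref{thm:mainA}: $\overline{\mathrm{M}}_{\mathbf{nt}}(\mathbb{P}^d,n)^\nu$ admits a birational morphism to $\overline{\mathrm{LM}}(\mathbb{P}^d,n)$ that is locally at $e$ the simple blow up, and the $d=1$ case (essentially a classical Hassett wall-crossing) gives an identification $\overline{\mathrm{M}}_{\mathbf{nt}}(\mathbb{P}^1,m)\cong\Bl_e\overline{\mathrm{LM}}(\mathbb{P}^1,m)$. I would then construct a modular morphism
\[
\Phi\colon\overline{\mathrm{M}}_{\mathbf{nt}}(\mathbb{P}^d,n)\longrightarrow\overline{\mathrm{M}}_{\mathbf{nt}}(\mathbb{P}^1,n-d+1)
\]
by sending a stable pair $(X,\mathbf{nt}C)$ to the $\mathbf{nt}$-stable replacement of its restriction to the one-dimensional subscheme cut out by $C_1\cap\cdots\cap C_{d-1}$, with $C_d,C_{d+1}$ providing the two heavy marked points and $C_{d+2},\ldots,C_n$ the $n-d-1$ light ones. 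On the dense open locus $\Phi$ recovers $\alpha$; on the exceptional stratum the chain $Y_1\cup Y_2$ of Definition~\ref{def:description-of-the-new-fibers} restricts to a chain of two $\mathbb{P}^1$s whose stable model under $\mathbf{nt}(1,n-d+1)$ lies in the exceptional divisor of $\Bl_e\overline{\mathrm{LM}}(\mathbb{P}^1,n-d+1)$. Composing $\Phi^\nu$ with the $\mathbb{Q}$-factorialization $Z\to\Bl_e\overline{\mathrm{LM}}(\mathbb{P}^d,n)$ and with the birational morphism $\overline{\mathrm{M}}_{\mathbf{nt}}(\mathbb{P}^d,n)^\nu\to\Bl_e\overline{\mathrm{LM}}(\mathbb{P}^d,n)$ supplied by Theorem~\ref{thm:mainA} (passing to a common $\mathbb{Q}$-factorial modification if needed) yields the desired surjective morphism $Z\to\Bl_e\overline{\mathrm{LM}}(\mathbb{P}^1,n-d+1)$, and Theorem~\ref{thm:Okawa} concludes.

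\textbf{Main obstacle.} The crux is constructing the modular morphism $\Phi$ and verifying that the composition descends to an honest regular morphism out of $Z$ rather than a rational one. One must check that the restriction-and-stable-replacement procedure is well-behaved in families over DVRs --- in the spirit of the valuative arguments used in the proof of Theorem~\ref{thm:locally-at-e-is-simple-blow-up} --- handle the reducible degenerations of Definition~\ref{def:description-of-the-new-fibers} so that the restriction is recognized as a $\mathbf{nt}$-stable pair in dimension $1$, and interact correctly with the non-$\mathbb{Q}$-Cartier modification of Lemma~\ref{lem:Q-Cartier-modification}. A further subtlety, relevant only for $d\geq 3$, is that $\overline{\mathrm{M}}_{\mathbf{nt}}(\mathbb{P}^d,n)^\nu$ is only locally isomorphic to $\Bl_e\overline{\mathrm{LM}}(\mathbb{P}^d,n)$ near $e$, so the birational identifications must be combined with care; fortunately the Mori dream property is invariant under small $\mathbb{Q}$-factorial modifications, so this ambiguity does not affect the conclusion.
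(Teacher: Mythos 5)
Your overall strategy coincides with the paper's: produce a dominant morphism from (a $\mathbb{Q}$-factorialization of) $\Bl_e\overline{\mathrm{LM}}(\mathbb{P}^d,n)$ onto $\Bl_e\overline{\mathrm{LM}}(\mathbb{P}^1,n-d+1)$ by restricting stable pairs to the line cut out by $d-1$ of the heavy hyperplanes, identify the target with a Hassett space $\overline{\mathrm{M}}_{\mathbf{nt}}(\mathbb{P}^1,n-d+1)$ (the paper's Lemma~\ref{lem:blow-up-of-Losev-Manin-for-points-in-P1}), invoke the failure of the Mori dream property for $\Bl_e\overline{\mathrm{LM}}(\mathbb{P}^1,m)$, $m\geq10$, from \cite{CT15,GK16,HKL18}, and conclude with Theorem~\ref{thm:Okawa}. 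The choice of which $d-1$ heavy divisors to intersect is immaterial (the paper relabels so they sit at the end of the weight vector).

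There is, however, a concrete gap in your route through a \emph{globally} defined modular morphism $\Phi\colon\overline{\mathrm{M}}_{\mathbf{nt}}(\mathbb{P}^d,n)\to\overline{\mathrm{M}}_{\mathbf{nt}}(\mathbb{P}^1,n-d+1)$. For $d\geq3$ the identification of $\overline{\mathrm{M}}_{\mathbf{nt}}(\mathbb{P}^d,n)^\nu$ with $\Bl_e\overline{\mathrm{LM}}(\mathbb{P}^d,n)$ is only established over $\mathrm{LM}(\mathbb{P}^d,n)$ (Theorem~\ref{thm:locally-at-e-is-simple-blow-up}~(ii)); over the boundary the map $f$ of part~(i) is merely birational and is not known to be an isomorphism in codimension one. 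Consequently your composition $Z\to\Bl_e\overline{\mathrm{LM}}(\mathbb{P}^d,n)\dashrightarrow\overline{\mathrm{M}}_{\mathbf{nt}}(\mathbb{P}^d,n)^\nu\to\overline{\mathrm{M}}_{\mathbf{nt}}(\mathbb{P}^1,n-d+1)$ is only a rational map out of $Z$, and Theorem~\ref{thm:Okawa} requires an honest surjective morphism. Your proposed fix --- passing to a common $\mathbb{Q}$-factorial modification $W$ --- does not repair this: $W$ need not be a \emph{small} modification of $Z$, and Okawa's theorem runs in the wrong direction (non-Mori-dreamness of $W$ says nothing about $Z$ when $W\to Z$ is a nontrivial birational contraction). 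The paper sidesteps this entirely: it defines $\psi$ on $\Bl_e\overline{\mathrm{LM}}(\mathbb{P}^d,n)$ away from $\mathbb{E}_n$ via the restriction morphism $\alpha$ on $\overline{\mathrm{LM}}(\mathbb{P}^d,n)$ itself, and then extends $\psi$ over $\mathbb{E}_n$ by a valuative argument (in the style of \cite[Lemma~3.18]{AET23}) that only uses the \emph{local} isomorphism $\Bl_e\mathrm{LM}(\mathbb{P}^d,n)\cong\mathcal{U}_{\mathbf{nt}}(\mathbb{P}^d,n)^\nu$ near the exceptional divisor, where the family of Definition~\ref{def:description-of-the-new-fibers} restricts to a visibly $\mathbf{a}$-stable two-component pointed curve. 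Restructuring your argument this way --- extend only across $\mathbb{E}_n$, never identify the two spaces globally --- closes the gap; the rest of your proposal is sound.
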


\begin{proof}
We start by showing that there exists a dominant morphism 
\[
\Bl_e\overline{\mathrm{LM}}(\mathbb{P}^d,n)\rightarrow\Bl_e\overline{\mathrm{LM}}(\mathbb{P}^1,n-d+1).
\]
For convenience of notation, in this proof we change the usual order of the weights by moving the weights labeled by $3,\ldots,d+1$ at the end of the weight vector. So, we consider $\mathbf{t}=(1,1,\epsilon,\ldots,\epsilon,1,\ldots,1)\in\mathbb{Q}^n$ instead of the usual $(1,\ldots,1,\epsilon,\ldots,\epsilon)\in\mathbb{Q}^n$.

Consider the family of stable pairs on $\overline{\mathrm{LM}}(\mathbb{P}^d,n)$ from Theorem~\ref{thm:family-over-moduli-weighted-hyperplane-arrangements}:
\[
\left(\mathcal{X},\mathcal{B}_1+\mathcal{B}_2+\epsilon\left(\sum_{i=3}^{n-d+1}\mathcal{B}_i\right)+\mathcal{B}_{n-d+2}+\ldots+\mathcal{B}_n\right)\rightarrow\overline{\mathrm{LM}}(\mathbb{P}^d,n).
\]
Let us define $\mathcal{B}=\mathcal{B}_{n-d+2}\cap\ldots\cap\mathcal{B}_n$ and $\mathcal{P}_i=\mathcal{B}_i\cap\mathcal{B}$ for $i=1,\ldots,n-d+1$. Then, we can define by adjunction the following family on $\overline{\mathrm{LM}}(\mathbb{P}^d,n)$ of stable weighted rational curves:
\begin{equation}
\label{eq:family-of-curves-for-LM}
\left(\mathcal{B},\mathcal{B}_1|_{\mathcal{B}}+\mathcal{B}_2|_{\mathcal{B}}+\epsilon\left(\sum_{i=3}^{n-d+1}\mathcal{B}_i|_{\mathcal{B}}\right)\right)\rightarrow\overline{\mathrm{LM}}(\mathbb{P}^d,n).
\end{equation}
Therefore, there is an induced morphism $\overline{\mathrm{LM}}(\mathbb{P}^d,n)\rightarrow\overline{\mathrm{LM}}(\mathbb{P}^1,n-d+1)$. This morphism is dominant because every isomorphism class of pair $(\mathbb{P}^1,P_1+P_2+\epsilon\sum_{i=3}^{n-d+1}P_i)$ with distinct points $P_i$ can be realized in the image of the morphism \eqref{eq:family-of-curves-for-LM}.
This can be readily seen because for any configuration of points $P_1,\ldots,P_{n-d+1}$ on a line in $\mathbb{P}^d$ can be cut out by appropriate hyperplanes $H_1,\ldots,H_{n-d+1}$. The claimed morphism $\Bl_e\overline{\mathrm{LM}}(\mathbb{P}^d,n)\rightarrow\Bl_e\overline{\mathrm{LM}}(\mathbb{P}^1,n-d+1)$ will be obtained by extending the rational map at the top of the following commutative diagram:
\begin{center}
\begin{tikzpicture}[>=angle 90]
\matrix(a)[matrix of math nodes,
row sep=2em, column sep=2em,
text height=1.5ex, text depth=0.25ex]
{\Bl_e\overline{\mathrm{LM}}(\mathbb{P}^d,n)&\Bl_e\overline{\mathrm{LM}}(\mathbb{P}^1,n-d+1)\\
\overline{\mathrm{LM}}(\mathbb{P}^d,n)&\overline{\mathrm{LM}}(\mathbb{P}^1,n-d+1).\\};
\path[dashed,->] (a-1-1) edge node[above]{$\psi$}(a-1-2);
\path[->] (a-1-1) edge node[]{}(a-2-1);
\path[->] (a-2-1) edge node[]{}(a-2-2);
\path[->] (a-1-2) edge node[]{}(a-2-2);
\end{tikzpicture}
\end{center}
By construction, $\psi$ is dominant and it is defined away from the exceptional divisor $\mathbb{E}_n\subseteq\Bl_e\overline{\mathrm{LM}}(\mathbb{P}^d,n)$. We prove that $\psi$ extends over $\mathbb{E}_n$ by using the geometric interpretation of the two vertical blow ups in a neighborhood of the exceptional divisors.

Consider the weights given by
\begin{align*}
\mathbf{nt}&=\left(1-\epsilon,1-\epsilon,\frac{1+\epsilon}{n-d-1},\ldots,\frac{1+\epsilon}{n-d-1},1-\epsilon,\ldots,1-\epsilon\right)\in\mathbb{Q}^n,\\
\mathbf{a}&=\left(1-\epsilon,1-\epsilon,\frac{1+\epsilon}{(n-d+1)-2},\ldots,\frac{1+\epsilon}{(n-d+1)-2}\right)\in\mathbb{Q}^{n-d+1}.
\end{align*}
(We observe that $\mathbf{a}=\mathbf{nt}(1,n-d+1)$ according to Definition~\ref{def:weight_nt}.) By Theorem~\ref{thm:locally-at-e-is-simple-blow-up} and Lemma~\ref{lem:blow-up-of-Losev-Manin-for-points-in-P1} (after the end of this proof) we have the following isomorphisms:
\[
\Bl_e\mathrm{LM}(\mathbb{P}^d,n)\cong\mathcal{U}_{\mathbf{nt}}(\mathbb{P}^d,n)\subseteq\overline{\mathrm{M}}_{\mathbf{nt}}(\mathbb{P}^d,n)~\textrm{and}~\Bl_e\overline{\mathrm{LM}}(\mathbb{P}^1,n-1)\cong\overline{\mathrm{M}}_{\mathbf{a}}(\mathbb{P}^1,n-d+1).
\]
Let $x\in\mathbb{E}_n$. Let $R$ be a DVR with maximal ideal $\mathfrak{m}$ and $K$ its field of fractions. Let $g\colon\mathrm{Spec}(K)\rightarrow(\Bl_e\overline{\mathrm{LM}}(\mathbb{P}^d,n))\setminus\mathbb{E}_n$ such that its extension $\overline{g}\colon\mathrm{Spec}(R)\rightarrow\overline{\mathrm{M}}_{\mathbf{nt}}(\mathbb{P}^d,n)$ satisfies $g(\mathfrak{m})=x$. Let $f:=\psi\circ g$ and let $\overline{f}$ be its unique extension to an $R$-point of $\overline{\mathrm{M}}_{\mathbf{a}}(\mathbb{P}^1,n-d+1)$. Let us show that $y:=\overline{f}(\mathfrak{m})$ only depends on $x$ and not on the choice of $g$. This guarantees the extension of $\psi$ to $\Bl_e\overline{\mathrm{LM}}(\mathbb{P}^d,n)$ by \cite[Lemma~3.18]{AET23}.

Consider the family of weighted pointed stable rational curves 
on $\mathcal{U}_{\mathbf{nt}}(\mathbb{P}^d,n)$, which we denote by
\[
\left(\mathcal{S},(1-\epsilon)\mathcal{D}_1+(1-\epsilon)\mathcal{D}_2+\frac{1+\epsilon}{n-d+1}\left(\sum_{i=3}^{n-d+1}\mathcal{D}_i\right)+(1-\epsilon)\mathcal{D}_{n-d+2}+\ldots+(1-\epsilon)\mathcal{D}_n\right)
\]
Pulling it back along $\overline{g}$, we obtain a one-parameter family over $\mathrm{Spec}(R)$ whose central fiber
\[
\left(S,(1-\epsilon)D_1+(1-\epsilon)D_2+\frac{1+\epsilon}{n-d-1}\left(\sum_{i=3}^{n-d+1}D_i\right)+(1-\epsilon)D_{n-d+2}+\ldots+(1-\epsilon)D_n\right)
\]
gives the stable pair parametrized by $x$. This is as described in Definition~\ref{def:description-of-the-new-fibers} (see also Figure~\ref{fig:stable-line-arrangement-after-blow-up} for $d=2$). On the other hand, we can define the following family:
\[
\left(\mathcal{D},(1-\epsilon)\mathcal{D}_1|_{\mathcal{D}}+(1-\epsilon)\mathcal{D}_2|_{\mathcal{D}}+\frac{1+\epsilon}{(n-d+1)-2}\left(\sum_{i=3}^{n-d+1}\mathcal{D}_i|_{\mathcal{D}}\right)\right)\rightarrow\mathrm{Spec}(R),
\]
where $\mathcal{D}:=\mathcal{D}_{n-d+2}\cap\ldots\cap\mathcal{D}_n$. Its central fiber
\[
\left(D,(1-\epsilon)D_1|_{D}+(1-\epsilon)D_2|_{D}+\frac{1+\epsilon}{(n-d+1)-2}\sum_{i=3}^{n-d+1}D_i|_{D}\right)
\]
is also stable because $D$ consists of the transverse gluing of two copies of $\mathbb{P}^1$ at one point, one twig has two marked points $D_1|_{D},D_2|_{D}$ and the other twig has the remaining points $D_3|_{D},\ldots,D_{n-d+1}|_{D}$, which is stable. This is the stable pair parametrized by $y$. We can see that this only depends on $x$, and not on the particular choice of $g$.

By the above argument, we have a dominant morphism
\[
\Bl_e\overline{\mathrm{LM}}(\mathbb{P}^d,n)\rightarrow\Bl_e\overline{\mathrm{LM}}(\mathbb{P}^1,n-d+1).
\]
We know from \cite{HKL18} that $\Bl_e\overline{\mathrm{LM}}(\mathbb{P}^1,n-d+1)$ is not a Mori dream space for $n-d+1\geq10$ by \cite{HKL18} (this improved the previous lower bounds in \cite{CT15,GK16}). Therefore, we can conclude that any $\mathbb{Q}$-factorialization $Y\rightarrow\Bl_e\overline{\mathrm{LM}}(\mathbb{P}^d,n)$ is not a Mori dream space for $n\geq d+9$ by Theorem~\ref{thm:Okawa}.
\end{proof}

The next lemma, which is known to experts, is used in the proof of Theorem~\ref{thm:Q-fact-not-MDS}. It may be seen as the analog of Theorem~\ref{thm:geometric-meaning-blow-up-identity-dim2} for the case of $\mathbb{P}^1$, and we discuss its proof for completeness. 

\begin{lemma}
\label{lem:blow-up-of-Losev-Manin-for-points-in-P1}
Let $\mathbf{nt}=\left(1-\epsilon,1-\epsilon,\frac{1+\epsilon}{n-2},\ldots,\frac{1+\epsilon}{n-2}\right)\in\mathbb{Q}^n$. Then, the Hassett moduli space $\overline{\mathrm{M}}_{\mathbf{nt}}(\mathbb{P}^1,n)$ is isomorphic to $\Bl_e\overline{\mathrm{LM}}(\mathbb{P}^1,n)$.
\end{lemma}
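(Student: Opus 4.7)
The plan is to mimic the proof of Theorem~\ref{thm:locally-at-e-is-simple-blow-up} in the $d=1$ setting, where the $\mathbb{Q}$-Cartier and normalization subtleties of \S\,\ref{sec:iso-in-case-of-lines} disappear, because both the Hassett moduli space $\overline{\mathrm{M}}_{\mathbf{nt}}(\mathbb{P}^1,n)=\overline{\mathrm{M}}_{0,\mathbf{nt}}$ (smooth by \cite{Has03}) and the toric variety $\overline{\mathrm{LM}}(\mathbb{P}^1,n)$ are smooth. First, I would apply the chain of auxiliary weight vectors $\mathbf{a},\widehat{\mathbf{w}},\mathbf{h}$ from the proof of Theorem~\ref{thm:locally-at-e-is-simple-blow-up}(i) verbatim with $d=1$, invoking Theorem~\ref{thm:wall_crossing_valery}, to obtain a birational morphism
\[
f\colon\overline{\mathrm{M}}_{\mathbf{nt}}(\mathbb{P}^1,n)\rightarrow\overline{\mathrm{LM}}(\mathbb{P}^1,n).
\]
The analysis of Lemma~\ref{lemma:Wall_t_nt} identifies the unique wall crossed between $\mathbf{t}$ and $\mathbf{nt}$ as $x_3+\ldots+x_n=1$.

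Next, I would verify that $f$ is an isomorphism over $\overline{\mathrm{LM}}(\mathbb{P}^1,n)\setminus\{e\}$. For points in $\mathrm{LM}(\mathbb{P}^1,n)\setminus\{e\}$ this is exactly Lemma~\ref{lem:wall-crossing-in-the-interior-away-from-e} applied with $d=1$. For a boundary point, the corresponding stable pair $(X,\mathbf{t}P)$ has $X$ a chain of $\mathbb{P}^1$'s with at least two components, each containing at least one light point; hence every component carries at most $n-3$ light points, so the restricted light weight is bounded by $\frac{(n-3)(1+\epsilon)}{n-2}<1$ for $\epsilon\ll1$. A direct per-component log canonicity and ampleness check (which in dimension one reduces to verifying positive degree on each $\mathbb{P}^1$ with at most two heavy marks, at most two nodes, and the relevant light marks) shows that $(X,\mathbf{nt}P)$ is still a stable pair, so $f$ is a bijection off $f^{-1}(e)$; combined with the smoothness of both source and target, this upgrades to an isomorphism there.

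Finally, to identify $f$ with the blow-up at $e$, I would invoke Theorem~\ref{thm:locally-at-e-is-simple-blow-up}(ii) to obtain $\Bl_e\mathrm{LM}(\mathbb{P}^1,n)\cong\mathcal{U}_{\mathbf{nt}}(\mathbb{P}^1,n)^\nu=\mathcal{U}_{\mathbf{nt}}(\mathbb{P}^1,n)$, where the normalization is superfluous by smoothness of $\overline{\mathrm{M}}_{0,\mathbf{nt}}$. Consequently $f^{-1}\mathcal{I}_e\cdot\mathcal{O}_{\overline{\mathrm{M}}_{\mathbf{nt}}(\mathbb{P}^1,n)}$ is invertible, and by the universal property of the blow-up $f$ factors uniquely as
\[
\overline{\mathrm{M}}_{\mathbf{nt}}(\mathbb{P}^1,n)\xrightarrow{g}\Bl_e\overline{\mathrm{LM}}(\mathbb{P}^1,n)\rightarrow\overline{\mathrm{LM}}(\mathbb{P}^1,n).
\]
The morphism $g$ is birational, an isomorphism away from the exceptional divisor (by the previous step), and an isomorphism in a neighborhood of the exceptional divisor (by the local identification of Theorem~\ref{thm:locally-at-e-is-simple-blow-up}(ii)), so Zariski's Main Theorem yields that $g$ is an isomorphism.

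The only real obstacle I anticipate is gluing the local blow-up behavior at $e$ with the trivial behavior over the boundary chains; but the universal property of the blow-up handles this cleanly, and no new input beyond the machinery of \S\,\ref{sec:wall-crossing} is required for the $d=1$ specialization.
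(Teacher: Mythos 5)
Your proposal is essentially correct, but it takes a genuinely different and much heavier route than the paper. The paper's proof is a short, purely one-dimensional argument: it observes that $\overline{\mathrm{LM}}(\mathbb{P}^1,n)=\overline{\mathrm{M}}_{\mathbf{b}}(\mathbb{P}^1,n)$ for $\mathbf{b}=(1-\epsilon,1-\epsilon,2\epsilon,\ldots,2\epsilon)$, and then invokes Hassett's explicit description of reduction morphisms between weighted pointed curve spaces (\cite[Remark~4.6]{Has03}) to identify $\rho_{\mathbf{b},\mathbf{nt}}$ directly as the blow up of $\overline{\mathrm{M}}_{\mathbf{b}}(\mathbb{P}^1,n)$ along the image of the single boundary divisor $D_{I,J}$ with $I=\{3,\ldots,n\}$, which is exactly the point $e$; only the crossing of the one threshold $x_3+\ldots+x_n=1$ matters, and no stable-replacement or boundary analysis is needed. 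Your argument instead re-runs the machinery of \S\,\ref{sec:wall-crossing} and \S\,\ref{sec:iso-in-case-of-lines} in the $d=1$ case: wall-crossing chain, per-component stability check on the boundary chains, universal property of the blow up, and Zariski's Main Theorem. This works and has the virtue of being uniform with the proof of Theorem~\ref{thm:geometric-meaning-blow-up-identity-dim2}, but it is considerably longer, and it quietly relies on Theorem~\ref{thm:locally-at-e-is-simple-blow-up} and Definition~\ref{def:description-of-the-new-fibers} making literal sense for $d=1$, where some of the geometry degenerates ($Y_2=\Bl_p\mathbb{P}^1\cong\mathbb{P}^1$, the exceptional divisor $D_2\cong\mathbb{P}^0$ is a point, and the intersection computations with the curves $e,f,s$ in Lemma~\ref{lem:stability-pair-for-weights-nt} become vacuous); if you go this route you should at least remark that these degenerate cases are checked directly, which for curves is immediate. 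Your boundary stability check (each component of a boundary chain carries between $1$ and $n-3$ light points, so the restricted light weight stays below $1$ and all degrees remain positive) is correct, and the final descent via the universal property of the blow up plus the covering of $\Bl_e\overline{\mathrm{LM}}(\mathbb{P}^1,n)$ by $\Bl_e\mathrm{LM}(\mathbb{P}^1,n)$ and the locus over $\overline{\mathrm{LM}}(\mathbb{P}^1,n)\setminus\{e\}$ does close the argument. In short: correct, but you could have obtained the statement in two lines from Hassett's results, which is what the paper does.
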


\begin{proof}
Let $\mathbf{b}=(1-\epsilon,1-\epsilon,2\epsilon,\ldots,2\epsilon)\in\mathbb{Q}^n$. First, we observe that $\overline{\mathrm{LM}}(\mathbb{P}^1,n)=\overline{\mathrm{M}}_{\mathbf{b}}(\mathbb{P}^1,n)$ because the allowed incidences among the $n$ points are the same. Consider the reduction morphism $\rho_{\mathbf{b},\mathbf{nt}}\colon\overline{\mathrm{M}}_{\mathbf{nt}}(\mathbb{P}^1,n)\rightarrow\overline{\mathrm{M}}_{\mathbf{b}}(\mathbb{P}^1,n)$. Let $I=\{3,\ldots,n\}$ and $J=\{1,2\}$. Let $D_{I,J}\subseteq\overline{\mathrm{M}}_{\mathbf{nt}}(\mathbb{P}^1,n)$ be the boundary divisor generically paramentrizing the transverse gluing of two copies of $\mathbb{P}^1$ at one point, where one copy supports the points $\{P_3,\ldots,P_n\}$ while the other one supports the points $\{P_1,P_2\}$. The sum of the weights in $\mathbf{nt}$ indexed by $I$ sum up to $1+\epsilon>1$, but any proper subset of indices in $I$ has sum of the corresponding weights at most one. Then, by \cite[Remark~4.6]{Has03} we have that $\rho_{\mathbf{b},\mathbf{nt}}$ is the blow up of $\overline{\mathrm{M}}_{\mathbf{b}}(\mathbb{P}^1,n)$ along the image of the boundary divisor $D_{I,J}$, which is the point parametrizing the isorphism class of $(\mathbb{P}^1,\mathbf{b}P)$ with $P_3=\ldots=P_n$. Then, under the identification $\overline{\mathrm{LM}}(\mathbb{P}^1,n)=\overline{\mathrm{M}}_{\mathbf{b}}(\mathbb{P}^1,n)$, this stable pair is parametrized by the point $e$.
\end{proof}



\end{document}